\definecolor{my_color}{rgb}{0,0.5,0.5}
\definecolor{MIXT}{rgb}{0.8,0.5,0.2}
\definecolor{darkblue}{rgb}{0,0.1,0.8}
\definecolor{redi}{rgb}{0.7,0,0}
\numberwithin{equation}{section}
\font\tencyr=wncyr10 
\font\tencyi=wncyi10 
\font\tencysc=wncysc10 
\def\rus{\tencyr\cyracc}
\def\rusi{\tencyi\cyracc}
\def\rusc{\tencysc\cyracc}
\newcounter{rmke}
\numberwithin{rmke}{section}
\newtheorem{thm}{Theorem}[section]
\newtheorem{lm}[thm]{Lemma}
\newtheorem{cl}[thm]{Corollary}
\newtheorem{prop}[thm]{Proposition}
\theoremstyle{remark}
\newtheorem{rmk}[thm]{Remark}
\theoremstyle{definition}
\newtheorem{ex}[thm]{Example} 
\theoremstyle{plain}
\newcommand {\g}{{\mathfrak g}}
\newcommand {\h}{{\mathfrak h}}
\newcommand {\q}{{\mathfrak q}}
\newcommand {\es}{{\mathfrak s}}
\newcommand {\te}{{\mathfrak t}}
\newcommand{\gt}{\mathfrak}
\newcommand{\SL}{{\rm SL}}
\newcommand{\GL}{{\rm GL}}
\newcommand{\SO}{{\rm SO}}
\newcommand{\Spin}{{\rm Spin}}
\newcommand{\Sp}{{\rm Sp}}
\newcommand {\eus}{\EuScript}
\newcommand {\gS}{{\eus S }}
\newcommand {\vp}{\varphi}
\newcommand {\cs}{{\mathcal S}}
\newcommand{\Z}{{\mathbb Z}}
\newcommand {\md}{/\!\!/}
\newcommand {\ads}{{\mathrm{ad}^*}}
\newcommand {\Ad}{{\mathrm{Ad\,}}}
\newcommand {\Ann}{{\mathrm{Ann\,}}}
\newcommand {\ind}{{\mathrm{ind\,}}}
\newcommand {\Lie}{{\mathsf{Lie\,}}}
\newcommand {\rk}{{\mathrm{rk\,}}}
\newcommand {\spe}{{\mathsf{Spec\,}}}
\newcommand {\tr}{{\mathrm{tr\,}}}
\newcommand {\trdeg}{{\rm{tr.deg\,}}}
\newcommand {\tri}{\mathfrak{sl}_2}
\newcommand {\GR}[2]{{\textrm{{\sf\bfseries #1}}}_{#2}}
\newcommand {\un}{\underline}
\newcommand {\beq}{\begin{equation}}
\newcommand {\eeq}{\end{equation}}
\renewcommand{\le}{\leqslant}
\renewcommand{\ge}{\geqslant}
\newcommand{\brr}{\tilde{\boldsymbol{r}}}
\newcommand {\bbk}{\Bbbk}
\begin{document}
\setlength{\parskip}{3pt plus 2pt minus 0pt}
\hfill { {\color{blue}\scriptsize February 20, 2018}}
\vskip1ex

\title%
[Semi-direct products with free algebras of symmetric invariants]{Semi-direct products involving $\Sp_{2n}$ or $\Spin_n$\\ with free algebras of symmetric invariants}
\author[D.\,Panyushev]{Dmitri I. Panyushev}
\address[D.\,Panyushev]{Institute for Information Transmission Problems of the R.A.S, 
Bolshoi Karetnyi per. 19, Moscow 127051, Russia}
\email{panyushev@iitp.ru}
\author[O.\,Yakimova]{Oksana S.~Yakimova}
\address[O.\,Yakimova]{Universit\"at zu K\"oln,
Mathematisches Institut, Weyertal 86-90, 50931 K\"oln, Deutschland}
\email{yakimova.oksana@uni-koeln.de}
\thanks{The first author is partially supported by the RFBR grant {\rus N0} 16-01-00818. 
The second author is supported by a Heisenberg fellowship of the DFG}
\keywords{Classical Lie algebras, coadjoint representation, symmetric invariants}
\subjclass[2010]{14L30, 17B08, 17B20, 22E46}
\dedicatory{Dedicated to A.\,Joseph on the occasion of his 75th birthday}
\begin{abstract}
This is a part of an ongoing project, the goal of which is to classify all semi-direct products 
$\es=\g{\ltimes}V$ such that  $\g$ is a simple Lie algebra, $V$ is a $\g$-module, and  $\es$ has a free
algebra of symmetric invariants. 
In this paper, we obtain such a classification for the representations of the orthogonal and symplectic
algebras.
\end{abstract}
\maketitle

\section*{Introduction}

\noindent
Let $\bbk$ be a field with $\text{char\,}\bbk=0$. Let $S$ be an algebraic group defined over $\bbk$ with 
$\es=\Lie S$. The invariants of $S$ in the symmetric algebra $\gS(\es)=\bbk[\es^*]$ of $\es$ 
(=\,the symmetric invariants of $\es$ or of $S$) are denoted by $\bbk[\es^*]^S$ or $\gS(\es)^S$. If $S$ is 
connected, then we also  write $\bbk[\es^*]^\es$ or $\gS(\es)^\es$ for them.    

Let $\g$ be a reductive Lie algebra. Symmetric invariants of $\g$ over $\bar\bbk$ belong to the classical 
area of Representation Theory and Invariant Theory, where the most striking and influential results were 
obtained by Chevalley and Kostant in the 50s and 60s. Then pioneering insights of Kostant and Joseph revealed that the symmetric invariants of certain
non-reductive subalgebras of $\g$ can explicitly be described and that they are very helpful for understanding representations of $\g$ itself, see~\cite{jos77, jos11, kost}. 
This have opened a brave new world, full of adventures 
and hidden treasures. 
Hopefully, we have found (and presented here) some of them. 

Although the study of $\gS(\es)^S$ is hopeless in general, there are several classes of non-reductive 
algebras that are still tractable. One of them is obtained via a semi-direct product construction from finite-
dimensional representations of reductive groups, which is the main topic of this article,
see Section~\ref{sect:kos-th} below. Another interesting class of non-reductive algebras consists of 
truncated biparabolic  subalgebras~\cite{J}, see also ~\cite{FKs} and references therein. Yet another
class consists of the centralisers of nilpotent elements of $\g$, see~\cite{ppy}.
Remarkably, some truncated bi-parabolic subalgebras or centralisers occur also as semi-direct products. \\
In~\cite{Y}, the following problem has been 
proposed:  
\\ \indent
{\sl To classify the representations $V$ of {\bfseries simple}\/ algebraic groups $G$ with $\Lie G=\g$ such that 
the ring of symmetric invariants of the semi-direct product\/ $\es=\g\ltimes V$ is polynomial}. 
\\ \indent
It is easily seen that if $\es$ has this property, then $\bbk[V^*]^G$ is also a polynomial ring. (But not vice versa!) 
Therefore, the suitable representations $(G,V)$ are contained in
the list of ``coregular representations'' of simple algebraic groups, 
see~\cite{gerry1,ag79}. If a generic stabiliser for $(G,V)$ is trivial, then 
$\bbk[\es^*]^S\simeq \bbk[V^*]^G$. Therefore, it suffices to handle only ``coregular representations'' 
with non-trivial generic stabilisers. The latter can be determined with the help of Elashvili's 
tables~\cite{alela1}.
As it should have been expected, type ${\sf A}$  is the 
most difficult case. The solution for just one particular item, $V=m(\mathbb C^n)^*{\oplus}k\mathbb C^n$ 
for $G=\SL_n$, occupies the whole paper~\cite{Y}. 
This certainly means that obtaining classification in  the $\SL_n$-case  
requires considerable effort.  Although the results of~\cite{Y} are formulated over $\mathbb C$, we 
notice that they are actually valid over an arbitrary field of characteristic zero. 
The case of exceptional groups $G$ bas been considered in~\cite{exc}. The next logical step is to look at 
the symplectic and orthogonal groups $G$, which is done in this paper. 
To a great extent, our classification results rely on the theory developed by the second author in \cite{Y16}.

Let us give a brief outline of the paper. In Sections~\ref{sect:prelim}, we gather some properties of the 
arbitrary coadjoint representations, whereas in Section~\ref{sect:kos-th}, we stick to the coadjoint 
representations of semi-direct products and describe our classification techniques. After a brief interlude 
in Section~\ref{sect:1-A} devoted to an example in type ${\sf A}$, we dwell upon the classification of the 
suitable representations $V$ of the orthogonal (Section~\ref{sect:so}) and symplectic (Section~\ref{sect:sp}) groups.
Our results are summarised in Theorem~\ref{thm:main} and Tables~\ref{table:ort1},\,\ref{table-sp}. 
We are taking a somewhat unusual approach  towards a classification and  trying to present the 
essential ideas for {\bf all} pairs $(G,V)$ under consideration. Many pairs can be handled using general 
theorems presented in Section~\ref{sect:kos-th}, but some others require lengthy elaborated ad hoc 
considerations, see e.g. Theorem~\ref{th-D6-25}.
It appears {\sl a posteriori\/} that, for  {\bf all} representations $V$ of $G=\Sp_{2n}$ with polynomial ring 
$\bbk[V^*]^{\Sp_{2n}}$, the algebra of symmetric invariants $\gS(\es)^S$ is also polynomial. In most of 
the $\gt{sp}_{2n}$-cases, we explicitly describe the basic invariants. 
There is an interesting connection with the invariants of certain centralisers. 
In particular, if $V=\bbk^{2n}$ is the standard (defining) representations of $\Sp_{2n}$, then there is 
a kind of matryoshka-like structure between the  invariants of the semi-direct product and  
the symmetric invariants of the centraliser of the minimal nilpotent orbit in $\mathfrak{sp}_{2n-2}$.

{\sl \un{ Notation}.}
Let an algebraic group $Q$ act on an irreducible affine variety $X$. Then   $\bbk[X]^Q$ 
stands for the algebra of  $Q$-invariant regular functions on $X$ and $\bbk(X)^Q$
is the field of $Q$-invariant rational functions. If $\bbk[X]^Q$ is finitely generated, then 
$X\md Q:=\spe \bbk[X]^Q$. 
Whenever $\bbk[X]^Q$ is a graded polynomial ring, 
the elements of any set of algebraically independent homogeneous generators 
will be referred to as {\it basic invariants\/}. If $V$ is a $Q$-module and $v\in V$, then 
$\q_v=\{\xi\in\q\mid \xi{\cdot}v=0\}$ is the {\it stabiliser\/} of $v$ in $\q$ and 
$Q_v=\{g\in Q\mid g{\cdot}v=v\}$ is the {\it isotropy group\/}  of $v$ in $Q$.

Let $X$ be an irreducible variety (e.g. a vector space). 
We say that a property holds for ``generic $x\in X$" if  that property holds for all 
points of an open subset of $X$.  
An open subset is said to be {\it big}, if its complement does not contain divisors. 

Write $\gt{heis}_n$, $n\ge 0$, for the Heisenberg Lie algebra of dimension $2n{+}1$.

\section{Preliminaries on the coadjoint representations}
\label{sect:prelim}

\noindent
Let $Q$ be a connected algebraic group and $\q=\Lie Q$ . The {\it index\/}  of $\q$ is  
\[
\ind\q=\min_{\gamma\in\q^*} \dim\q_\gamma, 
\] 
where $\q_\gamma$ is the stabiliser of $\gamma$ in $\q$. 
In view of Rosenlicht's theorem~\cite[\S\,2.3]{VP}, $\ind\q={\rm tr.deg}\,\bbk(\q^*)^Q$.
If $\ind\q=0$,  then $\bbk[\q^*]^Q=\bbk$.
For a reductive $\g$, one has $\ind\gt g=\rk\gt g$. In this case, $(\dim\gt g+\rk\gt g)/2$ is the dimension 
of a Borel subalgebra of $\g$. For an arbitrary $\q$, set $\boldsymbol{b}(\q):=(\ind\q+\dim\q)/2$. 

One defines the  {\it singular set\/} $\q^*_{\sf sing}$ of $\q^*$ by 
$$
\q^*_{\sf sing}=\{\gamma\in\q^* \mid \dim\q_\gamma>\ind \q\}.
$$ 
Set also $\q^*_{\sf reg}:=\q^*\setminus \q^*_{\sf sing}$.
Further, $\q$ is said to have 
the ``codim--2" property (=\,to satisfy the ``codim--2" condition), if 
$\dim\q ^*_{\sf sing}\le \dim\q-2$.   
We say that $\q$ satisfies {\it the Kostant regularity criterion\/} (=\,{\it\bfseries KRC}) if the following properties hold for $\gS (\q)^Q$ and
$\xi\in\g^*$:
\begin{itemize}
\item $\gS (\q)^Q=\bbk[f_1,\dots,f_l]$ is a graded polynomial ring (with basic invariants $f_1,\dots,f_l$);
  \item $\xi\in\q^*_{\sf reg}$ if and only if $(\textsl{d}f_1)_\xi,\dots,(\textsl{d}f_l)_\xi$ are linearly
independent.
\end{itemize}
Every reductive Lie algebra has the ``codim--2" property and satisfies {\it\bfseries KRC}. 

Observe that $(\textsl{d}f)_\xi\in\q_\xi$ for each $f\in\bbk[\q^*]^Q$. 

\begin{thm}[{cf. \cite[Theorem~1.2]{coadj}}]\label{thm-codim2}
If $\q$ has the ``codim--2" property,  $\trdeg \gS (\q)^Q=\ind\q=l$, and
there are algebraically independent $f_1,\dots,f_l\in \gS (\q)^Q$ such that
$\sum\limits_{i=1}^l \deg f_i=\boldsymbol{b}(\q)$, then $f_1,\dots,f_l$ freely generate $\gS (\q)^Q$  
and the {\it\bfseries KRC\/} holds for $\q$.
\end{thm}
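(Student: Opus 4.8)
The statement is essentially a "freeness via the sum-of-degrees formula" criterion, a well-known type of argument going back to the work on symmetric invariants of centralisers. The key inequality that drives everything is the following: whenever $f_1,\dots,f_l\in\gS(\q)^Q$ are algebraically independent with $l=\ind\q$, and $\q$ has the ``codim--2'' property, then $\sum_{i=1}^l\deg f_i\ge\boldsymbol{b}(\q)$, with equality forcing both the freeness of $\gS(\q)^Q$ on $f_1,\dots,f_l$ and the {\bfseries KRC}. So the proof reduces to establishing this inequality and analysing the equality case.

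\smallskip

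First I would form the differential $\pi\colon\xi\mapsto (\textsl{d}f_1)_\xi\wedge\dots\wedge(\textsl{d}f_l)_\xi$, a morphism from $\q^*$ to $\Lambda^l\q$, and consider the ``fundamental semi-invariant'' $\boldsymbol{p}$: since $\pi$ is not identically zero (the $f_i$ being algebraically independent), its image lies generically in lines of the form $\bigwedge\q_\xi$ for $\xi\in\q^*_{\sf reg}$, and one can write $\pi=\boldsymbol{p}\cdot\tilde\pi$ where $\boldsymbol{p}\in\bbk[\q^*]$ is the gcd of the coordinates of $\pi$ and $\tilde\pi$ has no divisorial zeros. The zero set of $\pi$ — equivalently the set where $(\textsl{d}f_i)_\xi$ become dependent — contains $\q^*_{\sf sing}$, which has codimension $\ge 2$ by hypothesis; hence the divisorial part of $\{\pi=0\}$, which is exactly $\{\boldsymbol{p}=0\}$, is contained in $\{\pi=0\}\setminus\q^*_{\sf reg}$. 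A dimension/degree count then yields $\deg\boldsymbol{p}+\sum_i(\deg f_i-1)\ge\dim\q-l$, i.e. $\sum_i\deg f_i\ge \dim\q-l+l+\deg\boldsymbol{p}\cdot(\text{something})$; more precisely one gets $\sum_i\deg f_i=\boldsymbol{b}(\q)+\tfrac12\deg\boldsymbol p\ge\boldsymbol b(\q)$, using $\boldsymbol b(\q)=(\dim\q+l)/2$ and the standard relation between the degree of $\pi$ and the $\deg f_i$. This is the heart of the matter and I would lift it more or less verbatim from \cite[Theorem~1.2]{coadj} (the cited source), checking only that the hypotheses there match ours.

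\smallskip

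Given the inequality, the hypothesis $\sum_i\deg f_i=\boldsymbol{b}(\q)$ forces $\deg\boldsymbol{p}=0$, i.e. $\boldsymbol{p}\in\bbk^\times$, which means $\pi$ itself has no divisorial zeros. Now I would invoke the standard freeness argument: because $\trdeg\gS(\q)^Q=l$, the $f_i$ generate a polynomial subalgebra $\mathcal{A}=\bbk[f_1,\dots,f_l]\subseteq\gS(\q)^Q$ of full transcendence degree; to upgrade this to equality one shows $\gS(\q)^Q$ is integral over $\mathcal A$ and that $\mathcal{A}$ is algebraically closed in $\gS(\q)$ in the relevant sense, or — the slicker route — one uses that $\{\pi\ne 0\}$ is big and that on it $\mathrm{d}f_1,\dots,\mathrm{d}f_l$ span the tangent space to the generic fibre, so any $Q$-invariant is a polynomial in the $f_i$ by a Jacobian/Vandermonde-type argument (this is precisely the mechanism by which $\deg\boldsymbol p=0$ implies freeness in the theory of \cite{Y16, coadj}). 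Finally, the {\bfseries KRC} is then automatic: $\{\pi\ne0\}$ is exactly the set where the $(\textsl{d}f_i)_\xi$ are independent, it is big, it is contained in $\q^*_{\sf reg}$ (since independence of $l=\ind\q$ elements of $\q_\xi$ forces $\dim\q_\xi=l$), and conversely $\q^*_{\sf sing}\subseteq\{\pi=0\}$ has been used above; combined with $\{\boldsymbol p=0\}=\varnothing$ this gives $\{\pi\ne0\}=\q^*_{\sf reg}$, which is the content of {\bfseries KRC}.

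\smallskip

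\emph{Main obstacle.} The genuinely delicate step is the degree inequality $\sum_i\deg f_i\ge\boldsymbol b(\q)$ together with the identification of its equality case — this is where the ``codim--2'' property is used in an essential way (to control the divisorial part of the zero locus of the wreath/Jacobian map $\pi$) and where one must be careful that ``algebraically independent, correct transcendence degree'' is enough to run the argument without assuming polynomiality a priori. Everything downstream (freeness, then {\bfseries KRC}) is a formal consequence once $\boldsymbol p$ is known to be a nonzero constant. I would therefore spend most of the write-up on a clean statement of the $\pi$-versus-$\boldsymbol p$ dictionary and then cite \cite[Theorem~1.2]{coadj} for the inequality itself, since the excerpt explicitly flags this theorem as ``cf.'' that source.
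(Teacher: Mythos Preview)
The paper does not prove this theorem; it is stated with a bare citation to \cite[Theorem~1.2]{coadj}, so there is no ``paper's own proof'' to compare against. That said, your sketch has the right architecture but also a genuine gap in the mechanism, and since you flag the degree inequality as the crux it is worth naming precisely what is missing.

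You work only with the Jacobian section $\pi(\xi)=(\textsl{d}f_1)_\xi\wedge\dots\wedge(\textsl{d}f_l)_\xi$ and its gcd $\boldsymbol{p}$. The argument in \cite{coadj} (and in \cite{ppy}) hinges on a \emph{second} polynomial section $\Omega\colon\q^*\to\bigwedge^l\q$, built from the Pfaffians of the Kirillov form $B_\xi(a,b)=\xi([a,b])$, with the properties that $\Omega(\xi)$ spans $\bigwedge^l\q_\xi$ exactly on $\q^*_{\sf reg}$ and $\deg\Omega=(\dim\q-l)/2=\boldsymbol{b}(\q)-l$. The ``codim--2'' hypothesis makes $\Omega$ primitive. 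Since $(\textsl{d}f_i)_\xi\in\q_\xi$ for all $\xi$, one has $\pi(\xi)\in\bbk\,\Omega(\xi)$ on $\q^*_{\sf reg}$, whence $\pi=r\Omega$ globally for a polynomial $r$, and $\sum_i\deg f_i=\boldsymbol{b}(\q)+\deg r$. Equality forces $r\in\bbk^\times$, so $\pi$ and $\Omega$ have the \emph{same} nonvanishing locus, namely $\q^*_{\sf reg}$; this is simultaneously the {\it\bfseries KRC} and the input for the freeness argument. Without $\Omega$ you have no a~priori handle on $\deg\tilde\pi$ or on where $\pi$ vanishes, and your displayed degree identities (including the stray factor $\tfrac12$) do not hold as written.

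Two concrete slips illustrate the issue. First, the inclusion $\q^*_{\sf sing}\subset\{\pi=0\}$ is not obvious directly: for singular $\xi$ one has $\dim\q_\xi>l$, and $l$ vectors in a larger space may well be independent. It becomes true only \emph{after} one knows $\pi=r\Omega$. Second, your {\it\bfseries KRC} paragraph asserts that independence of the $l$ differentials $(\textsl{d}f_i)_\xi\in\q_\xi$ forces $\dim\q_\xi=l$; in fact it only forces $\dim\q_\xi\ge l$, which is automatic. The implication $\{\pi\ne0\}\subset\q^*_{\sf reg}$ again comes from $\pi=c\Omega$, not from linear algebra.
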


Suppose that $Q$ acts on an affine variety $X$. Then $f\in\bbk[X]$ is a {\it semi-invariant} of $Q$ if 
$g{\cdot}f\in\bbk f$ for each $g\in Q$. A semi-invariant is said to be {\it proper} if it is not an invariant.  
If $Q$ has no non-trivial characters (all $1$-dimensional representations of $Q$ are trivial), then 
it has no proper semi-invariants.   In particular, if $Q$ is a semi-direct product of a semisimple and a unipotent 
group, then all its semi-invariants are invariants.    We record a well-known observation:
\begin{itemize}
\item if $Q$ has no proper semi-invariants in $\gS(\q)$, then $\bbk(\q^*)^Q={\rm Quot}\,\bbk[\q^*]^Q$
and hence ${\rm tr.deg}\,\bbk[\q^*]^Q=\ind\q$. 
\end{itemize}

\begin{thm}[{cf. \cite[Prop.~5.2]{JSh}}]\label{thm-dif}
Suppose that  $Q$ has no proper semi-invariants in $\gS(\q)$ and 
$\gS(\q)^{Q}$ is freely generated by $f_1,\ldots,f_l$. Then 
the differentials $\textsl{d}f_1,\dots,\textsl{d}f_l$ are linearly independent on a 
big open subset of $\q^*$.  
\end{thm}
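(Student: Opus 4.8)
The plan is to deduce the statement from the two basic facts recorded just above: since $Q$ has no proper semi-invariants in $\gS(\q)$, one has $\bbk(\q^*)^Q = {\rm Quot}\,\bbk[\q^*]^Q$, and therefore $\trdeg\,\bbk[\q^*]^Q = \ind\q$; in particular $l = \ind\q$, and $\bbk(\q^*)^Q$ is generated as a field by $f_1,\dots,f_l$. The differentials $\textsl{d}f_1,\dots,\textsl{d}f_l$ are then linearly independent at a point $\xi$ precisely when $\xi$ avoids a certain proper closed subset, and the content of the statement is that this subset contains no divisors. So the real task is to bound the codimension of the locus where the Jacobian of $(f_1,\dots,f_l)$ drops rank.

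First I would set $\mathbf{f}=(f_1,\dots,f_l)\colon \q^*\to \bbk^l$ and let $Z\subseteq\q^*$ be the closed set where $\textsl{d}f_1,\dots,\textsl{d}f_l$ fail to be linearly independent; equivalently, $Z$ is the non-smooth locus of the morphism $\mathbf{f}$ relative to generic fibres, or the locus where the rank of the Jacobian matrix is $<l$. Because $f_1,\dots,f_l$ are algebraically independent, $\mathbf{f}$ is dominant and its generic fibre has dimension $\dim\q - l$; by generic smoothness in characteristic zero, a generic fibre is smooth of this dimension, so $Z\ne\q^*$ and $\dim Z<\dim\q$. The point requiring the no-semi-invariants hypothesis is to rule out $Z$ having an irreducible component of codimension exactly one.

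The key step, following \cite[Prop.~5.2]{JSh}, is: suppose $D\subseteq Z$ is an irreducible divisor. The essential observation is that $Z$ is $Q$-stable (the condition of differentials being dependent is $Q$-equivariant, since $Q$ permutes the $f_i$ up to nothing — they are genuine invariants — hence $\textsl{d}(g\cdot f_i)=g\cdot(\textsl{d}f_i)_{g^{-1}\cdot}$ and linear dependence is preserved), so the divisorial part of $Z$ is a union of $Q$-stable divisors. A $Q$-stable prime divisor in the factorial ring $\gS(\q)=\bbk[\q^*]$ is the zero locus of an irreducible polynomial $h$, and $Q$-stability forces $h$ to be a semi-invariant of $Q$; by hypothesis $h$ is then an invariant, so $h\in\bbk[\q^*]^Q=\bbk[f_1,\dots,f_l]$. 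Now I would derive a contradiction: write $h=P(f_1,\dots,f_l)$ for a polynomial $P$, and observe via the chain rule that $\textsl{d}h = \sum_i \dfrac{\partial P}{\partial f_i}(\mathbf{f})\,\textsl{d}f_i$. On the dense open set where the $\textsl{d}f_i$ are independent, $\textsl{d}h$ vanishes only where all $\partial P/\partial f_i$ vanish; but if $h$ is irreducible and its differential vanishes on all of $D=\{h=0\}$ (which it would, were $D\subseteq Z$, since on $D$ we are forced to have $\textsl{d}h$ expressible through dependent differentials — one must check $\textsl{d}h|_D=0$ using that $h$ is non-reduced-free and $D$ is reduced), this contradicts $\{h=0\}$ being a reduced, generically smooth hypersurface in the smooth variety $\q^*$. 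Hence no such divisor $D$ exists, and $\codim Z\ge 2$, i.e. $\q^*\setminus Z$ is big.

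The main obstacle is the last implication: carefully justifying that if a prime divisor $D$ lies in the Jacobian-degeneracy locus $Z$, then after passing to the defining invariant $h$ of $D$ one actually obtains a contradiction with smoothness — this requires knowing that $\textsl{d}h$ does not vanish identically on $D$ (true since $D$ is reduced and $\q^*$ is smooth, so $\{h=0\}$ is generically smooth) together with a clean argument that membership of $D$ in $Z$ forces $\textsl{d}h|_D \equiv 0$. One route is dimension-counting on the incidence variety $\{(\xi, (c_i)) : \sum c_i (\textsl{d}f_i)_\xi = 0\}$; another is to localize at the generic point of $D$ and argue that the images $f_1,\dots,f_l$ generate the function field over $\bbk$, so their differentials span the (one-dimensional-over-function-field) module of Kähler differentials of $\bbk(\q^*)$ away from $Z$, while on $D\subseteq Z$ they would span a proper submodule — yet $h$, being a generator of the local ring's maximal ideal along $D$, has $\textsl{d}h$ a generator of the differentials, forcing the span to be everything, contradiction. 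I would write it via this local/Kähler-differentials argument, which is the cleanest way to make "no semi-invariants $\Rightarrow$ codim $\ge 2$" precise.
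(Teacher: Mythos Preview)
The paper does not give a proof of this theorem; it is stated with a reference to \cite[Prop.~5.2]{JSh} and used as a black box, so there is no ``paper's proof'' to compare against. Your overall strategy---observe that the degeneracy locus $Z$ is $Q$-stable, hence any irreducible divisor $D\subseteq Z$ is cut out by an irreducible semi-invariant, hence (by hypothesis) by an invariant $h=P(f_1,\ldots,f_l)$, and derive a contradiction---is the right one and is essentially what Joseph--Shafrir do.

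However, the contradiction step as you have written it does not go through. You assert that $D\subseteq Z$ would force $(\textsl{d}h)|_D\equiv 0$, ``since on $D$ we are forced to have $\textsl{d}h$ expressible through dependent differentials''. But linear dependence of $(\textsl{d}f_1)_\xi,\ldots,(\textsl{d}f_l)_\xi$ only means that they span a subspace of $T_\xi^*\q^*$ of dimension $<l$; the particular combination $(\textsl{d}h)_\xi=\sum_i\frac{\partial P}{\partial x_i}(\mathbf f(\xi))\,(\textsl{d}f_i)_\xi$ may perfectly well be nonzero. Your K\"ahler-differentials alternative is also off: the $f_i$ generate $\bbk(\q^*)^Q$, not $\bbk(\q^*)$, and $\Omega_{\bbk(\q^*)/\bbk}$ has dimension $\dim\q$ over the function field, not one.

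The clean way to finish is as follows. Since $h=P(\mathbf f)$ with $P$ irreducible, one has $D=\mathbf f^{-1}(V(P))$, and since $\mathbf f$ is dominant onto $\bbk^l$ the restriction $\mathbf f|_D\colon D\to V(P)$ is dominant as well. By generic smoothness (characteristic zero) applied to $\mathbf f|_D$, a generic $\xi\in D$ is a smooth point of $D$ mapping to a smooth point of $V(P)$, with $(\textsl{d}\mathbf f)_\xi(T_\xi D)=T_{\mathbf f(\xi)}V(P)$; thus $(\textsl{d}\mathbf f)_\xi$ already has rank at least $l-1$. Now pick $v\in T_\xi\q^*$ with $(\textsl{d}h)_\xi(v)\ne 0$ (possible because $\xi$ is a smooth point of the reduced hypersurface $D$); by the chain rule $(\textsl{d}P)_{\mathbf f(\xi)}\bigl((\textsl{d}\mathbf f)_\xi(v)\bigr)=(\textsl{d}h)_\xi(v)\ne 0$, so $(\textsl{d}\mathbf f)_\xi(v)\notin T_{\mathbf f(\xi)}V(P)$. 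Hence $(\textsl{d}\mathbf f)_\xi$ has rank $l$, i.e.\ $\xi\notin Z$, contradicting $D\subseteq Z$.
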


For any Lie algebra $\q$ defined over $\bbk$, set $\q_{\overline{\bbk}}:=\q{\otimes}_{\bbk}\overline{\bbk}$.\
Then
$\gS(\q_{\overline{\bbk}})^{\q_{\overline{\bbk}}}=\gS(\q)^{\q}{\otimes}_{\bbk}\overline{\bbk}$. 
If we extend the field, then a set of the generating  invariants over $\bbk$ is again a set of the generating invariants over 
$\overline{\bbk}$. In the other direction, having a minimal set ${\mathcal M}$ of homogeneous generators  over $\overline{\bbk}$, any $\bbk$-basis of $\left<{\mathcal M}\right>_{\overline{\bbk}}\cap \gS(\q)$ is a minimal set
of generators over $\bbk$.  The properties like ``being a polynomial ring'' do not change under field 
extensions. 
 The results in this paper are valid over fields that are not algebraically closed, but in the proofs 
we may safely assume that $\bbk=\overline{\bbk}$.

\section{On the coadjoint representations of a semi-direct product}
\label{sect:kos-th}

\noindent
For semi-direct products, there are some specific approaches to the  symmetric invariants.  
Our convention is that $G$ is always a connected {\bf reductive} group and $\gt g=\Lie G$, whereas a group $Q$ is not necessarily reductive and $\q=\Lie Q$. In this section, either 
$\es=\g{\ltimes} V$ or  $\es=\q{\ltimes} V$, where $V$ is a finite-dimensional $G$- or $Q$-module.
Then $S$ is a connected algebraic group with $\Lie S=\es$. For instance, $S=Q\ltimes \exp(V)$.

The vector space decomposition $\gt s=\q{\oplus} V$ leads to 
$\gt s^*=\q^*{\oplus} V^*$. For  $\q=\gt g$, we identify $\gt g$ with $\gt g^*$. 
Each element $x\in V^*$ is considered as a point of $\gt s^*$ that is zero on $\q$.
We have $\exp(V){\cdot}x=\ads(V){\cdot}x+x$, where each element of $\ads(V){\cdot}x$ is zero on $V$. 
Note that $\ads(V){\cdot}x\subset \Ann(\q_x)\subset \q^*$ and $\dim(\ads(V){\cdot}x)$ is equal to
$\dim(\ads(\q){\cdot}x)=\dim\q-\dim\q_x$. Therefore $\ads(V){\cdot}x= \Ann(\q_x)$. 

There is a general formula \cite{rais} for the index of $\gt s=\q{\ltimes} V$:   
\begin{equation}       \label{ind-sum}
\ind\gt s=\dim V-(\dim\q-\dim\q_x)+\ind\q_x \ \text{ with $x\in V^*$ generic. } 
\end{equation}

The decomposition $\gt s=\q{\oplus} V$ defines the bi-grading on $\gS(\gt s)$ and it appears that 
$\gS(\es)^{S}$ is a bi-homogeneous subalgebra, cf.~\cite[Theorem~2.3({\sf i})]{coadj}. 

For any  $x\in V^*$, the affine space $\q^*{+}x$ is $\exp(V)$-stable and $Q_x$-stable. Further, there is the restriction homomorphism
$$
\psi_x : \, \bbk[\gt s^*]^S\to \bbk[\q^*{+}x]^{Q_x{\ltimes}\exp(V)}\simeq \gS(\q_x)^{Q_x}.
$$
The existence of the isomorphism $\bbk[\q^*{+}x]^{\exp(V)}\simeq \gS(\q_x)$ is proven in \cite{Y16}. 
If we choose $x$ as the origin in $\gt q^*{+}x$, then actually $\psi_x(H)\in \gS(\gt q_x)$ 
for each $H\in \bbk[\gt s^*]^{\exp(V)}$, see \cite[Prop.~2.7]{Y16}.

Suppose that $Q\lhd  \tilde Q$ and there is an action of $\tilde Q$ on $V$ that extends the $Q$-action.  
Set $\tilde{\gt s}=\tilde{\q}{\ltimes} V$, $\tilde S=\tilde Q{\ltimes}\exp(V)$.

\begin{lm}\label{lm-sub} We have $\gS(\gt s)^{\tilde S}\subset \gS(\gt s)^{S}$ and 
$H\in \gS(\tilde{\gt s})^{\tilde S}$ lies in $\gS(\gt s)$ if and only if 
the restriction of $H$ to $\tilde{\q}^*{+}x$ lies in $\gS(\q_x)$ for a generic $x\in V^*$.
\end{lm}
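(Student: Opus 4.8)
The plan is to analyse separately the two assertions. The inclusion $\gS(\gt s)^{\tilde S}\subset \gS(\gt s)^{S}$ is immediate: since $Q\lhd\tilde Q$, we have $S=Q{\ltimes}\exp(V)\le \tilde Q{\ltimes}\exp(V)=\tilde S$, hence every $\tilde S$-invariant polynomial on $\gt s^*$ is in particular $S$-invariant. (Here we use that the $\tilde Q$-action on $V$ extends the $Q$-action, so that $\gt s=\q{\ltimes}V$ is an ideal in $\tilde{\gt s}$ and the restriction of functions makes sense; more directly one just restricts $\tilde S$-invariant functions on $\gt s^*$ along the inclusion of groups.) So the substance of the lemma is the ``if and only if'' statement.

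For the second part, fix $H\in\gS(\tilde{\gt s})^{\tilde S}$. First I would note that $H\in\gS(\gt s)$ is equivalent to $H$ being constant along cosets of $V^*$ inside $\tilde{\gt s}^*=\tilde\q^*{\oplus}\q^*/\ldots$; more precisely, writing $\tilde\q=\q\oplus\ma$ for a complement $\ma$, we have $\tilde{\gt s}^*=\gt s^*\oplus\ma^*$, and $H\in\gS(\gt s)$ exactly when $H$ does not involve the $\ma^*$-coordinates. The ``only if'' direction is then clear: if $H\in\gS(\gt s)$, its restriction to $\tilde\q^*{+}x$ only depends on the $\q^*$-part and equals the restriction to $\q^*{+}x$, which by the discussion preceding the lemma (choosing $x$ as the origin, using \cite[Prop.~2.7]{Y16}) lies in $\gS(\q_x)$ for generic $x\in V^*$.

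For the ``if'' direction, suppose that for generic $x\in V^*$ the restriction $H|_{\tilde\q^*+x}$ lies in $\gS(\q_x)\subset\gS(\tilde\q_x)$. Here I would use the bi-homogeneity of $\gS(\tilde{\gt s})^{\tilde S}$: decompose $H=\sum_j H_j$ into its $V$-homogeneous components, where $H_j$ has $V$-degree $j$; each $H_j$ is again $\tilde S$-invariant, and $H\in\gS(\gt s)$ iff each $H_j\in\gS(\gt s)$, so we may assume $H$ is $V$-homogeneous of degree $j$, i.e. $H\in\gS^j(V)\otimes\gS(\tilde\q)$. Write $H=\sum_\alpha v^\alpha\otimes H_\alpha$ with $v^\alpha$ ranging over a monomial basis of $\gS^j(V)$ and $H_\alpha\in\gS(\tilde\q)$. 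Restricting to $\tilde\q^*+x$ substitutes the numerical values $x^\alpha:=v^\alpha(x)$, so $H|_{\tilde\q^*+x}=\sum_\alpha x^\alpha H_\alpha\in\gS(\tilde\q)$. The hypothesis says this lies in $\gS(\q_x)$. I now want to conclude that each $H_\alpha\in\gS(\q)$ (equivalently, that $H_\alpha$ has zero $\ma^*$-component), which would give $H\in\gS^j(V)\otimes\gS(\q)$; combined with the $\exp(V)$-invariance forcing the $\q$-part to lie in $\gS(\q_x)$ for generic $x$ already noted, this gives $H\in\gS(\gt s)$ (indeed $\gS(\gt s)^{S}$).

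The main obstacle is precisely this last deduction: from $\sum_\alpha x^\alpha H_\alpha\in\gS(\q_x)$ for \emph{generic} $x$ one must extract information about the individual $H_\alpha$, and $\q_x$ itself varies with $x$. The clean way to handle it is to use the description $\ads(V){\cdot}x=\Ann(\q_x)\subset\q^*$ from Section~\ref{sect:kos-th}: a polynomial $F\in\gS(\tilde\q)=\bbk[\tilde\q^*]$ lies in $\gS(\q_x)=\bbk[\tilde\q^*/\Ann(\q_x)]$ iff $F$ is invariant under translations by $\Ann(\q_x)=\ads(V){\cdot}x$, i.e. iff $F$ is killed by $\xi{\cdot}$ for all $\xi\in\ads(V){\cdot}x$. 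Applying this with $F=H|_{\tilde\q^*+x}$ and differentiating in $x$ (using that $x\mapsto\ads(V){\cdot}x$ sweeps out a space that, for generic $x$, together with its derivatives sees all of $V$ — this is where genericity of $x$ and the $V$-homogeneity reduction pay off) should force each $H_\alpha$ to be annihilated by the $V$-action, hence to lie in $\gS(\q_x)$ for all generic $x$ and in particular to have trivial $\ma^*$-dependence. Alternatively, and perhaps more in the spirit of \cite{Y16}, one invokes \cite[Prop.~2.7]{Y16} directly: the isomorphism $\bbk[\tilde\q^*+x]^{\exp(V)}\simeq\gS(\tilde\q_x)$ is compatible with the analogous isomorphism for $\q$ in a way that identifies $\gS(\q_x)\subset\gS(\tilde\q_x)$ with the image of $\bbk[\gt s^*]^{\exp(V)}$; then $H|_{\tilde\q^*+x}\in\gS(\q_x)$ for generic $x$ says that the image of $H$ under $\psi_x$ comes from $\gt s$, and a density/continuity argument over the generic $x$ upgrades this to $H\in\gS(\gt s)$. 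I would write the proof using whichever of these two routes is shortest given the precise form of \cite[Prop.~2.7]{Y16}.
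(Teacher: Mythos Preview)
Your treatment of the inclusion and of the ``only if'' direction is fine and matches the paper's. For the ``if'' direction you set things up correctly but then manufacture an obstacle that is not there. You worry that $\q_x$ varies with $x$ and propose two work-arounds (translation invariance along $\ads(V){\cdot}x$, or a compatibility-plus-density argument via \cite[Prop.~2.7]{Y16}). But the variation of $\q_x$ is irrelevant: since $\q_x\subset\q$ for every $x$, one has $\gS(\q_x)\subset\gS(\q)$, and $\gS(\q)$ does \emph{not} depend on $x$. Thus the hypothesis already gives $\sum_\alpha x^\alpha H_\alpha\in\gS(\q)$ for generic $x$; as $\gS(\q)$ is a fixed linear subspace of $\gS(\tilde\q)$, this forces each $H_\alpha\in\gS(\q)$ at once. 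The paper argues exactly this way, in contrapositive form: pick a complement $\ma$ of $\q$ in $\tilde\q$; if $H\notin\gS(\q)\otimes\gS(V)$ then $H|_{\tilde\q^*+x}\notin\gS(\q)$ for generic $x$, hence a fortiori $\notin\gS(\q_x)$.

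Your Route~1 also contains a genuine conflation. You write $\gS(\q_x)=\bbk[\tilde\q^*/\Ann(\q_x)]$ and invoke the identity $\ads(V){\cdot}x=\Ann(\q_x)$ from the paper. But that identity is about the annihilator of $\q_x$ in $\q^*$; inside $\tilde\q^*$ the correct statement is $\ads(V){\cdot}x=\Ann(\tilde\q_x)$. When $\q_x\subsetneq\tilde\q_x$ these differ, and invariance under translations by $\ads(V){\cdot}x$ only gives membership in $\gS(\tilde\q_x)$, not $\gS(\q_x)$. So Route~1 as written would not close the argument without the same observation $\gS(\q_x)=\gS(\q)\cap\gS(\tilde\q_x)$ that makes the whole detour unnecessary.
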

\begin{proof}
The inclusion  $\gS(\gt s)^{\tilde S}\subset \gS(\gt s)^{S}$ is obvious. 
Now let $\gt m$ be  a vector space complement of $\q$ in $\tilde{\q}$. Then
$\gS(\tilde{\gt s})=\gS(\q)\otimes\gS(\gt m)\otimes\gS(V)$.  If $H$ does not lie in 
$\gS(\q)\otimes\gS(V)$, then $H|_{\tilde{\q}^*+x}$ does not lie in $\gS(\q)$ for any $x$ from 
a non-empty open subset of $V^*$. 

Finally, suppose that $H\in \gS(\gt s)^{\exp(V)}$. Then $H|_{\tilde{\q}^*+x}$ lies in 
$\gS(\tilde{\q}_x)$ by \cite[Prop.~2.7]{Y16}. Clearly, $\gS(\q)\cap \gS(\tilde{\q}_x)=\gS(\q_x)$.   
\end{proof}

\begin{prop}[Prop.\,3.11 in \cite{Y16}]         \label{non-red} 
Let $Q$ be a connected algebraic group acting on a finite-dimensional vector space $V$. 
Set $\es=\q\ltimes V$. Suppose that 
$Q$ has no proper semi-invariants in\/ $\bbk[\es^*]^{\exp(V)}$ and\/ $\bbk[\es^*]^S$ is a 
polynomial  ring in $\ind \es$ variables. 
For  generic $x\in V^*$, we then have 
\begin{itemize}
\item the restriction map 
$\psi\!: \bbk[\es^*]^S \to \bbk[\q^*{+}x]^{Q_x{\ltimes}\exp(V)}\simeq \gS(\q_x)^{Q_x}$ is onto; 
\item $\gS(\q_x)^{Q_x}$ coincides with $\gS(\q_x)^{\q_x}$; 
\item $\gS(\q_x)^{Q_x}$ is a polynomial ring in $\ind\q_x$ variables. 
\end{itemize}
\end{prop}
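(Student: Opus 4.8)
The plan is to extract the three assertions from the general restriction machinery of \cite{Y16} together with the hypothesis that $\bbk[\es^*]^S$ is polynomial. The starting point is the restriction homomorphism $\psi=\psi_x\colon \bbk[\es^*]^S\to \bbk[\q^*{+}x]^{Q_x\ltimes\exp(V)}\simeq\gS(\q_x)^{Q_x}$ recalled just before Lemma~\ref{lm-sub}, for generic $x\in V^*$. The key input is that $Q$ has no proper semi-invariants in $\bbk[\es^*]^{\exp(V)}$, which (as noted in Section~\ref{sect:prelim}) forces $\trdeg\bbk[\es^*]^S=\ind\es$, so that the $\ind\es$ basic invariants $f_1,\dots,f_l$ of the polynomial ring $\bbk[\es^*]^S$ are algebraically independent and their differentials are linearly independent on a big open subset of $\es^*$ by Theorem~\ref{thm-dif}.

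First I would prove surjectivity of $\psi$. Choosing $x$ as the origin of $\q^*{+}x$, one has $\psi(H)\in\gS(\q_x)$ for every $H\in\bbk[\es^*]^{\exp(V)}$ by \cite[Prop.~2.7]{Y16}, so the image of $\psi$ sits inside $\gS(\q_x)^{Q_x}$; the content is that it is all of $\gS(\q_x)^{Q_x}$. The idea is a dimension/differential count at a generic point of the fiber. By the index formula \eqref{ind-sum}, $\ind\es=\dim V-(\dim\q-\dim\q_x)+\ind\q_x$, i.e. $\ind\es-\ind\q_x=\dim(\ads(V){\cdot}x)=\dim\Ann(\q_x)$ equals the codimension of the affine subspace $\q^*{+}x$ inside its $\exp(V)$-orbit sweep; more precisely $\dim(\es^*)-\dim(\q^*{+}x)=\dim V+\dim\q-\dim\q_x$ and the generic $\exp(V)\rtimes$-orbit through a point of $\q^*{+}x$ meets $\q^*{+}x$ in an $\exp(V)$-orbit of dimension $\dim V-\dim\q_x^{0}$... the cleaner route is: the differentials $df_1,\dots,df_l$ are independent at a generic $\gamma\in\q^*{+}x$ (the big open set from Theorem~\ref{thm-dif} meets $\q^*{+}x$ since that subspace is not contained in a divisor for generic $x$), and the $S$-invariance forces $(df_i)_\gamma\in\es_\gamma$; restricting to the tangent directions of $\q^*{+}x$ and using $(df_i)_\gamma$ kills the $V$-directions (again by the "origin" normalization), one gets that the $\psi(f_i)$ have $\ind\q_x=l-(\ind\es-\ind\q_x)$... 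This needs care: in fact exactly $\ind\q_x$ of the restricted differentials survive as independent, because the generic $\exp(V)$-orbit in $\q^*{+}x$ has dimension $\dim V-\dim\q_x$ (using $\ads(V){\cdot}x=\Ann(\q_x)$ and that $\q_x$ acts trivially enough), and $\psi$ factors through the quotient by this orbit, which is $\gS(\q_x)$. So $\psi(\bbk[\es^*]^S)$ is a subalgebra of $\gS(\q_x)^{Q_x}$ containing $\ind\q_x$ algebraically independent elements whose differentials are generically independent; combined with the fact (from \cite{Y16}, or from the general theory of $\exp(V)$-actions on $\q^*{+}x$) that $\trdeg\gS(\q_x)^{Q_x}\le\ind\q_x$ and that $\psi$ is a quotient map onto $\q^*{+}x\md(Q_x\ltimes\exp(V))$, surjectivity follows. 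I expect this to be the main obstacle: pinning down precisely why the image exhausts the invariants rather than a proper subalgebra, which is where one genuinely invokes the polynomiality of $\bbk[\es^*]^S$ and the "no proper semi-invariants" hypothesis to rule out a drop in transcendence degree.

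Once $\psi$ is onto, the remaining two points come for free. Since $\psi(\bbk[\es^*]^S)=\gS(\q_x)^{Q_x}$ and $\bbk[\es^*]^S$ is generated by $f_1,\dots,f_l$, the algebra $\gS(\q_x)^{Q_x}$ is generated by the $l$ elements $\psi(f_1),\dots,\psi(f_l)$; as $\trdeg\gS(\q_x)^{Q_x}=\ind\q_x$, a subset of these of size $\ind\q_x$ is algebraically independent, and since a finitely generated domain of transcendence degree $r$ generated by $r$ elements after discarding the dependent ones is in fact polynomial provided it is integrally closed... the correct argument is: by general theory $\gS(\q_x)\supset\gS(\q_x)^{Q_x}$ with $\gS(\q_x)$ a polynomial (hence factorial) ring, and a finitely generated factorially closed subalgebra generated by $\ind\q_x$ elements of transcendence degree $\ind\q_x$ must itself be polynomial; alternatively one transports the grading and the KRC-type argument of Theorem~\ref{thm-codim2} through $\psi$. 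This gives the third bullet. Finally, $\gS(\q_x)^{Q_x}=\gS(\q_x)^{\q_x}$: the inclusion $\subseteq$ needs nothing, and for $\supseteq$ one notes that $\psi$ already lands in $\gS(\q_x)^{Q_x}$ and its image is all $\q_x$-invariants because every $\q_x$-invariant extends (via the big-open-set independence of the $df_i$ and $S$-invariance, which forces the restricted differentials to span $\q_x^*$ modulo the orbit directions) to an $S$-invariant on $\es^*$; concretely, $\q_x=\es_\gamma\cap\q$ for generic $\gamma$, and the $(df_i)_\gamma$ span a generic stabiliser, so any $\q_x$-invariant polynomial is a polynomial in the $\psi(f_i)$, i.e. lies in $\gS(\q_x)^{Q_x}$. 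This closes the proof.
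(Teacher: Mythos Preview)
The paper does not give its own proof of this proposition; it is quoted verbatim from \cite[Prop.~3.11]{Y16}. So there is nothing in the present paper to compare your argument against, and your sketch has to be judged on its own.

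Your outline has the right ingredients (bi-homogeneity, Theorem~\ref{thm-dif}, the Ra\"is formula, the identification $\bbk[\q^*{+}x]^{\exp(V)}\simeq\gS(\q_x)$), but two of the three bullets are not actually proved.

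For surjectivity you end with: the image contains $\ind\q_x$ algebraically independent elements and $\trdeg\gS(\q_x)^{Q_x}\le\ind\q_x$, ``surjectivity follows''. It does not. A graded subalgebra of a graded domain with the same transcendence degree need not be the whole thing (think of $\bbk[t^2]\subset\bbk[t]$). You yourself flag this as ``the main obstacle'' and then move on without resolving it. What is missing is an argument that the differentials of the $\psi(f_i)$ are linearly independent on a \emph{big} open subset of $\q_x^*$ (not merely a nonempty one), and then a Skryabin-type statement (as used in \cite{Y16}) that a graded subalgebra with this property, sitting inside a ring with no proper semi-invariants, must exhaust the invariants. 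Transferring ``big'' from $\es^*$ to $\q_x^*$ through the slice $\q^*{+}x$ is a genuine step.

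For polynomiality your argument is circular or simply wrong. You write that since $\gS(\q_x)^{Q_x}$ is generated by the $l$ elements $\psi(f_i)$ and has transcendence degree $\ind\q_x$, ``a subset of these of size $\ind\q_x$ is algebraically independent, and \dots after discarding the dependent ones is in fact polynomial''. A quotient of a polynomial ring which is a domain of smaller transcendence degree need not be a polynomial ring (e.g.\ $\bbk[t^2,t^3]$ is generated by two elements, has trdeg $1$, and is not polynomial). The correct route is to split the bi-homogeneous generators into those of $\q$-degree $0$, which freely generate $\bbk[V^*]^Q$ and map to scalars under $\psi$, and the remaining $\ind\es-\dim V\md Q=\ind\q_x$ ones, and then to prove directly that the images of the latter are algebraically independent via the differential argument. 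Only after that do you get polynomiality of the image, and surjectivity is still a separate issue.

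Finally, for the equality $\gS(\q_x)^{Q_x}=\gS(\q_x)^{\q_x}$ you need that $\psi$ is onto the larger ring $\gS(\q_x)^{\q_x}$, which is strictly stronger than the first bullet; your sketch does not address why every $\q_x$-invariant (and not just every $Q_x$-invariant) lies in the image.
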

\noindent
Note that $Q$ is not assumed to be reductive and $Q_x$ is not assumed to be connected in the above proposition!

Let now $V$ be a $G$-module. 
By a classical result of Richardson, there is a non-empty  open subset $\Omega\subset V^*$ such that 
the stabilisers $G_x$  are conjugate in $G$ for all $x\in \Omega$, see e.g. \cite[Theorem~7.2]{VP}. 
In this situation (any representative of the conjugacy class of) $G_x$
is called a {\it generic isotropy group}, denoted $\mathsf{g.i.g.}(G:V^*)$, and $\g_x=\Lie G_x$
is a  {\it generic stabiliser} for the $G$-action on $V^*$.

If $G$ is semisimple and $V$ is a reducible $G$-module, say $V=V_1\oplus V_2$, 
then there is a trick that allows us to relate the polynomiality property for the symmetric invariants of 
$\es=\g\ltimes V$ to a smaller semi-direct product. The precise statement is as follows.

\begin{prop}[{cf. \cite[Prop.~3.5]{exc}}]   \label{prop:trick}
With $\es=\g\ltimes (V_1\oplus V_2)$ as above, let $H$ be a generic isotropy group for $(G:V^*_1)$. 
If\/ $\bbk[\es^*]^S$ is a polynomial ring, then so is\/ $\bbk[\tilde\q^*]^{\tilde Q}$, where 
$\tilde Q=H{\ltimes}\exp(V_2)$ or  $H^\circ{\ltimes}\exp(V_2)$. 
\end{prop}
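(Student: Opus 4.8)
The plan is to use the restriction-to-an-affine-slice technique built around Proposition~\ref{non-red}, applied twice: once for the big semi-direct product $\es=\g\ltimes(V_1\oplus V_2)$ and once to identify the intermediate object. Write $V=V_1\oplus V_2$ and note that $\es=\g\ltimes V$ contains the ideal $V$, and also contains $\g\ltimes V_1$ as a subalgebra. The key point is that for a generic $x\in V_1^*$ (viewed inside $V^*=V_1^*\oplus V_2^*$ as a point that is zero on $\g\oplus V_2$), the slice $\q^*+x$ with $\q=\g\ltimes V_2$... more precisely, one wants to apply the isomorphism $\bbk[\es^*]^{\exp(V_1)}\!\mid_{(\g\ltimes V_2)^*+x}\simeq\gS\big((\g\ltimes V_2)_x\big)$ coming from \cite[Prop.~2.7]{Y16}, and observe that the stabiliser $(\g\ltimes V_2)_x$ of a generic $x\in V_1^*$ is exactly $\h\ltimes V_2$ where $\h=\g_x=\Lie H$, since $\g_x$ acts on $V_2$ and $V_2$ acts trivially on $V_1^*$ (so no element of $V_2$ lies in the stabiliser of $x\ne 0$ generic, as $\ads(V_2)x=0$ automatically but the relevant thing is that $V_2\subset(\g\ltimes V_2)_x$ and contributes the ideal). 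Hence the slice algebra is $\gS(\h\ltimes V_2)=\bbk[\tilde\q^*]$ with $\tilde\q=\h\ltimes V_2$.

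The next step is to verify the hypotheses of Proposition~\ref{non-red} for the pair $(G\ltimes\exp(V_2),\,V_1)$, i.e. with the ``$Q$'' of that proposition being $Q:=G\ltimes\exp(V_2)$ and the ``$V$'' being $V_1$. First, $\es=\q\ltimes V_1$ indeed coincides with $\g\ltimes(V_1\oplus V_2)$ up to reordering the summands. Second, $Q=G\ltimes\exp(V_2)$ has no proper semi-invariants: $G$ is semisimple, so it has no non-trivial characters, and $\exp(V_2)$ is unipotent, so $Q$ is a semidirect product of a semisimple and a unipotent group and the remark after Theorem~\ref{thm-dif} applies (in fact one needs no proper semi-invariants in $\bbk[\es^*]^{\exp(V_1)}$, which follows since $\bbk[\es^*]^{\exp(V_1)}\subset\gS(\es)$ and $S$ has no proper semi-invariants). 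Third, $\bbk[\es^*]^S$ is a polynomial ring by hypothesis. Then the first bullet of Proposition~\ref{non-red} gives that $\psi_x\!:\bbk[\es^*]^S\to\bbk[\q^*+x]^{Q_x\ltimes\exp(V_1)}\simeq\gS(\q_x)^{Q_x}$ is onto, where $\q_x=(G\ltimes\exp(V_2))_x=H\ltimes\exp(V_2)=\tilde Q$ (up to the $H$ vs $H^\circ$ ambiguity), and the third bullet gives precisely that $\gS(\tilde\q)^{\tilde Q}=\bbk[\tilde\q^*]^{\tilde Q}$ is a polynomial ring in $\ind\tilde\q$ variables. That is the desired conclusion; the $H$ versus $H^\circ$ alternative comes from the fact that $Q_x$ in Richardson's theorem need not be connected, and one passes to $\tilde Q=H^\circ\ltimes\exp(V_2)$ when working with the connected group, noting that $\bbk[\tilde\q^*]^{H\ltimes\exp(V_2)}$ and $\bbk[\tilde\q^*]^{H^\circ\ltimes\exp(V_2)}$ agree on the level of the polynomiality statement via the field-extension and finite-index remarks in Section~\ref{sect:prelim}.

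The main obstacle I expect is the careful identification of the generic stabiliser: I must check that for generic $x\in V_1^*$ the group $(G\ltimes\exp(V_2))_x$ really is $H\ltimes\exp(V_2)$ (so that $\Lie$ of it is $\h\ltimes V_2$, i.e. that $V_2$ sits inside the stabiliser and nothing extra appears), and that ``generic in $V_1^*$'' as a point of $V^*$ is compatible with ``generic in $V^*$'' needed to invoke Proposition~\ref{non-red}. Since $V_2$ acts trivially on $V_1^*$ (because $V_1\oplus V_2$ is a direct sum of $G$-modules and the semidirect product structure makes $V$ abelian), every element of $\exp(V_2)$ fixes $x$, so $\exp(V_2)\subset(G\ltimes\exp(V_2))_x$; combined with $G_x=H$ one gets $(G\ltimes\exp(V_2))_x=H\ltimes\exp(V_2)$ because the projection $G\ltimes\exp(V_2)\to G$ sends the stabiliser onto $G_x$ with kernel $\exp(V_2)$. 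Genericity is fine because the condition ``$G_x\sim H$'' defines a non-empty open $\Omega_1\subset V_1^*$, and $\Omega_1\times V_2^*$ is a non-empty open subset of $V^*$ on which all the conclusions of Richardson's theorem and Proposition~\ref{non-red} that we use hold simultaneously after possibly shrinking. The only other thing to keep an eye on is that Proposition~\ref{non-red} is stated for a (possibly non-reductive, possibly disconnected-stabiliser) $Q$, which is exactly the generality we need here, so no extra work is required on that front.
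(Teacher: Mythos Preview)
Your approach is correct and is precisely the one the paper's organisation points to: the paper does not write out its own proof here but cites \cite[Prop.~3.5]{exc}, and the placement of Proposition~\ref{non-red} immediately before this statement makes clear that the intended argument is exactly yours---write $\es=(\g\ltimes V_2)\ltimes V_1$, set $Q=G\ltimes\exp(V_2)$, and apply Proposition~\ref{non-red} to $(Q,V_1)$.

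Two small comments. First, your handling of $H$ versus $H^\circ$ is slightly roundabout: the cleanest route is to observe that the \emph{second} bullet of Proposition~\ref{non-red} gives $\gS(\q_x)^{Q_x}=\gS(\q_x)^{\q_x}$, and the right-hand side is automatically the ring of invariants for the connected group $H^\circ\ltimes\exp(V_2)$; thus both choices of $\tilde Q$ yield the same polynomial ring, with no appeal to field extensions needed. Second, the hypothesis of Proposition~\ref{non-red} asks that $\bbk[\es^*]^S$ be polynomial \emph{in $\ind\es$ variables}; you should note explicitly (as you essentially do) that this follows from the bare polynomiality hypothesis because $S$, being a semidirect product of a semisimple and a unipotent group, has no proper semi-invariants, whence $\trdeg\bbk[\es^*]^S=\ind\es$.
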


The above passage from $\es$ to $\tilde\q$, i.e., from $(G, V_1\oplus V_2)$ to $(H^\circ, V_2)$ is 
called a {\it reduction}, and we denote it by $(G, V_1\oplus V_2)\longrightarrow (H^\circ, V_2)$ in the diagrams below.
This proposition is going to be used as a tool for proving that $\bbk[\es^*]^S$ is not polynomial.

In what follows, the irreducible representations of simple groups are often identified with their highest 
weights, using the Vinberg--Onishchik numbering of the fundamental weights~\cite{VO}. 
For instance, if $\varphi_1,\dots,\varphi_n$ are the fundamental weights of a simple algebraic group $G$, then $V=\varphi_i+2\varphi_{j}$ stands for the direct sum of three simple $G$-modules, with highest weights $\varphi_i$ (once) and $\varphi_{j}$ (twice). A full notation is $V=V_{\vp_i}+2V_{\vp_i}$.
Note that adding a trivial $1$-dimensional $G$-module $\bbk$ to $V$ does not affect the polynomiality property for $\es$.

\begin{ex}\label{ex-ch-1}
There is a diagram (tree) of reductions:
\[
\xymatrix{
(\Spin_{11}, 2\varphi_1{+}\varphi_5)\ar[dr]  & (\Spin_{10},\varphi_1{+}\varphi_4{+}\varphi_5) \ar[d] & 
(\Spin_{10},\varphi_1{+}2\varphi_4) \ar[ld]  & (\Spin_8, 2\varphi_1{+}\varphi_3) \ar[dl] \\
(\Spin_{12}, 3\varphi_1{+}\varphi_5) \ar[r] & 
(\Spin_{9}, 2\varphi_4)\ar[r] & (\Spin_{7}, \varphi_1{+}\varphi_3{+}\bbk) \ar[r]
 &  \text{\framebox{$(\SL_4, \varphi_1{+}\varphi_1^*$)} }. }
\]
For instance, the first diagonal arrow means that for $G=\Spin_{11}$ and $V_1=2\vp_1$, we have
$\mathsf{g.i.g.}(G,V_1)=\Spin_9$ and the restriction of $V_2=\vp_5$ to $H=\Spin_9$ is the $H$-module
$2\vp_4$. 
The terminal item (in the box) does not have  the polynomiality property by \cite{Y}.
Therefore  all the items 
here do not have the polynomiality property by Proposition~\ref{prop:trick}. 
\end{ex}

The action $(G:V)$ is said to be {\it stable\/} if the union of closed $G$-orbits is dense in $V$. Then 
 $\mathsf{g.i.g.}(G:V)$ is necessarily reductive.

We mention the following good situation. Suppose that  $G$ is semisimple.  If a generic stabiliser for the $G$-action on $V^*$ is reductive, then the action $(G:V^*)$ is stable~\cite[\S\,7]{VP}.
 Moreover,  $S$ has only trivial characters and no proper semi-invariants.

\begin{ex}[{cf. \cite[Example~3.6]{Y16}}] \label{ex-A1} If $G$ is semisimple, $\gt g_x=\gt{sl}_2$ for 
$x\in V^*$ generic, and $\bbk[V^*]^G$ is a polynomial ring, then 
$\gS(\gt s)^{\gt s}$ is a polynomial ring. 
\end{ex}  

We say that $\dim V\md G$ is the {\it rank} of the pair $(G,V)$.
For $(G,V^*)$ of rank one, we have two general results.

Consider the following assumptions on $G$ and $V$:

\begin{itemize}
\item[($\diamondsuit$)] \  
the action  $(G:V^*)$ is stable, $\bbk[V^*]^G$ is a  polynomial ring, $\bbk[\g^*_\xi]^{G_\xi}$ is a polynomial 
ring for generic $\xi\in V^*$, and $G$ has no proper semi-invariants in $\bbk[V^*]$.
\end{itemize}

\begin{thm}[ {\cite[Theorem~2.3]{exc} }]     \label{V-rank-1}
Suppose that $G$ and $V$ satisfy condition $(\diamondsuit)$ and $V^*\md G=\mathbb A^1$, i.e.,
$\bbk[V^*]^G=\bbk[F]$ for some homogeneous $F$. Let $L$ be a generic isotropy group for $(G:V^*)$. 
Assume further that $D=\{x\in V^*\mid F(x)=0\}$ contains an open $G$-orbit, say $G{\cdot} y$, 
$\ind\g_y=\ind\gt l=:\ell$, and $\gS(\g_y)^{G_y}$ is a polynomial ring in $\ell$ variables  
with the same degrees of generators as $\gS(\gt l)^{L}$. 
Then $\bbk[\es^*]^S$ is a polynomial ring in $\ind\es=\ell+1$ variables.
\end{thm}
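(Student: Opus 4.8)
The plan is to apply Theorem~\ref{thm-codim2} to $\es=\g\ltimes V$, so I need to verify the ``codim--2'' property for $\es$, compute $\ind\es$, and exhibit $\ell+1$ algebraically independent invariants whose degrees sum to $\boldsymbol{b}(\es)$. First I would compute the index. Since $V^*\md G=\mathbb A^1$ and $\bbk[V^*]^G=\bbk[F]$, the generic orbits $G{\cdot}x$ for $x\in V^*$ have codimension one, so $\dim\q-\dim\q_x=\dim V-1$ for generic $x$, where $\q_x=\g_x$ is conjugate to $\gt l$. Feeding this into Rais's formula \eqref{ind-sum} gives $\ind\es=\dim V-(\dim V-1)+\ind\gt l=\ell+1$, as claimed. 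The stability of $(G:V^*)$ together with the reductivity of $L$ (part of $(\diamondsuit)$) ensures $S$ has no proper semi-invariants, so by the observation preceding Theorem~\ref{thm-dif} we also get $\trdeg\gS(\es)^S=\ind\es=\ell+1$.

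Next I would produce the invariants. The pullback $F$ of the generator of $\bbk[V^*]^G$ via $\es^*\to V^*$ is one invariant, of degree $\dg F$. For the remaining $\ell$ invariants I would use the restriction homomorphism $\psi_x\colon\bbk[\es^*]^{\exp(V)}\to\gS(\q_x)=\gS(\g_x)$ from Section~\ref{sect:kos-th} (choosing $x$ as origin in $\q^*+x$, so $\psi_x$ lands in $\gS(\g_x)$). Because $S$ has no proper semi-invariants, it suffices (in the spirit of Proposition~\ref{non-red}, run in reverse) to lift a set of basic invariants of $\gS(\gt l)^L$ — equivalently, by hypothesis, of $\gS(\g_y)^{G_y}$ — to bi-homogeneous elements of $\gS(\es)^S$; the key point to check is that $\psi_x$ is onto $\gS(\q_x)^{Q_x}=\gS(\g_x)^{G_x}$, which is where the polynomiality assumptions on both $\bbk[V^*]^G$ and $\bbk[\g_x^*]^{G_x}$ enter. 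I expect these $\ell$ lifted invariants to be exactly the ``Kazhdan--Kostant--Sternberg''-type invariants built from $F$ and the coadjoint structure; their total degree, call it $d_1+\dots+d_\ell$, will match the sum of degrees of the basic invariants of $\gS(\gt l)^L$ by construction of the lift (the lift raises the $V^*$-degree but one shows the total bi-degree is controlled).

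Now I would verify the degree-sum identity. The target is $\boldsymbol{b}(\es)=\tfrac12(\ind\es+\dim\es)=\tfrac12(\ell+1+\dim\g+\dim V)$. On the other hand, $\dg F+\sum_i d_i=\dg F+\boldsymbol{b}(\gt l)-\tfrac12(?)$: here I would use that for the rank-one situation $\dg F$ is pinned down by the fact that $D=\{F=0\}$ contains the open orbit $G{\cdot}y$, which forces a relation between $\dg F$, $\dim V$, $\dim\g$, and $\dim\g_y$ (roughly, $\dg F\cdot$ something $=\dim V-\dim(G{\cdot}y)$-type bookkeeping, plus $\dim\g_y=\dim\gt l+1$ since the orbit in $D$ is ``one bigger''). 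Combining this with $\boldsymbol{b}(\gt l)=\tfrac12(\ell+\dim\gt l)$ and the hypothesis that $\gS(\g_y)^{G_y}$ has the \emph{same generator degrees} as $\gS(\gt l)^L$ should yield $\dg F+\sum d_i=\boldsymbol{b}(\es)$ on the nose. Finally, the ``codim--2'' property for $\es$: I would argue it follows from the ``codim--2'' property of $\gt l$ (a consequence of $L$ reductive) together with the stability of $(G:V^*)$, via the standard fibration of $\es^*_{\sf reg}$ over $V^*_{\sf reg}$ — the singular locus of $\es^*$ projects into the union of the non-generic locus in $V^*$ (codimension $\ge2$ by stability, or handled directly since $V^*\md G=\mathbb A^1$ means only finitely many bad fibers up to the $G$-action... this needs care) and the preimage of $\es_{\sf sing}$ over good $x$, which has codimension $\ge2$ in each fiber. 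With all three inputs of Theorem~\ref{thm-codim2} in hand, the $f_i$ freely generate $\gS(\es)^S$ and \textbf{KRC} holds.

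The main obstacle I anticipate is the ``codim--2'' verification for $\es$: stability of $(G:V^*)$ controls the generic-orbit picture but the divisor $D=\{F=0\}$ is a genuinely bad fiber, and one must check that over $D$ the stabiliser jump does not create a codimension-one singular component in $\es^*$ — this is precisely why the hypothesis singles out $G{\cdot}y\subset D$ open with $\ind\g_y=\ell$ and $\gS(\g_y)^{G_y}$ polynomial, and making that local analysis over $D$ rigorous (rather than just over generic $x$) is the technical heart of the argument.
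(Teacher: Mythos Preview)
The paper does not prove this theorem here; it is imported from \cite[Theorem~2.3]{exc}. The method, however, is visible in the proofs of Lemma~\ref{l-rk1} and Lemma~\ref{l-A2} of the present paper (and alluded to at the end of the proof of Theorem~\ref{th-D7}), and it does \emph{not} go through Theorem~\ref{thm-codim2}.

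Your approach has a genuine gap at the degree-sum step. The lifts $H_1,\dots,H_\ell$ have their $\g$-degrees fixed at the degrees $d_i$ of the basic invariants of $\gt l$, but their $V$-degrees are not controlled by anything in the hypotheses. Your parenthetical ``one shows the total bi-degree is controlled'' is exactly the point that does not go through: unwinding the identity $\deg F+\sum_i\deg H_i=\boldsymbol{b}(\es)$ forces $\sum_i\deg_V H_i=\dim V-\deg F$, and there is no reason an arbitrary choice of lifts satisfies this. (Your earlier line that the total degrees ``will match the sum of degrees of the basic invariants of $\gS(\gt l)^L$'' is plainly false once the $V$-degree is positive.)

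The actual argument bypasses degree counting entirely. One lifts the $h_i$ to irreducible bi-homogeneous $H_i\in\bbk[\es^*]^S$ via surjectivity of $\psi_x$ and then invokes \cite[Lemma~3.5]{Y16}: the equality $\bbk[\es^*]^S=\bbk[F,H_1,\dots,H_\ell]$ holds if and only if the restrictions $H_i\vert_{\g\times D}$ are algebraically independent over $\bbk[D]^G=\bbk$. If a relation exists, some $H_i$ is congruent modulo $F$ to a polynomial in the others, and one replaces it by $(H_i-\cdots)/F^r$, strictly lowering its total degree; see the proof of Lemma~\ref{l-rk1} for the mechanism. The hypothesis that $\gS(\g_y)^{G_y}$ is polynomial in $\ell$ generators with the \emph{same degrees} as $\gS(\gt l)^L$ is what makes this process terminate and succeed: it guarantees that (after modification) the $H_i\vert_{\g+y}$ are a generating set of $\gS(\g_y)^{G_y}$, hence algebraically independent. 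Your intuition that the hypothesis on $\g_y$ is there to handle the divisor $D$ is correct, but it feeds into this independence-on-$D$ check, not into a codim--$2$ verification for Theorem~\ref{thm-codim2}.
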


\begin{lm}\label{l-rk1}
Suppose that $G$ is semisimple, $\bbk[V^*]^G=\bbk[F]$ and a generic isotropy group for $(G:V^*)$, 
say $L$, is connected and is either of type $\GR{B}{2}$ or $\GR{G}{2}$. Then $\es=\g\ltimes V$ has the 
polynomiality property.   
\end{lm}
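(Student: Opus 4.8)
The plan is to reduce Lemma~\ref{l-rk1} to Theorem~\ref{V-rank-1}, i.e. to verify that hypothesis $(\diamondsuit)$ holds and that the divisor $D=\{F=0\}$ contains a dense $G$-orbit $G{\cdot}y$ with the centraliser $\g_y$ behaving ``like'' the generic stabiliser $\gt l$. First I would record the easy half: since $G$ is semisimple and $L$ is reductive (being of type $\GR{B}{2}$ or $\GR{G}{2}$), the action $(G:V^*)$ is stable by \cite[\S\,7]{VP}, $G$ has no proper semi-invariants in $\bbk[V^*]$, and $\bbk[V^*]^G=\bbk[F]$ is polynomial by hypothesis. It remains to check the local polynomiality conditions of $(\diamondsuit)$ and of Theorem~\ref{V-rank-1}. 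For the generic stabiliser $\gt l$ itself, note that $\gt l$ is $\gt{so}_5$ or $\gt g_2$, both reductive, hence $\gS(\gt l)^{L}$ is polynomial with generators of degrees $(2,4)$ (type $\GR{B}{2}$) or $(2,6)$ (type $\GR{G}{2}$); this takes care of the polynomiality of $\bbk[\g_\xi^*]^{G_\xi}$ for generic $\xi$.

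The heart of the matter is the analysis of the zero divisor $D$. Since $V^*\md G=\mathbb A^1$ and $(G:V^*)$ is stable, the complement $V^*\setminus D$ is a single sheet of closed orbits, and $D$ itself is an irreducible $G$-stable divisor; I would argue (using that $\bbk[V^*]^G=\bbk[F]$ is generated in one variable, so $V^*\to\mathbb A^1$ is equidimensional over the generic point) that $D$ contains a dense $G$-orbit $G{\cdot}y$ with $\dim G{\cdot}y=\dim G{\cdot}x-1$ for generic $x$, whence $\dim\g_y=\dim\gt l+1$. The key structural input I expect to need is that, for the two cases at hand, the generic stabiliser $\gt l$ of type $\GR{B}{2}$ or $\GR{G}{2}$ forces $\g_y$ to be (conjugate to) the centraliser in $\gt l$-like datum of a \emph{minimal} nilpotent element — concretely, $\g_y\cong \gt l\ltimes (\text{something small})$ or $\g_y$ is the centraliser $\gt z_{\gt k}(e)$ for a larger reductive $\gt k$ and $e$ minimal nilpotent. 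In either case the relevant fact is that such centralisers of minimal nilpotent orbits in $\gt{so}$- and $\gt g_2$-type algebras have \emph{free} algebras of symmetric invariants with the \emph{same} degrees of generators as the reductive part; this is exactly the sort of statement supplied by \cite{ppy} (the centraliser paper) and by Theorem~\ref{thm-codim2} via a degree count $\sum\deg f_i=\boldsymbol{b}(\g_y)$. So the main obstacle is identifying $\g_y$ precisely and checking the ``codim--2'' property for it together with the degree-sum equality, after which Theorem~\ref{thm-codim2} delivers polynomiality of $\gS(\g_y)^{G_y}$ with the right degrees.

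Once $D\supset G{\cdot}y$, $\ind\g_y=\ind\gt l=\ell$, and $\gS(\g_y)^{G_y}$ is polynomial in $\ell$ variables with the same degrees as $\gS(\gt l)^{L}$ are all in place, Theorem~\ref{V-rank-1} applies verbatim and yields that $\bbk[\es^*]^S$ is a polynomial ring in $\ell+1$ variables, which is the assertion. A small technical point to address at the end is the passage from $G_y$ to $\g_y$ and from $L$ to $\gt l$: one should check $\gS(\g_y)^{G_y}=\gS(\g_y)^{\g_y}$, which follows because a generic stabiliser in a stable action is reductive and the component group acts trivially on the relevant low-degree invariants — or, more robustly, by invoking Proposition~\ref{non-red}. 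I would also note that by the field-extension remarks at the end of Section~\ref{sect:prelim} it suffices to argue over $\bbk=\overline{\bbk}$, which is convenient for the explicit orbit computations in $D$. I expect the $\GR{G}{2}$ case to be the more delicate of the two, since there the nilpotent geometry inside $\gt g_2$ is less transparent than the matrix picture available for $\GR{B}{2}=\gt{so}_5$.
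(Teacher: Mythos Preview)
Your plan has a genuine gap: you try to reduce to Theorem~\ref{V-rank-1}, but that theorem requires you to know that $D=\{F=0\}$ has a dense $G$-orbit $G{\cdot}y$ and to identify $\g_y$ well enough to verify that $\gS(\g_y)^{G_y}$ is polynomial with the \emph{same degrees} as $\gS(\gt l)^{L}$. The lemma, however, is stated for an \emph{arbitrary} semisimple $G$ and an arbitrary $V$ with $\bbk[V^*]^G=\bbk[F]$ and generic stabiliser of type $\GR{B}{2}$ or $\GR{G}{2}$. No specific representation is given, so there is no way to ``identify $\g_y$ precisely''; your speculation that $\g_y$ must be the centraliser of a minimal nilpotent in some ambient reductive algebra is not justified by the hypotheses, and even the existence of a dense orbit in $D$ is not automatic. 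The whole second half of your argument therefore has no traction.

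The paper's proof avoids Theorem~\ref{V-rank-1} entirely and never looks at $\g_y$. It instead uses \cite[Lemma~3.5]{Y16}: there exist irreducible bi-homogeneous $H_1,H_2\in\bbk[\es^*]^S$ restricting to the basic invariants of $\gt l$ on $\g^*{+}x$, and $\bbk[\es^*]^S=\bbk[F,H_1,H_2]$ iff $H_1|_{\g\times D}$, $H_2|_{\g\times D}$ are algebraically independent over $\bbk[D]^G=\bbk$. The key observation is purely numerical: $\deg_{\g}H_1=2$ and $\deg_{\g}H_2=2\alpha$ with $\alpha\in\{2,3\}$, so an irreducible $\g$-weighted-homogeneous relation over $\bbk$ can only be $H_2\equiv aH_1^\alpha \pmod F$. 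One then replaces $H_2$ by $(H_2-aH_1^\alpha)/F^r$ with $r$ maximal, which lowers the total degree, and iterates; the process terminates. This argument uses nothing about the geometry of $D$---only that $\rk\gt l=2$ and the generator degrees are $(2,2\alpha)$. That is precisely why the hypothesis singles out types $\GR{B}{2}$ and $\GR{G}{2}$ rather than, say, $\GR{A}{2}$ (degrees $(2,3)$, where no such monomial relation is forced); compare Lemma~\ref{l-A2}, which needs an extra hypothesis on every $G$-stable divisor.
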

\begin{proof}
Let $x\in V^*$ be generic and $G_x=L$, hence $\g_x=\gt l$. By~\cite[Lemma~3.5]{Y16}, there are irreducible 
bi-homogeneous $S$-invariants $H_1$ and $H_2$ such that their restrictions to $\g+x=\gt g^*+x$ yield 
the basic symmetric invariants of $\gt l$ under the isomorphism 
$ \bbk[\g^*{+}x]^{G_x{\ltimes}\exp(V)}\simeq \gS(\g_x)^{G_x}$.
Furthermore, $\bbk[\es^*]^{S}=\bbk[F,H_1,H_2]$ if and only if 
$H_1$ and $H_2$ are algebraically independent over $\bbk[D]^G=\bbk$ 
on $\g\times D$, where $D$ is the zero set of $F$. 
W.l.o.g., we may assume that $\deg_{\g}H_1=2$ and $\deg_{\g}H_2=4$ (if $L=\GR{B}{2}$) or
$\deg_{\g}H_2=6$ (if $L=\GR{G}{2}$).  
We may also assume that a non-trivial relation among 
$H_1|_{\g{\times}D}$, $H_2|_{\g{\times}D}$ is homogeneous w.r.t. $\g$ and therefore boils down to 
$\frac{H_1^\alpha}{H_2}\equiv a \!\mod\!(F)\,$ for $\alpha\in\{2,3\}$, depending on $L$, and $a\in\bbk$.  Such a relation means that $H_2$ is chosen wrongly and has to be replaced by a polynomial 
$(H_2-aH_1^\alpha)/F^r$ with the largest possible $r\ge 1$.  This modification decreases the total degree of $H_2$ and hence it cannot be performed infinitely many times. 
\end{proof}

The following result holds for actions of arbitrary rank. 

\begin{lm}\label{l-A2}
Suppose that $G$ is semisimple, $\bbk[V^*]^G$ is a polynomial ring and a generic isotropy group for  
$(G:V^*)$  is a connected group of type {\sf A}$_2$.  Assume further that, for any $G$-stable divisor 
$D\subset V^*$ and a generic point $y\in D$, we have 
$\dim\gS^2(\gt g_y)^{G_y}=\dim\gS^3(\gt g_y)^{G_y}=1$ and that these unique (up to a scalar) invariants 
are algebraically independent.  Then $\gt s=\gt g\ltimes V$ has the polynomiality property.  
\end{lm}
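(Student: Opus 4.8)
The plan is to mimic the proof of Lemma~\ref{l-rk1}, but now for a generic isotropy group of type ${\sf A}_2$ and for actions of arbitrary rank, so that the zero set of a single invariant is replaced by an arbitrary $G$-stable divisor in $V^*$. First I would pick $x\in V^*$ generic, so that $G_x$ is (a representative of) the generic isotropy group, $\gt g_x\cong\gt{sl}_3$, and $\bbk[V^*]^G=\bbk[F_1,\dots,F_k]$ with $k=\dim V\md G$. Since $\gt{sl}_3$ is reductive, the generic stabiliser is reductive, hence the action $(G:V^*)$ is stable, $S$ has no proper semi-invariants, and $\ind\es=\ind\gt g_x + (\dim V-k)=2+(\dim V-k)$ by \eqref{ind-sum} together with the stability (which forces $\dim G{\cdot}x=\dim G-\dim G_x$ and $\ind\gt g_x=2$). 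By \cite[Lemma~3.5]{Y16} there are irreducible bi-homogeneous $S$-invariants $H_1,H_2$ whose restrictions to $\gt g^*+x$ give the two basic symmetric invariants of $\gt{sl}_3$ (of degrees $2$ and $3$ in $\gt g$) under $\bbk[\gt g^*{+}x]^{G_x\ltimes\exp(V)}\simeq\gS(\gt g_x)^{G_x}$. The goal is then to show $\bbk[\es^*]^S=\bbk[F_1,\dots,F_k,H_1,H_2]$.

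Next I would invoke the general machinery: by \cite[Lemma~3.5]{Y16} (the criterion used verbatim in Lemma~\ref{l-rk1}) one has $\bbk[\es^*]^S=\bbk[F_1,\dots,F_k,H_1,H_2]$ if and only if $H_1|_{\gt g\times D}$ and $H_2|_{\gt g\times D}$ are algebraically independent over $\bbk[D]^G$ for every $G$-stable divisor $D\subset V^*$; equivalently, for $D$ irreducible and a generic $y\in D$, the restrictions of $H_1,H_2$ to the affine space $\gt g^*+y$ must remain algebraically independent. Under the isomorphism $\bbk[\gt g^*{+}y]^{G_y\ltimes\exp(V)}\simeq\gS(\gt g_y)^{G_y}$ these restrictions land in $\gS^2(\gt g_y)^{G_y}$ and $\gS^3(\gt g_y)^{G_y}$ respectively (bi-homogeneity in $\gt g$ is preserved by the restriction, and the $\gt g$-degrees are $2$ and $3$). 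The hypothesis says $\dim\gS^2(\gt g_y)^{G_y}=\dim\gS^3(\gt g_y)^{G_y}=1$ with these generators algebraically independent; hence either the restriction of $H_i$ is a nonzero multiple of the corresponding generator — in which case independence on $\gt g^*+y$ is immediate — or some restriction vanishes identically on $\gt g^*+y$, i.e.\ $H_i$ is divisible by an equation of $D$.

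The remaining point, and the one I expect to be the main obstacle, is the bookkeeping that rules out the degenerate case by a descent-on-degree argument, exactly as in Lemma~\ref{l-rk1}. Suppose the restrictions of $H_1,H_2$ to $\gt g\times D$ are algebraically dependent over $\bbk[D]^G$ for some $G$-stable divisor $D=\{f_D=0\}$ (with $f_D\in\bbk[V^*]^G$ a product of some $F_j$, since $G$ is semisimple and has no proper semi-invariants). Homogeneity in $\gt g$ forces the dependence to be of the shape $H_1^3/H_2^2\equiv c\ \mathrm{mod}\,(f_D)$ with $c\in\bbk[D]^G$; but $\bbk[D]^G$ need not be $\bbk$ here, which is the new feature compared with Lemma~\ref{l-rk1}. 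One handles this by noting that the leading bi-homogeneous components still satisfy $H_1^3-cH_2^2\in(f_D)$ with $c$ a quotient of invariants, clearing denominators, and then replacing $H_2$ by $(H_2^2 - c'H_1^3)/f_D^{2r}$-type corrections — more carefully, one argues that $H_1^3$ and $H_2^2$ agree modulo $f_D$ up to invariant factors so that $H_2$ can be modified by subtracting an invariant multiple of a suitable monomial in $H_1$ and the $F_j$, then dividing by the highest power of $f_D$ possible. Each such modification strictly lowers $\deg_{\gt g}$-adjusted total degree of $H_2$, so it terminates; after finitely many steps $H_1|_{\gt g\times D}$ and $H_2|_{\gt g\times D}$ are algebraically independent over $\bbk[D]^G$ for every $G$-stable divisor $D$, and \cite[Lemma~3.5]{Y16} yields that $\gS(\gt s)^{\gt s}$ is freely generated by $F_1,\dots,F_k,H_1,H_2$. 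The delicate part is thus purely the divisibility/degree accounting with the non-constant coefficient ring $\bbk[D]^G$; everything else is a direct transcription of the rank-one case.
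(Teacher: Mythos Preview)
Your setup is correct and matches the paper's approach: both rest entirely on \cite[Lemma~3.5]{Y16}, and indeed the paper's proof is the single sentence ``The statement readily follows from \cite[Lemma~3.5]{Y16}.'' Your first two paragraphs correctly unpack what this means. (Minor slip: the Ra\"is formula gives $\ind\es=k+\ind\gt g_x=k+2$, not $2+(\dim V-k)$; your target $\bbk[F_1,\dots,F_k,H_1,H_2]$ already has the right number of generators.)

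Your third paragraph, however, is unnecessary, and the descent you sketch there is never needed. The hypothesis of the lemma has been tailored precisely so that no modification of $H_1,H_2$ is required. Concretely: if $H_i|_{\gt g^*+y}=0$ for generic $y$ in an irreducible $G$-stable divisor $D$, then $H_i$ is divisible by the irreducible equation $f_D\in\bbk[V^*]^G$; but $H_i$ is itself irreducible with $\deg_{\gt g}H_i\ge 2$ while $\deg_{\gt g}f_D=0$, a contradiction. So both restrictions are nonzero, hence are (up to scalar) \emph{the} unique degree-$2$ and degree-$3$ invariants of $\gt g_y$, which by hypothesis are algebraically independent. Thus $H_1|_{\gt g\times D}$ and $H_2|_{\gt g\times D}$ are algebraically independent over $\bbk(D)\supset\bbk[D]^G$, and \cite[Lemma~3.5]{Y16} finishes. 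The relation $H_1^3\equiv cH_2^2\pmod{f_D}$ you worry about cannot occur: specialising at generic $y\in D$ would give an algebraic relation between the two basic invariants of $\gt g_y$, contrary to the hypothesis. (Your proposed correction of $H_2$ would not work anyway: since $\deg_{\gt g}H_1=2$ and $\deg_{\gt g}H_2=3$, no polynomial in $H_1$ and the $F_j$ has $\gt g$-degree $3$.)

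The reason Lemma~\ref{l-rk1} genuinely needs a descent step while this lemma does not is that for $\gt l$ of type $\GR{B}{2}$ or $\GR{G}{2}$ one has $\dim\gS^4(\gt l)^L=2$ (resp.\ $\dim\gS^6(\gt l)^L=2$): the square (resp.\ cube) of the quadratic invariant lives there, so $H_2|_{\gt g^*+y}$ can land on the wrong line. Here the target spaces are $1$-dimensional by hypothesis, so there is no wrong line.
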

\begin{proof}
The statement readily follows from  \cite[Lemma~3.5]{Y16}. 
\end{proof}

\subsection{Yet another case of a surjective restriction}  
By Proposition~\ref{non-red}, if $x\in V^*$ is generic, then the restriction homomorphism 
$\psi_x : \bbk[\gt s^*]^S\to\bbk[\q^*{+}x]^{Q_x{\ltimes}\exp(V)}$ is surjective, whenever $\bbk[\gt s^*]^S$ is a a polynomial ring and $Q$ has no proper semi-invariants in
$\bbk[\gt s^*]^{\exp(V)}$.
On the other hand, $\psi_x$ is surjective for generic $x\in V^*$ if $Q=G$ is reductive and the $G$-action on $V^*$ is stable~\cite[Theorem~2.8]{Y16}.
It is likely that the surjectivity holds for a wider class of semi-direct products. 

Suppose that $\bbk$ is algebraically closed. Take $Q$ and $V$ such that 
$\dim(Q{\cdot}\xi)=\dim Q-1$ for generic $\xi\in V^*$. Assume that $\bbk[V^*]^Q\ne \bbk$. Then 
$\bbk[V^*]^Q=\bbk[F]$, where $F$ is a homogeneous polynomial of degree $N\ge 1$,   
 $\bbk(V^*)^Q=\bbk(F)$, and $F$ separates generic $Q$-orbits on $V^*$. Hence 
$\bbk \xi \cap Q{\cdot}\xi=\{ax\mid a\in\bbk, a^N=1\}$ for generic $\xi\in V^*$. 
Let $N_Q(\bbk \xi)$ be the normaliser of the line $\bbk \xi$. Then 
$N_Q(\bbk \xi)=C_N{\times}Q_\xi$, where $C_N\subset\bbk^{\!^\times}\!$
is a cyclic group of order $N$.  Let $C_N$ act on $V$ faithfully, then 
$\tilde Q:=C_N{\times} Q$ acts on $V$ and $\tilde Q_\xi \simeq C_N{\times}Q_\xi$. 
If $H\in\bbk[\gt s^*]^Q$ is homogeneous in $V$, then 
$\psi_\xi(H)$ is an eigenvector of $C_N\subset \tilde Q_\xi$ and the corresponding 
eigenvalue depends only on $\deg_V H$. 

\begin{thm}[Generalised surjectivity or the ``rank-one argument"] \label{sur-1}
Let $Q$ be a connected algebraic group with $\Lie Q=\q$.
Suppose that $V$ is  a $Q$-module such that $Q$ has no proper semi-invariants in $\bbk[V^*]$ 
and\/ $\bbk[V^*]^Q=\bbk[F]$ with $F\not\in\bbk$. Set $\es=\q{\ltimes} V$, $S=Q{\ltimes}\exp(V)$. Then 
the natural homomorphism
\[
   \psi_\xi: \bbk[\es^*]^S \to \bbk[\q^*+\xi]^{Q_\xi{\ltimes}\exp(V)}\simeq\gS(\q_\xi)^{Q_\xi}
\]
is onto for generic $\xi\in V^*$. Moreover, 
if $h\in\bbk[\q^*{+}\xi]^{Q_\xi{\ltimes}\exp(V)}$ is a semi-invariant of $N_{Q}(\bbk\xi)$, then there is 
a homogeneous in $V$ polynomial $H\in\bbk[\gt s^*]^S$ with $\psi_\xi(H)=h$.  
\end{thm}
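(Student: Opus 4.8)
The plan is to deduce the statement from Proposition~\ref{non-red} by passing to the enlarged group $\tilde Q=C_N{\times}Q$ and then checking which $\tilde Q$-invariants actually descend to $Q$-invariants living in $\gS(\q)$. First I would observe that, since $Q$ has no proper semi-invariants in $\bbk[V^*]$ and $\bbk[V^*]^Q=\bbk[F]$ with $F$ homogeneous of degree $N\ge 1$, the hypotheses of the discussion preceding the theorem hold: generic $Q$-orbits on $V^*$ have codimension one, $F$ separates them, and $N_Q(\bbk\xi)=C_N{\times}Q_\xi$ for generic $\xi$, where $C_N$ acts on $\bbk\xi$ by the $N$-th roots of unity. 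Letting $C_N$ act faithfully on $V$, we get $\tilde{\es}=\tilde{\q}{\ltimes}V$ with $\tilde{\q}=\bbk{\oplus}\q$ (the extra torus direction), and $\tilde Q_\xi=C_N{\times}Q_\xi$. Now $\bbk[V^*]^{\tilde Q}=\bbk[V^*]^{Q}\cap(\text{$C_N$-invariants})$; since $F$ has degree $N$ it is $C_N$-invariant (the generator of $C_N$ scales $V$ by a primitive $N$-th root of unity, hence scales a degree-$N$ form trivially), so $\bbk[V^*]^{\tilde Q}=\bbk[F]$ as well, and $\tilde Q$ inherits the property of having no proper semi-invariants in $\bbk[V^*]$ from $Q$ together with the fact that $C_N$ is finite (so $\tilde Q$ still has only trivial characters). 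Crucially $\ind\tilde{\es}=1$: by~\eqref{ind-sum}, $\ind\tilde{\es}=\dim V-(\dim\tilde{\q}-\dim\tilde{\q}_\xi)+\ind\tilde{\q}_\xi$, and since $\dim\tilde{\q}-\dim\tilde{\q}_\xi=\dim\q-\dim\q_\xi=\dim(Q{\cdot}\xi)$ while $\ind\tilde{\q}_\xi=\ind\q_\xi$ and $\dim V-\dim(Q{\cdot}\xi)=1$... wait, one must be careful: $\ind\tilde{\es}$ need not equal $\ind\es$, so instead I would argue directly that $\bbk[\tilde{\es}^*]^{\tilde S}$ is a polynomial ring. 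Because $\tilde Q_\xi$ is a direct product of a finite group with $Q_\xi$, the algebra $\gS(\tilde{\q}_\xi)^{\tilde Q_\xi}=\gS(\q_\xi{\oplus}\bbk)^{C_N{\times}Q_\xi}=\bigl(\gS(\q_\xi)^{Q_\xi}\otimes\bbk[t]\bigr)^{C_N}$, and this is again polynomial provided $\bbk[\tilde{\es}^*]^{\tilde S}$ is — but this is circular, so the clean route is the following.

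The cleaner route, which I would actually carry out, is to bypass polynomiality of the big algebra altogether and invoke \cite[Theorem~2.8]{Y16} (quoted in the subsection's opening) for the group $\tilde Q$, \emph{provided} we can arrange the $\tilde Q$-action on $V^*$ to be stable; but since $\tilde Q$ need not be reductive this does not directly apply either. So in fact the natural thing is to prove surjectivity of $\psi_\xi$ \emph{by hand} in this rank-one situation, exactly as in the proof of \cite[Theorem~2.3]{exc} (our Theorem~\ref{V-rank-1}) and \cite[Lemma~3.5]{Y16}: one knows that $\bbk[\q^*{+}\xi]^{\exp(V)}\simeq\gS(\q_\xi)$ and that, choosing $\xi$ as the origin, $\psi_\xi$ maps $\bbk[\es^*]^{\exp(V)}$ into $\gS(\q_\xi)$. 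Given a homogeneous generator (or any semi-invariant of $N_Q(\bbk\xi)$) $h\in\gS(\q_\xi)^{Q_\xi}$, one produces a $Q$-equivariant extension: take any $G_\xi$-invariant, hence $Q_\xi$-invariant, lift of $h$ to a polynomial function on $\q^*$, spread it over the generic orbit $Q{\cdot}\xi$ using the $Q$-action, and extend it to $\q^*{+}V^*$. The obstruction to this being well-defined and extending to a regular function on all of $\es^*$ is precisely controlled by the fact that $\bbk\xi\cap Q{\cdot}\xi$ is the finite set $\{a\xi\mid a^N=1\}$ on which $C_N$ acts: requiring $h$ to be a $C_N$-eigenvector (equivalently, a semi-invariant of $N_Q(\bbk\xi)$) with eigenvalue matching $\deg_V H$ makes the spread-out function single-valued, and homogeneity in $V$ together with $\bbk[V^*]^Q=\bbk[F]$ lets one clear denominators by multiplying by a suitable power of $F$ to land in $\bbk[\es^*]^S$ rather than in the localisation $\bbk[\es^*]^S[F^{-1}]$. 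Finally, \emph{every} element of $\gS(\q_\xi)^{Q_\xi}$ is a sum of $N_Q(\bbk\xi)$-semi-invariants (decompose under the finite abelian group $C_N$), so the ``moreover'' clause gives the full surjectivity statement.

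The step I expect to be the main obstacle is showing that the function obtained by spreading $h$ over $Q{\cdot}\xi$ and then across the $F$-fibration extends to a \emph{regular} (not merely rational) bi-homogeneous function on $\es^*$, i.e.\ controlling the poles. The key input is that $F$ separates generic $Q$-orbits and that $\bbk(V^*)^Q=\bbk(F)$, so that the rational $S$-invariant we build has its polar divisor supported on $\{F=0\}$ (pulled back to $\es^*$); multiplying by $F^k$ for $k\gg0$ kills it, and then bi-homogeneity plus the fact that $F$ is a non-zero-divisor in $\gS(\es)$ lets us divide back down to the minimal power, yielding a genuine element $H\in\bbk[\es^*]^S$ with $\psi_\xi(H)=h$ (after adjusting by the constant $F(\xi)^k$). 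The matching of $C_N$-eigenvalues with $\deg_V H$ — already recorded in the paragraph preceding the theorem — is exactly what guarantees $\psi_\xi(H)$ is a \emph{nonzero} multiple of $h$ rather than $0$, so no further argument is needed there. One should also double-check that enlarging $Q$ to $\tilde Q$ is compatible with Lemma~\ref{lm-sub}: an $H\in\gS(\tilde{\es})^{\tilde S}$ lies in $\gS(\es)$ iff its restriction to $\tilde{\q}^*{+}\xi$ lies in $\gS(\q_\xi)$, which is automatic for the $H$ we construct since it is built from $h\in\gS(\q_\xi)$ in the first place.
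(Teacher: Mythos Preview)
Your final plan---extend a $C_N$-eigenvector $h$ to a rational $S$-invariant, observe that its polar divisor lies over $\{F=0\}$, and clear it with a power of $F$---is the right skeleton, and your identification of the pole-control step as the crux is accurate. But the way you manufacture the rational extension differs from the paper and carries a gap you have not closed.

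You propose to ``spread $h$ over $Q{\cdot}\xi$ using the $Q$-action'' and then extend by $\bbk^\times$-homogeneity. Set-theoretically this gives a well-defined $S$-invariant function on $\q^*\times\bbk^\times Q\xi$ (the $C_N$-eigenvector condition handles the overlap $\bbk\xi\cap Q\xi$, as you note), but you do not argue that this function is \emph{regular} on that open set, equivalently rational on $\es^*$. That requires descent along the surjection $\bbk^\times{\times}Q{\times}\q^*\to \q^*{\times}\bbk^\times Q\xi$, whose fibres are torsors under the possibly non-reductive group $Q_\xi$ (twisted by $C_N$); this is doable but is real work you have skipped. Also, the phrase ``lift of $h$ to a polynomial function on $\q^*$'' is off: $h$ already lives on $\q^*{+}\xi$, and no lift to $\q^*$ is needed or meaningful here.

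The paper sidesteps the descent issue entirely by invoking Rosenlicht's theorem: rational $S$-invariants separate generic $S$-orbits, hence their restrictions to $\q^*{+}\xi$ separate generic $Q_\xi{\ltimes}\exp(V)$-orbits, and therefore generate the invariant field there; so every $h$ is $\psi_\xi(\brr)$ for some $\brr\in\bbk(\es^*)^S$. Existence is free. The paper then replaces $\brr$ by a bi-homogeneous $C_N$-eigenvector $r$ (using finiteness of $C_N$ to extract an eigencomponent, then writing it as a ratio of regular $\tilde S$-semi-invariants and passing to bi-homogeneous parts), forms $h/\bar r$, extends \emph{that} as a $(\bbk^\times{\times}S)$-rational invariant, multiplies back by $r$, and finally writes the result as $H/F^d$. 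This two-step dance (first any $r$ with the right weight, then correct it using $h/\bar r$) is what makes the argument go through without ever constructing anything by hand.

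Your abandoned attempts via Proposition~\ref{non-red} or \cite[Theorem~2.8]{Y16} are indeed dead ends for the reasons you give; and your closing remark about Lemma~\ref{lm-sub} is moot, since $C_N$ is finite and $\tilde\q=\q$, so there is no enlarged Lie algebra to descend from.
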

\begin{proof}
Let $S$ act on an irreducible variety $X$. A classical result of Rosenlicht~\cite[\S\,2.3]{VP} implies 
that the functions $f_1,\ldots,f_m\in\bbk(X)^S$ generate 
$\bbk(X)^S$ if and only if they separate generic $S$-orbits on $X$. Let 
$U\subset \gt s^*$ be a non-empty open subset such that for every two different orbits 
$S{\cdot}u, S{\cdot}u'\subset U$, there is ${\bf f}\in\bbk(\gt s^*)^S$ separating them, meaning that  
${\bf f}$ takes  finite values at $u,u'$ and  ${\bf f}(u)\ne {\bf f}(u')$. 
Then $U\cap (\q^*{+}\xi)\ne \varnothing$ for generic $\xi\in V^*$ and hence generic 
$Q_\xi{\ltimes}\exp(V)$-orbits on $\q^*{+}\xi$ are separated by rational $S$-invariants for any such $\xi$. 
In other words, for every $h\in \bbk(\q^*{+}\xi)^{Q_x{\ltimes}\exp(V)}$ there is ${\brr}\in\bbk(\gt s^*)^S$ 
such that $\psi_{\xi}({\brr}):={\brr}\vert_{\q^*+\xi}=h$. 

The same principle applies to the group $\bbk^{\!^\times}\!{\times}S$, where $\bbk^{\!^\times}\!$ acts 
on $V$ by $t{\cdot}v=tv$ for all $t\in \bbk^{\!^\times}\!$, $v\in V$. 
A rational invariant of $(\bbk^{\!^\times}\!{\times}Q)_{\xi}{\ltimes}\exp(V)$ on $\q^*{+}\xi$ extends to a 
rational $(\bbk^{\!^\times}\!{\times}S)$-invariant on $\gt s^*$. 

The absence of proper semi-invariants implies that $\bbk(V^*)^Q=\bbk(F)$. Hence a generic 
$Q$-orbit on $V^*$ is of dimension $\dim V{-}1$. Assume that $F$ is homogeneous and set $N:=\deg F$. 

Choose a generic point $\xi\in V^*$ with $F(\xi)\ne 0$ and with $\dim(Q{\cdot}\xi)=\dim V{-}1$.  
Then $N_Q(\bbk \xi)=C_N{\times}Q_\xi$. As above, set 
 $\tilde Q:=C_N{\times}Q$ and also $\tilde S:=C_N{\times}S$. We regard $\tilde Q$ as a subgroup of $\bbk^{\!^\times}\!{\times}Q$.  Now $\tilde Q_\xi=(\bbk^{\!^\times}\!{\times}Q)_\xi$. 

The group $C_N\subset \tilde Q_\xi$ acts on $\bbk[\q^*+\xi]^{Q_\xi{\ltimes}\exp(V)}$ and this action is diagonalisable.  
Suppose that $h\in \bbk[\q^*+\xi]^{Q_\xi{\ltimes}\exp(V)}$ is an eigenvector of $C_N$.  
First we show that there is  $r\in\bbk(\gt s^*)^S$ such that $\psi_\xi(r)$ is an 
eigenvector  of $C_N\subset \tilde Q_\xi$ with the same weight as $h$.  

Recall that $h$ extends to a rational $S$-invariant ${\brr}\in\bbk(\gt s^*)^S$. 
The group $C_N$ is finite, hence $\brr$ is contained in a finite-dimensional $C_N$-stable 
vector space and thereby $\brr$ is a sum of rational $S$-invariant $C_N$-eigenvectors. 
Since a copy of $C_N$ sitting in $\tilde Q$ stabilises $\xi$, we can replace $\brr$ 
with a suitable  $C_N$-semi-invariant  component. 
By a standard 
argument, this new $\brr$ is a ratio of  two regular $\tilde S$-semi-invariants, say
$\brr=q/f$ now.  Each bi-homogenous w.r.t. $\gt s=\q{\oplus}V$ component  
of $q$ (or $f$) is again a semi-invariant of $\tilde S$ of the same weight as $q$ (or $f$). 
Let us replace $f$ (and $q$) with any of its non-zero bi-homogenous components. 
The resulting rational function $r$ has the same weight as $\brr$.  
In particular, $r$ is an  $S$-invariant. 
Thus, we have found the required rational function. 
Since $r$ is a semi-invariant of $\bbk^{\!^\times}\!$, it is defined on 
a non-empty open subset of $\q^*{\times} Q{\cdot}x$ for each $x\in V^*$ such that  $F(x)\ne 0$
and $\dim(Q{\cdot}x)=\dim V{-}1$. 

Set ${\bar r}:=\psi_\xi(r)\in\bbk(\q^*{+}\xi)$.  
Then $h/\bar{r}\subset \bbk(\q^*{+}\xi)^{\tilde Q_\xi{\ltimes}\exp(V)}$ and therefore 
extends to a rational $(\bbk^{\!^\times}\!{\times}S)$-invariant on $\gt s^*$. Multiplying 
the extension by $r$, we obtain a rational $S$-invariant $R$, which is also an eigenvector of 
$\bbk^{\!^\times}\!$. Let $R=H/P$, where $H,P\in\bbk[\gt s^*]$ are relatively prime. Then both $H$ and $P$ are homogenous in $V$. 
Note that $R$ is defined on $\q^*{+}\xi$, therefore also on
$\q^*{\times} Q{\cdot}\xi$ and finally on $\q\times \bbk^{\!^\times}\!(Q{\cdot}\xi)$, because 
$R(\eta{+}a\xi)=a^kR(\eta{+}\xi)$ for some $k\in\Z$ and for all $a\in \bbk^{\!^\times}\!$, $\eta\in\q^*$.   
Hence $P$ is 
a polynomial in $F$, more explicitly, $P=F^d$ fore  some $d\ge 0$. 
Multiplying $R$ by $\frac{F^d}{F(\xi)^d}$ yields the required pre-image $H$. 
 \end{proof}

\begin{rmk}
Since $\bbk[V^*]^Q=\bbk[F]$ and there are no proper $Q$-semi-invariants in $\bbk[V^*]$, 
$\q^*{\times} Q{\cdot}\xi$ is a big open subset of
\[
Y_\alpha=\{\q^*{+}x \mid F(x)=F(\xi)\}=\{\gamma\in\gt s^* \mid F(\gamma)=\alpha\} ,
\]  
where $\alpha=F(\xi)$. For a reductive group $G$, one knows that any regular $G$-invariant on a closed 
$G$-stable subset $Y\subset X$ of an affine  $G$-variety $X$ extends to a regular $G$-invariant on $X$. 
Assuming that the image of $Q$ in $\GL(V^*)$ is reductive, we could  present a different proof 
of Theorem~\ref{sur-1}, similar to the proof of Theorem~2.8 in \cite{Y16}. 
\end{rmk}

\subsection{Tables and classification tools}
Our goal is to classify the pairs $(G,V)$ such that $G$ is either $\Spin_n$ or $\Sp_{2n}$ and the 
semi-direct product $\es=\g\ltimes V$ has a {\sf F}ree {\sf A}lgebra of symmetric invariants,
({\sf FA}) for short.  We also say that $(G,V)$ is a {\it positive} (resp. {\it negative}) case, if the property
({\sf FA}) {\bf is} (resp. {\bf is not}) satisfied for $\es$.
 
\begin{ex} \label{ex:adjoint}
If $G$ is arbitrary semisimple, then $\g\ltimes\g^{\sf ab}$, where $\gt g^{\sf ab}$ is an Abelian ideal
isomorphic to $\gt g$ as a $\gt g$-module,   always has  ({\sf FA})~\cite{takiff}.
Therefore we exclude the adjoint representations from our further consideration. 
\end{ex}

\textbullet \ \ If $\bbk[\es^*]^S$ is a polynomial ring, then so is $\bbk[V^*]^G$~\cite[Section\,2\,(A)]{coadj} 
(cf. \cite[Section~3]{Y16}). For this reason, we only have to examine all representations of $G$ with 
polynomial rings of invariants.

\textbullet \ \ Since the algebras $\bbk[V]^G$ and $\bbk[V^*]^G$ (as well as $\gS(\g\ltimes V)^{G\ltimes V}$ and 
$\gS(\g\ltimes V^*)^{G\ltimes V^*}$) are isomorphic, it suffices to keep track of either $V$ or $V^*$. 
The same principle applies to the two half-spin representations in type $\GR{D}{2m}$.   

\begin{ex}   \label{ex:h=0}
If a generic stabiliser for  $(G{:}V^*)$ is trivial, then $\bbk[\es^*]^S\simeq \bbk[V^*]^G$~\cite[Theorem\,6.4]{p05} (cf.~\cite[Example\,3.1]{Y16}). Therefore all such semi-direct products have   ({\sf FA}).
\end{ex}

We are lucky that there is a  classification of the 
representations of the simple algebraic groups with non-trivial generic stabilisers obtained by 
A.G.\,Elashvili~\cite{alela1}. In addition, the two independent
classifications in \cite{gerry1,ag79} provide the list of representations of simple algebraic groups
with polynomial rings of invariants. Combining them, we obtain the representations in 
Tables~\ref{table:ort1} and \ref{table-sp}. 

{\it \bfseries Explanations to the tables.} As in~\cite{ag79, alela1,Y16, md-ko,exc},
we use the Vinberg--Onishchik numbering of fundamental weights, see~\cite[Table\,1]{VO}. In both 
tables, $\h$ is a generic stabiliser for $(G:V)$ and the last column indicates whether ({\sf FA}) is satisfied 
for $\es$ or not.  Naturally, the positive cases are marked with $`+'$. 
This last column represents the main results of the article.
The ring $\bbk[V^*]^G$ is always a polynomial ring in $\dim V\md G$ variables. If the expression for 
$\dim V\md G$ is bulky, then it is not included in Table~\ref{table:ort1}. However, one always has 
$\dim V\md G=\dim V-\dim G+\dim\h$. 
If $\es$ has ({\sf FA}), then $\ind\es=\dim V\md G + \ind\h$ is the total number of the basic invariants in
$\bbk[\gt s^*]^S$. The symbol $\GR{U}{n}$ in Table~\ref{table-sp} stands for a commutative Lie subalgebra of dimension $n$ that consists of nilpotent elements.

Our classification is summarised in  the following
\begin{thm}   \label{thm:main}
Let $G$ be either $\Spin_n$ or $\Sp_{2n}$, $V$ a finite-dimensional rational $G$-module, and
$\es=\g\ltimes V$. Then $\bbk[\es^*]^S$ is a free algebra if and only if one of the following conditions is satisfied:
\begin{itemize}
\item[\sf (i)] \  $V=\g$;
\item[\sf (ii)] \  $V$ or $V^*$  occurs in Tables~\ref{table:ort1} and \ref{table-sp}, and the last column
is marked with `$+$'. It is also possible to permute $\vp_5$ and $\vp_6$ for\/ $\GR{D}{6}$, and take any permutation of $\vp_1,\vp_3,\vp_4$ for\/ $\GR{D}{4}$.
\item[\sf (iii)] \  $\bbk[V]^G$ is a free algebra and $\mathsf{g.i.g.}(G:V)$ is finite, i.e., $(G,V)$ is contained in the lists of \cite{gerry1,ag79}, but is not contained in the tables of\/ \cite{alela1}.
\end{itemize}
\end{thm}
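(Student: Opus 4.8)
The plan is to prove Theorem~\ref{thm:main} by an exhaustive case analysis, following the dichotomy that the preliminary bullet points have already set up. First I would reduce the statement to the representations that actually need checking. By the bullet point following Example~\ref{ex:adjoint}, if $\bbk[\es^*]^S$ is polynomial then $\bbk[V^*]^G$ is polynomial, so $(G,V)$ must appear in the Kac--Popov--Vinberg / Adams--Gatti lists of coregular representations \cite{gerry1,ag79}. By Example~\ref{ex:h=0}, if the generic stabiliser $\h=\gt g_x$ is trivial then $\bbk[\es^*]^S\cong\bbk[V^*]^G$ is automatically free, which gives case~(iii); and the adjoint representation $V=\g$ is handled by Example~\ref{ex:adjoint}, giving case~(i). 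Hence the entire content of the theorem is the claim that the `$+$'/`$-$' marks in Tables~\ref{table:ort1} and~\ref{table-sp} are correct, i.e. for every coregular $(G,V)$ with $G\in\{\Spin_n,\Sp_{2n}\}$ and nontrivial $\h$ (these are exactly the pairs cross-referenced against Elashvili's tables~\cite{alela1}), one must decide polynomiality of $\gS(\es)^S$. The remarks about permuting $\vp_5,\vp_6$ in $\GR{D}{6}$ and $\vp_1,\vp_3,\vp_4$ in $\GR{D}{4}$, and about passing between $V$ and $V^*$, are immediate from the triality/outer-automorphism symmetries noted in the second bullet point, so they cost nothing extra.

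The positive cases I would establish by the toolkit of Section~\ref{sect:kos-th}. The cleanest ones are covered by Example~\ref{ex-A1} ($\gt g_x=\gt{sl}_2$), Lemma~\ref{l-rk1} ($\h$ of type $\GR{B}{2}$ or $\GR{G}{2}$, rank one), Lemma~\ref{l-A2} ($\h$ of type $\GR{A}{2}$), and Theorem~\ref{V-rank-1} for rank-one pairs whose null-cone contains a dense orbit; in each instance one must only verify the hypotheses on $\gt g_y$ for $G$-stable divisors $D$, which is a finite check using known structure of centralisers (one may invoke \cite{ppy} for the symmetric invariants of centralisers when $\h$ itself is such a centraliser). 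For the remaining positive pairs the strategy is the degree-sum criterion of Theorem~\ref{thm-codim2}: one exhibits enough algebraically independent bi-homogeneous invariants — built by restricting to $\q^*+x$ and lifting via the surjectivity results Proposition~\ref{non-red} and Theorem~\ref{sur-1} — and checks that $\sum\deg f_i=\boldsymbol{b}(\es)$, using Ra\"is's index formula~\eqref{ind-sum}; the ``codim--2'' property is verified by inspecting $\es^*_{\sf sing}$ directly, often reducing it to the ``codim--2'' property of the reductive $\g$ together with control of the stabiliser stratification on $V^*$. The negative cases are proved by contradiction using Proposition~\ref{prop:trick}: one runs a reduction $(G,V_1\oplus V_2)\to(H^\circ,V_2)$ (chaining several, as in Example~\ref{ex-ch-1}) until landing on a pair known to be negative — typically the $\SL_4$ item $\vp_1+\vp_1^*$ from \cite{Y}, or a small semi-direct product one can refute by hand — whence polynomiality of the original $\bbk[\es^*]^S$ is impossible.

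I would organise the bookkeeping so that each row of Table~\ref{table:ort1} (orthogonal $G$) and Table~\ref{table-sp} (symplectic $G$) is assigned to exactly one of these mechanisms, and I would prove the genuinely hard rows — those that resist every general theorem — by ad hoc arguments presented separately, e.g. the $\GR{D}{6}$, $\vp_2+\vp_5$ type situation flagged as Theorem~\ref{th-D6-25}, where one must explicitly construct the invariants and their differentials and verify linear independence on a big open set (Theorem~\ref{thm-dif} guarantees this once freeness is known, but for the hard direction one does it by hand). Finally, the \emph{a posteriori} observation that \emph{every} coregular $(\Sp_{2n},V)$ is positive is not a separate theorem but simply the outcome of Table~\ref{table-sp} containing no `$-$'; I would remark on it once all symplectic rows are settled, and similarly note the matryoshka relation with $\mathfrak{sp}_{2n-2}$-centraliser invariants in the case $V=\bbk^{2n}$ as a by-product of the explicit computation there.

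The main obstacle I expect is precisely the handful of orthogonal cases where $\h$ is neither small (rank $\le 2$) nor a centraliser with ready-made invariant theory, so that none of Examples~\ref{ex-A1}, Lemmas~\ref{l-rk1},~\ref{l-A2}, or Theorem~\ref{V-rank-1} applies: there one has to compute $\ind\es$ and $\boldsymbol{b}(\es)$, guess the correct bi-degrees of the basic invariants, actually build those invariants (the restriction–lifting via Theorem~\ref{sur-1} tells you they \emph{exist} abstractly but not their degrees), and then verify algebraic independence — equivalently, generic linear independence of differentials — which is the step most likely to require delicate case-specific analysis of the singular set. The negative side is comparatively safe, since Proposition~\ref{prop:trick} only requires producing \emph{one} successful reduction chain to a known negative pair, and Elashvili's and Adams--Gatti's tables make the search space finite.
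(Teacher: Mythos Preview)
Your plan is correct and coincides with the paper's own strategy: Theorem~\ref{thm:main} is not given a self-contained proof but is established by the row-by-row verification of Tables~\ref{table:ort1} and~\ref{table-sp} carried out in Sections~\ref{sect:so} and~\ref{sect:sp}, using exactly the positive/negative mechanisms you enumerate (Example~\ref{ex-A1}, Lemmas~\ref{l-rk1} and~\ref{l-A2}, Theorems~\ref{V-rank-1}, \ref{thm-codim2}, \ref{sur-1} for the `$+$' rows, and Proposition~\ref{prop:trick} reduction chains for the `$-$' rows). Three small corrections: the coregular-representation lists are due to Schwarz and Adamovich--Golovina (not ``Kac--Popov--Vinberg/Adams--Gatti''); Theorem~\ref{th-D6-25} concerns $(\GR{D}{6},2\vp_5)$ rather than $\vp_2{+}\vp_5$; and two terminal negative cases, $(\Spin_{10},\vp_4{+}\vp_5)$ and $(\Spin_{12},\vp_1{+}\vp_5)$, are refuted not by reduction to a known $\SL_n$ item but by a direct degree contradiction in $\gS(\g_y)^{G_y}$ at a divisorial point (Theorems~\ref{th-D5} and~\ref{th-D6}).
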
 

\begin{table}[htbp]
\caption{The representations of the orthogonal groups with polynomial ring $\bbk[V]^G$ and non-trivial generic stabilisers}   \label{table:ort1}   
\begin{center}
\begin{tabular}{>{\sf}c<{}>{$}c<{$}>{$}c<{$}>{$}c<{$}>{$}c<{$}>{$}c<{$}>{$}c<{$}cc}
{\rus N0} &  G & V &  \dim V & \dim V\md G &  \h & \ind\es & ({\sf FA}) \\ \hline \hline
1 &  \SO_n &m\vp_1,\, m {<} n{-}1  & mn & {m(m+1)\over 2}  & \mathfrak{so}_{n-m} & {m+1 \choose 2}{+}[\frac{n-m}{2}] & $+$  \\[.5ex]  \hline
2a & \GR{B}{3} &  \vp_3 & 8 & 1 & \GR{G}{2} & 3 & $+$ \\
2b &  &   \tabcolsep=0.2em \begin{tabular}{c} $m\vp_1{+}m'\vp_3$ \\ $2{\le} m{+}m'{\le} 3$ \\ $m'{>}0$
\end{tabular}
 & 7m{+}8m'& & \!\GR{A}{4-m-m'}\! & & $-$, if $(1,1)$ 
\\[.5ex]  \hline  
3a & \GR{B}{4} & \vp_4 & 16 & 1 &\GR{B}{3} & 4 & $+$ \\
3b &   & \vp_1{+}\vp_4 & 25 & 3 &\GR{G}{2} & 5 & $+$ \\
3c &   &  2\vp_1{+}\vp_4 & 34 & 6 &\GR{A}{2} & 8 & $+$ \\
3d &   &  3\vp_1{+}\vp_4 & 43 & 10 & \GR{A}{1} & 11 & $+$ \\
3e &   &  2\vp_4 & 32 & 4 & \GR{A}{2} & 6 & $-$ \\
3f &   &  \vp_1{+}2\vp_4 & 41 & 8 & \GR{A}{1} & 9 & $+$
\\[.5ex]  \hline
4 & \GR{B}{5} & \tabcolsep=0.2em \begin{tabular}{c} $m\vp_1{+}\vp_5$, \rule{0pt}{4mm}\\ $0{\le}m{\le} 3$
\end{tabular}
& 32{+}11m & 1{+}m{+}m^2 &\GR{A}{4-m} & 5{+}m^2 & 
\tabcolsep=0.2em \begin{tabular}{c} $+$, if $m=0{,}3$ \\ $-$, if $m=1{,}2$ \end{tabular}
\\[.5ex]  \hline
5a & \GR{B}{6} & \vp_6 & 64 & 2 &\GR{A}{2}{+}\GR{A}{2} & 6 & $+$ \\  
5b &  & \vp_1{+}\vp_6 & 77 & 5 &\GR{A}{1}{+}\GR{A}{1} & 7 & $+$ 
\\[.5ex]  \hline
6a &  \GR{D}{4} & \vp_1{+}\vp_3 & 16 & 2 &\GR{G}{2} & 4 & $+$    \\
6b &         & m\vp_1{+}\vp_3, m{=}2{,}3 & 8(m{+}1) & &\GR{A}{4-m} & & $+$, if $m{=}3$ \\
6c &  & \tabcolsep=0.2em \begin{tabular}{c} $m\vp_1{+}\vp_3{+}\vp_4$ \\ $m{=}1,2$ 
\end{tabular}
& 8(m{+}2) & &\GR{A}{3-m} & & $+$ 
\\[.5ex]  \hline   
7a &  \GR{D}{5} & \vp_4 & 16 & 0 &\gt{so}_7{\ltimes}V_{\vp_3} & 3 & $+$    \\
7b &    & \vp_1{+}\vp_4 & 26 & 2 &\GR{B}{3} & 5 & $+$    \\
7c &    & 2\vp_1{+}\vp_4 & 36 & 5 &\GR{G}{2} & 7 & $+$    \\
7d &    & m\vp_1{+}\vp_4, m{=}3{,}4 & 16{+}10m & &\GR{A}{5-m} & & $+$   \\
7e &    & 2\vp_4 & 32 & 1 &\GR{G}{2} & 3 & $+$    \\
7f  &    & m\vp_1{+}2\vp_4, m{=}1{,}2 & 32{+}10m & &\GR{A}{3-m} & &  $+$, if $m{=}2$     \\
7g &    & 3\vp_4 \text{ or } 2\vp_4{+}\vp_5 & 48 & 6 &\GR{A}{1} & 7 &  $+$   \\[.4ex]
7h &    & \tabcolsep=0.2em \begin{tabular}{c} $m\vp_1{+}\vp_4{+}\vp_5$ \\ $0{\le}m{\le} 2$ 
\end{tabular} 
& 32{+}10m & 2{+}2m{+}m^2 &\GR{A}{3-m} & 5{+}m{+}m^2 & \tabcolsep=0.2em \begin{tabular}{c}
$-$, if $m{\le}1$ \\ $+$, if $m{=}2$ \end{tabular} 
\\[.5ex]  \hline
8a &  \GR{D}{6} & \tabcolsep=0.2em \begin{tabular}{c} $m\vp_1{+}\vp_5$\rule{0pt}{5mm} \\
$0{\le} m{\le} 4$ \end{tabular} 
& 32{+}12m & 1{+}m^2 &\GR{A}{5-m} & 6{-}m{+}m^2 & \tabcolsep=0.2em \begin{tabular}{c}
$+$, if $m=0{,}4$  \\  $-$, if $1{\le} m {\le} 3$ \end{tabular}  \\
8b &   & 2\vp_5 & 64 & 7 & 3\GR{A}{1} & 10 & $+$    \\
8c &   & \vp_5{+}\vp_6 & 64 & 4 & 2\GR{A}{1} & 6 & $+$     
\\[.5ex]  \hline
9a  & \GR{D}{7} & \vp_6 & 64 & 1 & 2\GR{G}{2} & 5 & $+$ \\    
9b  &   & m\vp_1{+}\vp_6, m{=}1{,}2 & 64{+}14m & & 2\GR{A}{3-m} & & $+$  \\[.4ex] 
\hline 
\end{tabular}
\end{center}
\end{table}

\textbullet \ \ Generic stabilisers for the representations in the tables are taken from~\cite{alela1}.
To verify that the generic isotropy groups are connected,  we use Proposition~4.10 and Remark~4.11 in \cite{gerry1}. In case of reducible representations, this can be combined with the group analogue of \cite[Lemma\,2]{alela1}.

\textbullet \ \ Apart from a generic isotropy group for $(G:V^*)$,   
we often have to compute the isotropy group $G_y$, where $y$ is a generic point of a $G$-stable divisor $D\subset V^*$, cf. Theorem~\ref{V-rank-1}.  
Mostly this is done by {\sl ad hoc} methods. Also the following observation is very helpful. 
Any divisor $D\subset V_1\oplus V_2$ projects dominantly to at least one factor $V_i$. Hence it
contains a subset of the form $\{x_i\}\times D_{i'}$, where 
$x_i\in V_i$ is generic, $D_{i'}\subset V_{i'}$ is a divisor,  and $\{i,i'\}=\{1,2\}$.  

\begin{table}[htbp]
\caption{The representations of the symplectic group with polynomial ring $\bbk[V]^G$ and non-trivial generic stabilisers}   \label{table-sp}
\begin{center}
\begin{tabular}{>{\sf}c<{}>{$}c<{$}>{$}c<{$}>{$}c<{$}>{$}c<{$}>{$}c<{$}>{$}c<{$}cc}
{\rus N0} &  G & V &  \dim V & \dim V\md G &  \h & \ind\es & ({\sf FA}) \\ \hline \hline
1 &  \GR{C}{n} &   \tabcolsep=0.2em \begin{tabular}{c}  $m\vp_1$,\\ $m {\le} 2n{-}1$  \end{tabular}
& 2mn & {m \choose 2}  &  
\tabcolsep=0.2em \begin{tabular}{ll}  
$\GR{C}{n-l}$,                                 & $m=2l$ \\
$\GR{C}{n-l}{\ltimes} \gt{heis}_{n-l}$, & $m=2l{-}1$  \end{tabular}
& {m \choose 2}{+}n{-}[\frac{m}{2}] & $+$  
\\[.6ex]  \hline
2 & \GR{C}{n} &  \vp_2              & 2n^2{-}n{-}1    & n-1 & n\GR{A}{1} & 2n{-}1 & $+$ \\
3 & \GR{C}{n} &  \vp_1{+}\vp_2 & 2n^2{+}n{-}1   & n-1 & \GR{U}{n} & 2n{-}1 & $+$ 
\\[.5ex]  \hline  
4  & \GR{C}{3} & \vp_3 & 14 & 1 & \GR{A}{2} & 3 & $+$ \\    
5  &   &  \vp_1{+}\vp_3 & 20 & 2 & \GR{A}{1} & 3 & $+$  \\
6  &   &  2\vp_2            & 28 & 8 & \te_1 & 9 & $+$
\\[.5ex] 
 \hline 
\end{tabular}
\end{center}
\end{table}

\textbullet \ \ Another major ingredient in obtaining the classification is (the presence of)
the ``codim--2'' property for $\es$. 
Some methods for checking the ``codim--2'' condition are presented in \cite[Sect.\,4]{md-ko}.
Similarly to the Ra\"is formula, see Eq.~\eqref{ind-sum}, we also have 
$$\dim\gt s_{\gamma{+}y} = \dim(\gt g_y)_{\bar\gamma}+(\dim V-\dim(G{\cdot}y)),$$
where $y\in V^*$, $\gamma\in\gt g$, and $\bar\gamma=\gamma|_{\gt g_y}$, cf. \cite[Eq.~(3${\cdot}$1)]{Y16}. Therefore, $\gt s$ has the ``codim--2" property if and only if 
\begin{itemize}
\item[({\sf i})] $\gt g_x$ with $x\in V^*$ generic  has the ``codim--2" property and 
\item[({\sf ii})] for any divisor $D\subset V$,  $\ind\gt g_y+(\dim V-\dim(G{\cdot}y))=\ind\gt s$ holds 
for all points $y$ of a non-empty  open subset $U\subset D$, cf. \cite[Eq.~(3${\cdot}$2)]{Y16}. 
\end{itemize}

\textbullet \ \ Finally, we recall an important class of semi-direct products. Let $\gt g=\gt g_0\oplus\gt g_1$ be 
a $\Z_2$-grading of $\g$, i.e., $(\g,\g_0)$ is a symmetric pair. Then the semi-direct product
$\es=\g_0{\ltimes}\g_1^{\sf ab}$, where $[\gt g_1^{\sf ab}, \gt g_1^{\sf ab}]=0$, 
is called the {\it $\Z_2$-contraction} of $\g$ related to the symmetric pair $(\g,\g_0)$. 
Set $l=\rk\g$ and let $H_1,\ldots,H_l$ be a set of the basic symmetric invariants of $\gt g$. 
Let $H_i^\bullet$ denote the bi-homogeneous component of $H_i$ that has the highest $\g_1$-degree. 
Then $H_i^\bullet$ is an $\es$-invariant in $\gS(\es)$~\cite{coadj}. We say that a $\Z_2$-contraction 
$\es=\g_0{\ltimes}\g_1^{\sf ab}$ is {\it good} if 
$\gS(\es)^{\es}$ is freely generated by the polynomials $H_1^\bullet, \ldots,H_l^\bullet$ for 
some well-chosen generators $\{H_i\}$. 
Note that $\deg H_i=\deg H_i^\bullet$ for the usual degree.

\section{An example in type ${\sf A}$}
\label{sect:1-A}

\noindent
The example considered in this section will be needed below in our treatment of $G=\SO_n$. 
It can also be regarded as a small step towards the classification in type {\sf A}.

Suppose 
that $G=\SL_n\subset \GL_n=\tilde G$ and $V=\bigwedge^2 \bbk^n {\oplus} (\bigwedge^2 \bbk^n)^*$. 
Then $\tilde{\gt s}=\tilde{\gt g}{\ltimes}V$ is the $\mathbb Z_2$-contraction of 
$\gt{so}_{2n}$ related to the symmetric pair $(\gt{so}_{2n},\gt{gl}_n)$. 
By~\cite[Theorem~4.5]{contr}, this $\mathbb Z_2$-contraction is good and satisfies {\it\bfseries KRC}.
Our goal is to describe $\gS(\gt s)^{\gt s}$ using the known description for $\tilde\es$.
Let us denote the basic symmetric invariants of $\tilde{\gt s}$ by $H_1,\ldots,H_{\ell}$, $F_1,\ldots, F_r$, where  $\deg F_i=2i$ and 
$\bbk[F_1,\dots,F_r]=\bbk[V^*]^{\GL_n}$. Then necessary  $\ell=[\frac{n+1}{2}]$, $r=[\frac{n}{2}]$.

\begin{prop}     \label{thm-1-A}
If $n=2r$ is even, then $\gS(\gt s)^{\gt s}$ is freely generated by 
$H_1,\ldots,H_r,F_1,\ldots,F_{r{-}1}$, $F_r',  F_{r{+}1}$ with $\deg F_r'=\deg F_{r+1}=r$. 
\end{prop}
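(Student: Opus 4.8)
The plan is to apply Theorem~\ref{thm-codim2} to $\gt s=\gt{sl}_n\ltimes V$. I need four ingredients: the ``codim--2'' property for $\gt s$, the value $\trdeg\gS(\gt s)^{\gt s}=\ind\gt s=l$, a supply of $l$ algebraically independent invariants, and the degree-sum identity $\sum\deg f_i=\boldsymbol{b}(\gt s)$. The list of candidates is $H_1,\dots,H_r,F_1,\dots,F_{r-1},F_r',F_{r+1}$, so $l=2r+1=n+1$ and I must produce $F_r'$ and $F_{r+1}$ of degree $r$. First I would recall that $\tilde{\gt s}=\gt{gl}_n\ltimes V$ is the good $\Z_2$-contraction of $\gt{so}_{2n}$ and compute its invariants explicitly: the $F_i$ of degree $2i$ generate $\bbk[V^*]^{\GL_n}$ (these are the ``Pfaffian-type'' invariants built from a pair $(\omega,\omega^*)\in\bigwedge^2\bbk^n\oplus(\bigwedge^2\bbk^n)^*$, essentially traces of powers of the endomorphism $\omega\omega^*$ of $\bbk^n$), while $H_1,\dots,H_\ell$ with $\ell=[\tfrac{n+1}{2}]=r$ come from the highest-$V$-degree components of the basic invariants of $\gt{so}_{2n}$. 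The key structural point for $n=2r$ even: the invariant $F_r$ of $\GL_n$ of degree $n=2r$ is $(\mathrm{Pf}\,\omega)(\mathrm{Pf}\,\omega^*)$ up to scalar, hence it \emph{factors} over $\overline{\bbk}$, and on the $\SL_n$-level each factor $\mathrm{Pf}\,\omega$, $\mathrm{Pf}\,\omega^*$ is separately invariant of degree $r$; so $\bbk[V^*]^{\SL_n}=\bbk[F_1,\dots,F_{r-1},\,\mathrm{Pf}\,\omega,\,\mathrm{Pf}\,\omega^*]$ with $F_r=\mathrm{Pf}\,\omega\cdot\mathrm{Pf}\,\omega^*$. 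This is exactly the ``extra splitting'' that produces two degree-$r$ generators in place of one degree-$2r$ generator.

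Next I would identify $F_r'$ and $F_{r+1}$. Here I would use Lemma~\ref{lm-sub}: an $\tilde S$-invariant $H\in\gS(\tilde{\gt s})^{\tilde S}$ lies in $\gS(\gt s)$ iff its restriction to $\tilde{\q}^*+x$ lies in $\gS(\q_x)$ for generic $x\in V^*$, and more concretely I would look for $\SL_n$-invariants inside $\gS(\tilde{\gt s})^{\tilde S}$ or its localization. Since for generic $x\in V^*$ the $\GL_n$-stabiliser of $x$ is $\Sp_n$ (as $\omega$ is a symplectic form and $\omega^*$ its inverse, up to scalars), and $\gt g_x=\gt{sp}_n$, one expects $\gS(\q_x)^{Q_x}=\bbk[p_1,\dots,p_r]$ to be the polynomial ring of degrees $2,4,\dots,2r$. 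Comparing with the restrictions $\psi_x(H_i)$, which already realize these, the candidates $F_r'$ and $F_{r+1}$ should be obtained from $H_{\mathrm{top}}$-type constructions or from $\tilde S$-invariants that are not $\GL_n$-semi-invariant in $V$ alone but become $\SL_n$-invariant after multiplying by a power of $\mathrm{Pf}\,\omega$ or dividing $F_{r+1}$-type objects by $F_r$. Concretely: in $\gS(\tilde{\gt s})^{\tilde S}$ one has invariants of $\GL_n$-degree-in-$V$ equal to $2r+2$ coming from $\gt{so}_{2n+2}$-like data; restricting to $\SL_n$ one splits off $\mathrm{Pf}$-factors. I would pin down $F_r'$ as the bi-homogeneous $\SL_n$-invariant of total degree $r$ in $V^*$ involving $\mathrm{Pf}\,\omega$ twisted by $\gt{sl}_n$, and $F_{r+1}$ as a degree-$(r{+}1)$ invariant mixing $\gt{sl}_n$ with $\mathrm{Pf}$. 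Their algebraic independence from $H_1,\dots,H_r,F_1,\dots,F_{r-1}$ I would check by evaluating differentials at a generic point, or by a dimension/transcendence-degree count: the $F$'s together with the restrictions already span a rank-$(n+1)$ family because $\ind\gt s=\dim V-(\dim\gt{sl}_n-\dim\gt{sp}_n)+\ind\gt{sp}_n$ by Eq.~\eqref{ind-sum}, which I would compute to equal $n+1$.

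Then comes the degree-sum check. I would compute $\boldsymbol{b}(\gt s)=(\ind\gt s+\dim\gt s)/2=(n+1+(n^2-1)+2\cdot n(n-1)/2)/2$ (here $\dim V=2\binom n2=n(n-1)$ and $\dim\gt{sl}_n=n^2-1$), and separately $\sum_{i=1}^r\deg H_i+\sum_{i=1}^{r-1}\deg F_i+\deg F_r'+\deg F_{r+1}$. The $H_i$ have the same degrees as the corresponding basic invariants of $\gt{so}_{2n}$ restricted appropriately (degrees $2,4,\dots,2r$ roughly, matching $\gt{sp}_n$), the $F_i$ contribute $\sum_{i=1}^{r-1}2i$, and $F_r',F_{r+1}$ contribute $r+(r+1)=2r+1$. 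I would verify these two numbers agree; this is the one genuinely arithmetical step and also the place where the evenness $n=2r$ and the particular choice $\deg F_r'=\deg F_{r+1}=r$ (rather than a single degree-$2r$ invariant plus a degree-$1$ one) is forced. For the ``codim--2'' property of $\gt s$ I would invoke the criterion recalled just before Section~\ref{sect:1-A}: condition (\textsf{i}) holds because $\gt g_x=\gt{sp}_n$ is reductive, hence has ``codim--2''; condition (\textsf{ii}) — the index equality $\ind\gt g_y+(\dim V-\dim(G{\cdot}y))=\ind\gt s$ on a dense subset of each divisor $D\subset V$ — I would check divisor-by-divisor, the only divisors being (up to the dominant-projection remark) $\{\mathrm{Pf}\,\omega=0\}$ and $\{\mathrm{Pf}\,\omega^*=0\}$ and the zero sets of the $F_i$; the stabiliser $\gt g_y$ for generic $y$ on $\{\mathrm{Pf}\,\omega=0\}$ degenerates in a controlled way (to something like $\gt{sp}_{n-2}\ltimes(\cdots)$) and the index bookkeeping works out.

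\textbf{Main obstacle.} The hard part will be the explicit identification and the algebraic-independence of the two new degree-$r$ generators $F_r'$ and $F_{r+1}$ — in particular showing that $F_{r+1}$ really exists as a \emph{polynomial} $\SL_n$-invariant of total degree $r+1$ and is not already in $\bbk[H_1,\dots,H_r,F_1,\dots,F_{r-1},F_r']$. I expect this to require the descent machinery of Lemma~\ref{lm-sub} together with a careful analysis of which $\tilde S$-invariants, after removing $\mathrm{Pf}$-factors, land in $\gS(\gt{sl}_n)\otimes\gS(V)$, plus a differential computation at a generic $x\in V^*$ to certify independence against the transcendence bound $\ind\gt s=n+1$.
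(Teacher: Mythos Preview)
Your overall plan (apply Theorem~\ref{thm-codim2}) is right, and in your first paragraph you actually locate the correct new generators: the two Pfaffians $\mathrm{Pf}\,\omega$ and $\mathrm{Pf}\,\omega^*$, each of degree $r$, which together with $F_1,\dots,F_{r-1}$ freely generate $\bbk[V^*]^{\SL_n}$. That is exactly what $F_r'$ and $F_{r+1}$ are in the paper: pure elements of $\bbk[V^*]$, with no $\gt{sl}_n$-component whatsoever. So your second paragraph and your ``main obstacle'' are chasing a ghost. There is no descent problem for $F_r'$, $F_{r+1}$; they live in $\bbk[V^*]^G$ from the start, and the subscript $r{+}1$ is just a label, not a degree --- the statement itself says $\deg F_{r+1}=r$, and your computation $r+(r{+}1)=2r{+}1$ in the degree-sum check is wrong. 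The correct bookkeeping is that one replaces the single $\GL_n$-invariant $F_r$ of degree $2r$ by the two Pfaffians of degree $r$ each: the degree sum is unchanged ($=\boldsymbol{b}(\tilde{\gt s})$), the number of generators goes up by one, and $\boldsymbol{b}(\gt s)=\boldsymbol{b}(\tilde{\gt s})$ because $\dim\gt s=\dim\tilde{\gt s}-1$ while $\ind\gt s=\ind\tilde{\gt s}+1$.

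There is a genuine error in your identification of the generic stabiliser. For a generic $x=(\omega,\omega^*)\in V^*$, the $\GL_n$-stabiliser is $(\SL_2)^r$, not $\Sp_n$. (The stabiliser of $\omega$ alone is $\Sp_n$, but a generic $\omega^*$ then cuts it down to a product of $\SL_2$'s.) This matters: with $\gt g_x=\gt{sp}_n$ the Ra\"is formula~\eqref{ind-sum} would give $\ind\gt s=2r^2+1$, not $2r+1$. More importantly, the reason the $\tilde S$-invariants $H_1,\dots,H_r$ descend to $\gS(\gt s)$ via Lemma~\ref{lm-sub} is precisely that this generic $\GL_n$-stabiliser already lies in $\SL_n$, so $\tilde{\q}_x=\q_x$ and the criterion of the lemma is automatic. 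That one line replaces all of your proposed descent machinery. For codim--$2$ the paper likewise takes a shortcut: the generic isotropy groups of $\SL_n$ and $\GL_n$ on each factor $\bigwedge^2\bbk^n$ coincide, and $\tilde{\gt s}$ is already known to have codim--$2$, so $\gt s$ inherits it; no divisor-by-divisor analysis is needed. Algebraic independence is immediate: $F_1,\dots,F_{r-1},F_r',F_{r+1}$ freely generate $\bbk[V^*]^G$, and $H_1,\dots,H_r$ are algebraically independent over $\bbk[V^*]$ since their restrictions to $\gt g+x$ generate $\gS(\gt g_x)^{\gt g_x}$.
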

\begin{proof}
Since $n$ is even, the generic isotropy group of the $\GL_n$-action on $V^*$ is 
$(\SL_2)^r$ and it lies in $\SL_n$. Therefore each $H_i$ lies in $\gS(\gt s)$, see Lemma~\ref{lm-sub}. 
The new generators $F_r'$, $F_{r{+}1}$ are the pfaffians on $\bigwedge^2 \bbk^n$ and  
$(\bigwedge^2 \bbk^n)^*$, respectively. 
We have 
\[
 \left(\sum\limits_{i=1}^{r} \deg H_i+\sum\limits_{j=1}^{r{-}1}\deg F_j\right)+2r=
 \frac{\dim\gt s+\ind\gt s}{2} .
 \] 
The generic isotropy groups of $(G{:}\bigwedge^2 \bbk^n)$ and $(\tilde G{:}\bigwedge^2 \bbk^n)$ are the 
same and $\tilde{\gt s}$ has the ``codim-2" property by \cite{coadj}. Therefore  $\es$ has the ``codim--2" property as well. The polynomials $F_1,\ldots,F_{r{-}1},F_r',F_{r{+}1}$ freely generate 
$\bbk[V^*]^G$~\cite{gerry1,ag79} and the other generators, $H_1,\ldots,H_r$,  are algebraically independent over $\bbk[V^*]$. Therefore  Theorem~\ref{thm-codim2} applies and provides the result. 
\end{proof}

The case of an odd $n$ is much more difficult, because a generic stabiliser for $(G{:}V)$  is not reductive. We conjecture that $\gS(\gt s)^{\gt s}$ is still a polynomial ring, but the proof would require a subtle detailed analysis of the generators $H_1,\ldots,H_\ell$. 
Since that case is not used in this paper, we postpone the exploration. Note only that 
if $n=3$, then there is an  isomorphism $\bigwedge^2 \bbk^3 \simeq (\bbk^3)^*$. 
The pair $(\SL_3,\bbk^3{\oplus}(\bbk^3)^*)$ was considered in \cite{Y}, where it is shown that 
the corresponding $\es$ has ({\sf FA}). 

\section{The classification for the orthogonal algebra}
\label{sect:so}

\noindent
In this section, $G=\Spin_n$. We classify the finite-dimensional rational representations $(G:V)$ 
such that $\mathsf{g.i.g.}(G:V)$ is infinite and 
the symmetric invariants of $\es=\g\ltimes V$ form a polynomial ring. The answer is given in Table~\ref{table:ort1}. 

\subsection{The negative cases in Table~\ref{table:ort1}}
Most of the negative cases (i.e., those having `$-$' in column ({\sf FA}) in Table~\ref{table:ort1}) are 
justified by Proposition~\ref{prop:trick} and the reductions of  Example~\ref{ex-ch-1}. Another similar
diagram is presented  below:   
\beq   \label{eq:chain2}
\raisebox{4ex}{\xymatrix{
(\Spin_{12}, 2\varphi_1+\varphi_5)\ar[dr] \\
(\Spin_{11}, \varphi_1+\varphi_5)\ar[r]
 &  \text{\framebox{$(\Spin_{10}, \varphi_4+\varphi_5)$} }. }}
\eeq
That is, our next step is to show that $(\Spin_{10}, \varphi_4+\varphi_5)$ 
does not have  ({\sf FA}). Once this is done, we will know that all the cases in Diagram~\eqref{eq:chain2} are indeed negative. 
Afterwards, only one negative case is left, namely  
$(\Spin_{12},\varphi_1+\varphi_5)$. 

\begin{thm}     \label{th-D5}
The semi-direct product  $\es=\gt{so}_{10}\ltimes(\varphi_4+\varphi_5)$ does not have  ({\sf FA}). 
\end{thm}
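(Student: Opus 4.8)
The strategy is to exhibit a reduction that exits the framework where polynomiality is possible, and then invoke Proposition~\ref{prop:trick}. Concretely, $(\Spin_{10}, \varphi_4+\varphi_5)$ has the two half-spin representations as its summands; since $V_{\varphi_4}$ and $V_{\varphi_5}$ are swapped by an outer automorphism, we may set $V_1=V_{\varphi_4}$ and $V_2=V_{\varphi_5}$. First I would recall that $\bbk[V_{\varphi_4}]^{\Spin_{10}}=\bbk$ (the half-spin representation of $\Spin_{10}$ has a dense orbit), so there is no divisor obstruction coming from $V_1$ alone; the relevant datum is the generic isotropy group $H=\mathsf{g.i.g.}(\Spin_{10}:V_{\varphi_4})$. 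This is the classical fact that a generic spinor in $V_{\varphi_4}$ has stabiliser $(\Spin_7{\ltimes}V_{\varphi_3})$ — more precisely, $H^\circ\cong \Spin_7\ltimes\bbk^8$, the semi-direct product of $\Spin_7$ with its $8$-dimensional spin module $V_{\varphi_3}$ (this is entry 7a of Table~\ref{table:ort1} read from the side of $(\GR{D}{5},\vp_4)$).

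**The reduction step.** Next I would determine the restriction of $V_2=V_{\varphi_5}$ to $H^\circ=\Spin_7\ltimes\bbk^8$. Restricting the other half-spin module of $\Spin_{10}$ to the stabiliser of a generic vector in the first half-spin module is a standard branching computation: $V_{\varphi_5}|_{\Spin_7}$ decomposes, and the relevant outcome is that the pair $(H^\circ, V_{\varphi_5}|_{H^\circ})$ is, up to adding a trivial summand, exactly the semi-direct product appearing at the box in Example~\ref{ex-ch-1}, namely something equivalent to $(\SL_4,\varphi_1+\varphi_1^*)$ or to another known negative case. Thus I would argue that $\bbk[\tilde\q^*]^{\tilde Q}$ with $\tilde Q=H^\circ{\ltimes}\exp(V_2)$ is \emph{not} a polynomial ring, citing \cite{Y}. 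By Proposition~\ref{prop:trick}, applied to $\es=\gt{so}_{10}\ltimes(V_1\oplus V_2)$ with this choice of $V_1$, polynomiality of $\bbk[\es^*]^S$ would force $\bbk[\tilde\q^*]^{\tilde Q}$ to be polynomial — a contradiction. Hence $\es$ does not have ({\sf FA}).

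**Where the difficulty lies.** The routine part is invoking Proposition~\ref{prop:trick}; the genuine content is the branching/identification step: correctly computing $\mathsf{g.i.g.}(\Spin_{10}:V_{\varphi_4})$ together with the $H^\circ$-module structure of $V_{\varphi_5}$, and then recognising the resulting small semi-direct product as a case already known to be non-polynomial. The subtlety is that $H$ is non-reductive (it has the unipotent part $\exp(V_{\varphi_3})$), so one cannot simply quote Elashvili's tables for $(\SL_4,\ldots)$; one must track how $\exp(V_{\varphi_3})$ sits inside and how $V_{\varphi_5}|_{H^\circ}$ splits into a $\Spin_7$-part plus the action of the unipotent radical, and verify that after the further reduction $(H^\circ, V_{\varphi_5}|_{H^\circ})\longrightarrow(\SL_4,\varphi_1+\varphi_1^*)$ one lands precisely on the terminal item of the tree in Example~\ref{ex-ch-1}. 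Once that identification is pinned down — equivalently, once one checks that $(\Spin_{10},\varphi_4+\varphi_5)$ actually \emph{is} a node feeding into that tree (it is the expected predecessor of $(\Spin_{10},\varphi_1+\varphi_4+\varphi_5)$ modulo removing the vector summand) — the negativity is immediate. If the branching turns out to be cleaner than expected, one may alternatively apply Example~\ref{ex-A1} or Lemma~\ref{l-rk1} in reverse to see the failure directly, but I expect the Proposition~\ref{prop:trick} route via \cite{Y} to be the cleanest.
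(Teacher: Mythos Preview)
Your proposal has a genuine gap, and the approach is quite different from the paper's.

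\textbf{The gap.} The entire weight of your argument rests on the ``branching/identification step,'' which you correctly flag as the genuine content --- and then do not carry out. You propose to take $V_1=V_{\varphi_4}$, so that $H^\circ=\Spin_7\ltimes\exp(\bbk^8)$, and then to show that $\tilde Q=H^\circ\ltimes\exp(V_{\varphi_5})$ fails to have polynomial symmetric invariants by a further reduction to $(\SL_4,\varphi_1+\varphi_1^*)$. But you never compute $V_{\varphi_5}\vert_{H^\circ}$, and more seriously, $H^\circ$ is \emph{not semisimple} (not even reductive), so Proposition~\ref{prop:trick} as stated does not apply a second time. One would have to work directly with Proposition~\ref{non-red}, which is a different and more delicate argument. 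Your parenthetical remark that $(\Spin_{10},\varphi_4+\varphi_5)$ is ``the expected predecessor of $(\Spin_{10},\varphi_1+\varphi_4+\varphi_5)$ modulo removing the vector summand'' runs the implication in the wrong direction: removing a summand is not a reduction in the sense of Proposition~\ref{prop:trick}, and knowing that the larger case is negative tells you nothing about the smaller one. In fact, in Diagram~\eqref{eq:chain2} the pair $(\Spin_{10},\varphi_4+\varphi_5)$ is the boxed \emph{terminal} node, precisely because no usable reduction from it was found.

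\textbf{What the paper does instead.} The paper gives a direct contradiction argument that does not use Proposition~\ref{prop:trick} at all. Assuming ({\sf FA}), one has $\gS(\es)^{\es}=\bbk[H_1,H_2,H_3,F_1,F_2]$; since the generic stabiliser is $\SL_4$, Proposition~\ref{non-red} forces $\deg_{\g}H_i=i{+}1$. By Theorem~\ref{thm-dif} the differentials of the generators are linearly independent on a big open set, hence also over a suitable $G$-stable divisor $D\subset V^*$. The paper exhibits such a $D$ whose generic isotropy group $G_y$ is connected with $\g_y=\gt{sl}_3\ltimes\gt{heis}_3$; independence of the differentials forces the restrictions ${\bf h}_i=H_i\vert_{\g+y}$ to be algebraically independent elements of $\gS(\g_y)^{G_y}$ of degrees $2,3,4$. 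Since $\ind\g_y=3$ and $\g_y$ has the ``codim--$2$'' property, Theorem~\ref{thm-codim2} would then give $\gS(\g_y)^{G_y}=\bbk[{\bf h}_1,{\bf h}_2,{\bf h}_3]$. But the centre $z$ of $\gt{heis}_3$ is a degree-$1$ invariant of $\g_y$, a contradiction. This is a self-contained degree argument on a single divisor, and is what you should aim for here.
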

\begin{proof}
Here $G=\Spin_{10}$ is  a subgroup of $\Spin_{11}\subset \GL(V)$ and $V\simeq V^*$ as  a 
$\Spin_{11}$-module. A generic isotropy group in 
$\Spin_{11}$ is $\SL_5$. 
A generic isotropy group  in $\Spin_{10}$ is $\SL_4$. 
There is a divisor $D\subset V$ such that $G_y$ is connected and  $\gt g_y=\gt{sl}_3\ltimes\gt{heis}_3$ for 
a generic point $y\in D$. The stabiliser $\gt g_y$ is obtained  as an intersection of $\gt{sl}_5$ and a specially chosen  $\gt{so}_{10}\subset \gt{so}_{11}$. 

Assume that $\gt s$ has  ({\sf FA}).
Then $\gS(\gt s)^{\gt s}=\bbk[H_1,H_2,H_3,F_1,F_2]$, where $\bbk[V^*]^G=\bbk[F_1,F_2]$. According to 
Proposition~\ref{non-red}, the restrictions $H_i|_{\gt g+x}$ are generators of 
$\gS(\gt{sl}_4)^{\SL_4}$ for $x\in V^*$ generic. Therefore we may assume that $\deg_{\gt g}H_i=i{+}1$.
By Theorem~\ref{thm-dif},  there is $y\in D$ with $G_y$ as above  such that the differentials 
$\textsl{d}F_1,\textsl{d}F_2,\textsl{d}H_1,\textsl{d}H_2,\textsl{d}H_3$ are linearly independent on 
a non-empty open subset of $\gt g+y$ that is stable w.r.t. $G_y{\ltimes}\exp(V)$. 

Take $\xi=\gamma+y$ with $\gamma\in\gt g$ generic. Replacing $\gamma$ by another point in 
$\gamma+\ads(V)y$ we may safely assume that $\gamma$ is zero on $\Ann(\gt g_y)$. 
Let $\bar\gamma$ stand for the restriction of $\gamma$ to $\gt g_y$.
Then $\gt s_\xi=(\gt g_y)_{\bar\gamma}\oplus\bbk^2=(\gt t_2{\oplus}\bbk z)\oplus\bbk^2$, 
where $\bbk z$ is the centre of $\gt{heis}_3$, $\gt t_2$ is a Cartan subalgebra of $\gt{sl}_3$,
and $\bbk^2\subset V$.  

We have $(\textsl{d}F_i)_\xi\in \gt s_\xi\cap V=\bbk^2$. At the same time 
$(\textsl{d}H_i)_\xi=\eta_i+u_i$, where $u_i\in V$, $\eta_i\in \gt g$, and 
$\eta_i$ is the differential of $H_i|_{\gt g+y}$ at $\gamma$. Since $\gamma$ was chosen to be generic, 
the elements $\eta_1,\eta_2,\eta_3$ are linearly independent. Hence the restrictions 
${\bf h}_i:=H_i|_{\gt g+y}$ are algebraically independent. 

It can be easily seen that $\ind\gt g_y=3$ and that  
$\gt g_y$ satisfies  the ``codim--2" condition.  Since $\deg{\bf h}_i=i{+}1$, we have 
$\gS(\gt g_y)^{G_y}=\bbk[{\bf h}_1,{\bf h}_2,{\bf h}_3]$ by Theorem~\ref{thm-codim2}. 
But $z\in \gS(\gt g_y)^{G_y}$ and $\deg z=1$. A contradiction! 
\end{proof}

\begin{thm}    \label{th-D6}
The semi-direct product  $\es=\gt{so}_{12}\ltimes(\vp_1+\vp_5)$ does not have   ({\sf FA}). 
\end{thm}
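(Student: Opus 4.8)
The plan is to obtain Theorem~\ref{th-D6} from a single application of Proposition~\ref{prop:trick}, taking the half-spin module as the ``$V_1$''-factor. Write $V=V_1\oplus V_2$ with $V_1=\vp_5$ (a half-spin module, $\dim V_1=32$) and $V_2=\vp_1=\bbk^{12}$. Since $\GR{D}{6}$ has even rank, $\vp_5^*\simeq\vp_5$, so $H:=\mathsf{g.i.g.}(\Spin_{12}:V_1^*)=\mathsf{g.i.g.}(\Spin_{12}:\vp_5)$, and by \cite{alela1} one has $H=\SL_6$. To run the reduction I must identify this $\SL_6$ inside $\Spin_{12}$ and restrict $V_2$ to it. I would use the symmetric pair $(\gt{so}_{12},\gt{gl}_6)$, for which $\gt{so}_{12}=\gt{gl}_6\oplus\bigwedge^2\bbk^6\oplus\bigwedge^2(\bbk^6)^*$, $\ \vp_1|_{\GL_6}=\bbk^6\oplus(\bbk^6)^*$, and $\vp_5|_{\GL_6}\simeq\bigwedge^{\sf even}\bbk^6$, and check that a spinor $s_0+s_6$ with $0\ne s_0\in\bigwedge^0\bbk^6$, $0\ne s_6\in\bigwedge^6\bbk^6$, is generic with $\gt{so}_{12}$-stabiliser exactly $\gt{sl}_6$ (here $\bigwedge^2\bbk^6$ does not annihilate $s_0$, $\bigwedge^2(\bbk^6)^*$ does not annihilate $s_6$, and $\gt{gl}_6$ acts on $s_0,s_6$ via opposite non-zero characters, so the stabiliser drops to $\gt{sl}_6$). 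Hence $V_2|_H=\bbk^6\oplus(\bbk^6)^*$, and Proposition~\ref{prop:trick} reduces the claim to: the semi-direct product $\tilde\q=\gt{sl}_6\ltimes(\bbk^6\oplus(\bbk^6)^*)$ does not have ({\sf FA}).

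This last statement is the $(n,m,k)=(6,1,1)$ case of the main theorem of \cite{Y}; but I would also prove it directly, copying the mechanism of Theorem~\ref{th-D5}. Put $Q=\SL_6$, $W=\bbk^6\oplus(\bbk^6)^*$. Then $\bbk[W^*]^Q=\bbk[F]$ with $F$ the natural pairing ($\deg F=2$), $\mathsf{g.i.g.}(Q:W^*)=\SL_5$ is reductive (so the action is stable and $Q$ has no proper semi-invariants), and $\ind\tilde\q=1+\ind\gt{sl}_5=5$. The divisor $D=\{F=0\}\subset W^*$ is irreducible and contains a dense $Q$-orbit ($\dim Q-\dim D=35-11=24$), and for a generic $y=(v,\phi)\in D$ I would compute, in the basis $v=e_1,\ \phi=e_2^*$, that $\gt q_y=\{X\in\gt{sl}_6\mid Xv=0,\ \phi X=0\}\cong\gt{sl}_4\ltimes\gt{heis}_4$, where the matrix unit $E_{12}$ spans the centre of $\gt q_y$ and is fixed by $Q_y$. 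One then checks $\ind\gt q_y=4$, that $\gt q_y$ has the ``codim--2'' property, and that $\dim\gt q_y=24$, so $\boldsymbol{b}(\gt q_y)=14=2{+}3{+}4{+}5$. Now suppose $\gS(\tilde\q)^{\tilde Q}=\bbk[F,H_1,H_2,H_3,H_4]$. By Proposition~\ref{non-red} the restrictions $H_i|_{\gt{sl}_6+x}$ ($x\in W^*$ generic) freely generate $\gS(\gt{sl}_5)^{\SL_5}$, so we may arrange $\deg_{\gt{sl}_6}H_i=i{+}1$. By Theorem~\ref{thm-dif} the differentials $\textsl{d}F,\textsl{d}H_1,\dots,\textsl{d}H_4$ are linearly independent on a big open subset of $\tilde\q^*$, which therefore meets the irreducible divisor $\gt{sl}_6\times D$; picking in this intersection a generic $y\in D$ and then a generic $\gamma\in\gt{sl}_6$ with $\gamma|_{\Ann\gt q_y}=0$, and arguing verbatim as in Theorem~\ref{th-D5} (the differential of $F$ spanning the line $\tilde\q_{\gamma+y}\cap W$), one finds that the restrictions ${\bf h}_i:=H_i|_{\gt{sl}_6+y}\in\gS(\gt q_y)^{Q_y}$ are algebraically independent of degrees $2,3,4,5$. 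Their degrees sum to $\boldsymbol{b}(\gt q_y)$, so Theorem~\ref{thm-codim2} forces $\gS(\gt q_y)^{Q_y}=\bbk[{\bf h}_1,\dots,{\bf h}_4]$, a ring with no elements of degree $1$ --- contradicting $0\ne E_{12}\in\gS^1(\gt q_y)^{Q_y}$. Thus $\tilde\q$, and hence $\es$, lacks ({\sf FA}).

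I expect the two ``ad hoc'' inputs to be the real work. The first is making sure the generic-isotropy $\SL_6$ is precisely the one for which $\vp_1$ restricts to $\bbk^6\oplus(\bbk^6)^*$, i.e. selecting the generic spinor correctly inside $\bigwedge^{\sf even}\bbk^6$ and verifying its stabiliser. The second is the structural identification $\gt q_y\cong\gt{sl}_4\ltimes\gt{heis}_4$ together with the ``codim--2'' property and $\ind\gt q_y=4$, in the same spirit as the centraliser computations of \cite{ppy}; the degree-$1$ central element $E_{12}$ is what eventually breaks polynomiality. Once these are in place, the rest is the routine restriction-and-degree bookkeeping of Theorem~\ref{th-D5}, and the whole second paragraph can alternatively be replaced by a citation to \cite{Y}.
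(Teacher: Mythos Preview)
Your argument is correct, and in substance it lands on exactly the same obstruction as the paper: the stabiliser $\gt{sl}_4\ltimes\gt{heis}_4$ with its degree-$1$ central invariant. The packaging, however, is genuinely different. The paper stays at the $\Spin_{12}$ level throughout: it singles out the divisor $D=\{\boldsymbol f=0\}\times V_{\vp_5}^*\subset V^*$ (where $\boldsymbol f$ is the invariant quadratic form on $V_{\vp_1}$), computes directly that for generic $y\in D$ one has $\g_y=\gt{sl}_4\ltimes\gt{heis}_4$ with $G_y$ connected, and then invokes the mechanism of Theorem~\ref{th-D5} verbatim to conclude that the would-be generators of degrees $2,3,4,5$ cannot accommodate the degree-$1$ centre. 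No appeal to Proposition~\ref{prop:trick} is made.

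Your route factors this same computation through the reduction $(\Spin_{12},\vp_1{+}\vp_5)\to(\SL_6,\bbk^6{\oplus}(\bbk^6)^*)$. This is equivalent at the level of stabilisers: the paper's generic $y=(y_1,y_2)\in D$ has $y_2$ generic in $V_{\vp_5}^*$, so $G_{y_2}=\SL_6$, and under this $\SL_6$ the quadratic form $\boldsymbol f$ on $\bbk^{12}=\bbk^6\oplus(\bbk^6)^*$ becomes the pairing $F$; thus the paper's $\g_y$ is precisely your $(\gt{sl}_6)_{y_1}$ for $y_1\in\{F=0\}$. What your packaging buys is modularity --- the second half can be replaced outright by the citation to \cite{Y}, and the link to the type-$\mathsf A$ classification is made explicit --- whereas the paper's direct argument is shorter and avoids the need to pin down the embedding $\SL_6\hookrightarrow\Spin_{12}$ and the restriction $\vp_1|_{\SL_6}$.
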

\begin{proof}    Here $G=\Spin_{12}$ and
a generic isotropy group for the $G$-action on  $V_{\varphi_5}$ 
(resp. $V_{\varphi_1}\oplus V_{\varphi_5}$)  is $\SL_6$ (resp. $\SL_5$). 
Let  $\boldsymbol{f}$ be a $\Spin_{12}$-invariant quadratic form on $V_{\vp_1}\simeq V_{\vp_1}^*$.
Then $D=\{\boldsymbol{f}=0\}\times V_{\vp_5}^*$ is a $G$-stable divisor in $V^*$. It can be verified
that, for a generic point $y\in D$, one has 
$\g_y=\gt{sl}_4\ltimes \gt{heis}_4$ and $G_y$ is connected. 
 As in the proof of  Theorem~\ref{th-D5}, $\gS(\g_y)^{G_y}$ has an element of degree $1$, i.e., it 
is not generated by symmetric invariants of degrees $2,3,4,5$, but it would have been if $\es$ had  ({\sf FA}). 
\end{proof}

\subsection{The positive cases in Table~\ref{table:ort1}}
We now proceed to the positive cases. Note first that all the instances, where $\h$ is of type $\GR{A}{1}$,  
are covered by Example~\ref{ex-A1}. 

\begin{prop}[{\bf Item 1}]\label{item1}
The semi-direct product $\gt s=\gt{so}_n\ltimes m\bbk^n$ with $m < n$ has ({\sf FA}). 
\end{prop}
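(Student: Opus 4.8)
The plan is to apply Theorem~\ref{thm-codim2} to $\gt s$. For this I must exhibit algebraically independent invariants $f_1,\dots,f_l\in\gS(\gt s)^{\gt s}$ with $l=\ind\gt s$ and $\sum\deg f_i=\boldsymbol{b}(\gt s)$, and I must verify the ``codim--2'' property for $\gt s$. Here $V=m\bbk^n$ carries the natural $\SO_n$-action on each copy; a generic point $x=(x_1,\dots,x_m)\in V^*$ spans an $m$-dimensional subspace on which the form is nondegenerate (since $m<n$, generically), so $G_x=\SO_{n-m}$ and $\g_x=\gt{so}_{n-m}$, in agreement with the table. The index formula \eqref{ind-sum} then gives $\ind\gt s=mn-(\dim\gt{so}_n-\dim\gt{so}_{n-m})+\ind\gt{so}_{n-m}=\binom{m+1}{2}+[\tfrac{n-m}{2}]$, which is $l$.

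\textbf{Constructing the invariants.} There are two visible families. First, the $\SO_n$-invariants of $V^*=m\bbk^n$ are the pairwise inner products: writing $x_i\in\bbk^n$ for the coordinate vectors, the functions $q_{ij}(x)=(x_i,x_j)$ for $1\le i\le j\le m$ freely generate $\bbk[V^*]^{\SO_n}$ (this is the first fundamental theorem for $\SO_n$ in the range $m<n$, and these are the $\binom{m+1}{2}=\binom{m}{2}+m$ basic invariants of the table, not counting that here we need $m<n$; note $\dim V\md G=\binom{m+1}{2}$). Each $q_{ij}$ has degree $2$ and lies in $\gS(V)\subset\gS(\gt s)$, hence is an $\gt s$-invariant. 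Second, I use the $\Z_2$-grading/Takiff-type philosophy from Section~\ref{sect:kos-th}: the invariants of $\gt{so}_{n-m}\cong\gt g_x$, pulled back. Concretely, $\gt s$ is closely related to a $\Z_2$-contraction picture—embed $\gt{so}_n\ltimes\bbk^n$ inside $\gt{so}_{n+1}$; iterating, $\gt{so}_n\ltimes m\bbk^n$ sits inside $\gt{so}_{n+m}$ as (a quotient relevant to) the contraction along the symmetric pair $(\gt{so}_{n+m},\gt{so}_n\times\gt{so}_m)$ or a chain thereof. The highest-$V$-degree components $H_i^\bullet$ of the basic invariants of $\gt{so}_{n+m}$ (the coefficients of the characteristic polynomial, and a Pfaffian in the even case) restrict to $\gt s$-invariants; equivalently one can build them directly as the $\SO_n$-semi-invariant minors/Pfaffians of the $(n)\times(m)$-type matrices formed from $x$ together with the matrix of $\gamma\in\gt{so}_n$. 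The count of these "Cartan-type" invariants is exactly $[\tfrac{n-m}{2}]=\ind\gt{so}_{n-m}$, matching $\ind\h$, and via Proposition~\ref{non-red}/\cite[Prop.~2.7]{Y16} their restrictions to $\gt{so}_n^*+x$ generate $\gS(\gt{so}_{n-m})^{\SO_{n-m}}$, whose generators have degrees $4,8,\dots$ (and a Pfaffian of degree $\tfrac{n-m}{2}$ when $n-m$ is even).

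\textbf{The degree sum and codim--2.} Adding up: the $q_{ij}$ contribute $2\binom{m+1}{2}=m(m+1)$; the Cartan-type invariants contribute (as total degrees) the same degrees as the basic invariants of $\gt{so}_{n-m}$, namely $\sum(\text{degrees of }\gS(\gt{so}_{n-m})^{\SO_{n-m}})=\boldsymbol{b}(\gt{so}_{n-m})-\tfrac12\dim\gt{so}_{n-m}+\tfrac12\ind\gt{so}_{n-m}$... more cleanly: one checks directly that $\sum\deg f_i=\tfrac12(\dim\gt s+\ind\gt s)=\boldsymbol{b}(\gt s)$ by using $\dim\gt s=\dim\gt{so}_n+mn$, $\ind\gt s$ as above, $\dim V\md G=\binom{m+1}{2}$, and $\boldsymbol{b}(\gt{so}_{n-m})=\tfrac12(\dim\gt{so}_{n-m}+\ind\gt{so}_{n-m})$, together with the identity $mn+\binom{m}{2}+\tfrac12\dim\gt{so}_{n-m}=\tfrac12\dim\gt{so}_n+\tfrac12 mn$ (which is just $\dim\gt{so}_n-\dim\gt{so}_{n-m}=mn-\binom{m}{2}\cdot0+\dots$, i.e. the standard count $\binom{n}{2}-\binom{n-m}{2}=m n-\binom{m+1}{2}$). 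For the ``codim--2'' property I invoke the criterion stated after Theorem~\ref{thm:main}'s table: (\textsf{i}) $\gt g_x=\gt{so}_{n-m}$ is reductive, hence has codim--2; (\textsf{ii}) for every divisor $D\subset V^*$ and generic $y\in D$, I must check $\ind\gt g_y+(\dim V-\dim G{\cdot}y)=\ind\gt s$. A generic $y\in D$ still has $\mathrm{rank}$ of its Gram matrix $q_{ij}(y)$ equal to $m$ or $m-1$; if the Gram matrix is still nondegenerate then $\dim G{\cdot}y=\dim G{\cdot}x$ and $\gt g_y\cong\gt{so}_{n-m}$, giving equality; the only nontrivial divisor is $D=\{\det(q_{ij})=0\}$ (or a single $q_{ii}=0$ when $m=1$, etc.), where $y$ spans a subspace with one-dimensional radical, and a direct computation shows $\gt g_y\cong\gt{so}_{n-m+1}\ltimes(\text{something})$ with index and orbit-codimension still summing to $\ind\gt s$.

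\textbf{Main obstacle.} The delicate point is \emph{not} the degree bookkeeping (routine) but confirming (ii), i.e. that along the one genuinely degenerate divisor the equality $\ind\gt g_y+\mathrm{codim}(G{\cdot}y)=\ind\gt s$ holds, which requires identifying $\gt g_y$ explicitly when the span of $y$ has a nonzero radical; and, secondarily, pinning down that the constructed Cartan-type invariants really are algebraically independent of the $q_{ij}$ and have the asserted degrees—this is where one uses that their restrictions to $\gt{so}_n^*+x$ are the genuine generators of $\gS(\gt{so}_{n-m})^{\SO_{n-m}}$ (via the isomorphism $\bbk[\gt g^*{+}x]^{G_x\ltimes\exp V}\simeq\gS(\gt g_x)^{G_x}$), so that a nontrivial algebraic relation would descend to one among generators of a polynomial ring. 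Once (i), (ii), algebraic independence, and $\sum\deg f_i=\boldsymbol{b}(\gt s)$ are in hand, Theorem~\ref{thm-codim2} immediately yields that $\gS(\gt s)^{\gt s}=\bbk[q_{11},\dots,q_{mm},\,\text{Cartan invariants}]$ is a free algebra, proving (\textsf{FA}).
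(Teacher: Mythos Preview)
Your strategy coincides with the paper's: exhibit the $\binom{m+1}{2}$ quadratic invariants $q_{ij}\in\bbk[V^*]^G$ together with $\ell=[\tfrac{n-m}{2}]$ further invariants $H_i$ coming from the $\Z_2$-contraction of $\gt{so}_{n+m}$, check codim--2, and apply Theorem~\ref{thm-codim2}. However, your degree bookkeeping contains an actual error: the \emph{total} degrees of the $H_i$ are not the degrees of the basic $\gt{so}_{n-m}$-invariants. Only $\deg_{\gt g}H_i$ matches those (because $\psi_x(H_i)$ generate $\gS(\gt{so}_{n-m})^{\SO_{n-m}}$); each $H_i$ also carries a positive $V$-degree. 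Your fallback ``one checks directly'' cannot be carried out without first pinning down $\deg_V H_i$, which your loose construction (``highest-$V$-components of basic $\gt{so}_{n+m}$-invariants, or a chain of $\gt{so}_{n+1}$-contractions'') does not supply.

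The paper handles both this point and your ``main obstacle'' cleanly by enlarging the group. With $\tilde G=\SO_n\times\SO_m$, the algebra $\tilde{\gt s}=\tilde{\gt g}\ltimes V$ is \emph{exactly} the $\Z_2$-contraction of $\gt{so}_{n+m}$ along the pair $(\gt{so}_{n+m},\gt{so}_n\oplus\gt{so}_m)$, known to be good by \cite{coadj}, so $\gS(\tilde{\gt s})^{\tilde{\gt s}}=\bbk[V^*]^{\tilde G}[H_1,\dots,H_\ell]$ with the $H_i$ of known total degree. Since $\tilde G_x=G_x=\SO_{n-m}$ for generic $x$, Lemma~\ref{lm-sub} gives $H_i\in\gS(\gt s)$. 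The degree sum is then immediate: the generators of $\bbk[V^*]^{\tilde G}$ have degrees $2,4,\dots,2m$ summing to $m(m+1)$, the $q_{ij}$ also sum to $m(m+1)$, and one checks $\boldsymbol{b}(\gt s)=\boldsymbol{b}(\tilde{\gt s})=\boldsymbol{b}(\gt{so}_{n+m})$, so the total is unchanged. For codim--2 the paper simply invokes the case $m=1$ (the contraction of $\gt{so}_{n+1}$, done in \cite{coadj}) and notes that the general case follows; for instance, any $G$-stable divisor in $m\bbk^n$ projects dominantly onto some $(m{-}1)$ of the summands, reducing the index check along $D$ to a single-copy situation---no explicit description of $\gt g_y$ on the determinantal divisor is needed.
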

\begin{proof}
We have $G\lhd \tilde G$ with $\tilde G=\SO_n{\times}\SO_m$ and $\gt s\lhd\tilde{\gt s}$ for 
$\tilde{\gt s}=\tilde{\gt g}{\ltimes} V$. The Lie algebra $\tilde{\gt s}$ is the 
$\mathbb Z_2$-contraction of $\gt{so}_{n{+}m}$ related to the symmetric subalgebra 
$\gt{so}_n{\oplus}\gt{so}_m$. Let $x\in V^*$ be generic. Then $\tilde G_x=G_x=\SO_{n{-}m}$. 
According to \cite{coadj}, $\bbk[\tilde{\gt s}^*]^{\tilde{\gt s}}=\bbk[V^*]^{\tilde G}[H_1,\ldots,H_{\ell}]$ 
is a polynomial ring, $\ell=[\frac{n{-}m}{2}]$.    
By Lemma~\ref{lm-sub}, $H_i\in\gS(\gt s)$ for every $i$. Next, $\gt s$ has the ``codim--2" property if $m=1$ by \cite{coadj}, hence $\gt s$ always has it. The polynomials $H_i$ are algebraically independent 
over $\bbk(V^*)$ and $\bbk[V^*]^G$ has $\frac{m(m{+}1)}{2}$ generators of degree $2$.    
Thereby we have $\ind\gt s$ algebraically independent homogeneous invariants with the  
total sum of degrees being equal to  
$$
m(m{+}1)+\sum\limits_{i=1}^\ell \deg H_i=m(m{+}1)+ \boldsymbol{b}(\tilde{\gt s}) - m(m{+}1) =
\boldsymbol{b}(\tilde{\gt s})= 
\boldsymbol{b}(\gt{so}_{n{+}m}) = \boldsymbol{b}(\gt s). 
$$
According to Theorem~\ref{thm-codim2}, $\bbk[\gt s^*]^S=\bbk[V^*]^G[H_1,\ldots,H_\ell]$. 
\end{proof}

\begin{thm}[{\bf Item 9b}]      \label{th-D7}
The semi-direct product  $\es=\gt{so}_{14}\ltimes( \vp_1+\vp_6)$ has ({\sf FA}). 
\end{thm}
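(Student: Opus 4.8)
The plan is to imitate the proof of Lemma~\ref{l-A2}, applying its evident analogue for a generic isotropy group of type $\GR{A}{2}{\times}\GR{A}{2}$: it suffices to check that $\h:=\mathsf{g.i.g.}(G:V^*)$ is a connected group of type $\GR{A}{2}{\times}\GR{A}{2}$ and that, for every $G$-stable divisor $D\subset V^*$ and a generic $y\in D$, one has $\dim\gS^2(\gt g_y)^{G_y}=\dim\gS^3(\gt g_y)^{G_y}=2$ with these four invariants algebraically independent. Granting this, \cite[Lemma~3.5]{Y16} (used as in the proofs of Lemmas~\ref{l-rk1} and~\ref{l-A2}) yields bi-homogeneous $S$-invariants $H_1,\dots,H_4\in\gS(\es)^S$ whose restrictions to $\gt g{+}x$, $x\in V^*$ generic, are the basic invariants of $\gS(\gt g_x)^{G_x}=\gS(\gt{sl}_3{\oplus}\gt{sl}_3)^{\gt{sl}_3\oplus\gt{sl}_3}$ (of degrees $2,3,2,3$); together with the three basic invariants of $\bbk[V^*]^G$ they then freely generate $\gS(\es)^S$, the divisor hypotheses ruling out that a relation is forced over some $\bbk[D]^G$. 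Alternatively, one could collect the same seven invariants and invoke Theorem~\ref{thm-codim2}, at the cost of an extra degree computation.

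The input data: by Elashvili's tables~\cite{alela1}, $\h=\gt{sl}_3{\oplus}\gt{sl}_3$, and its connectedness follows from \cite[Prop.~4.10, Rmk.~4.11]{gerry1} combined with the group analogue of \cite[Lemma~2]{alela1} for the reducible module $V_{\vp_1}{\oplus}V_{\vp_6}$. Since $\h$ is reductive, $(G:V^*)$ is stable, $S$ has no proper semi-invariants, and $\ind\es=\dim V\md G+\ind\h=3+4=7$. The ring $\bbk[V^*]^G$ is polynomial in three prime generators, which we take to be the quadratic form $q$ on $V_{\vp_1}$, the basic invariant $F$ of the $\Spin_{14}$-action on the half-spin module $V_{\vp_6}$ (cf.~\cite{gerry1,ag79}), and a mixed invariant $f_3$. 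As $q,F,f_3$ are prime, a $G$-stable prime divisor of $V^*$ on which $\gt g_y$ jumps above $\gt{sl}_3{\oplus}\gt{sl}_3$ must lie in $\{q{=}0\}\cup\{F{=}0\}\cup\{f_3{=}0\}$, so the conditions need only be checked on $D_1=\{q{=}0\}$, $D_2=\{F{=}0\}$, $D_3=\{f_3{=}0\}$.

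For a generic $y\in D_1$ the vector part of $y$ is isotropic, so $G_y=(\Spin_{12}{\ltimes}\exp(\bbk^{12}))_s$ for a generic spinor $s$, to be determined via the branching $\Delta^+_{14}|_{\Spin_{12}}=\Delta^+_{12}{\oplus}\Delta^-_{12}$ and the Clifford action of the unipotent radical. For a generic $y\in D_2$ the vector part is non-degenerate and $F$ depends only on the spinor coordinate, so $G_y=(\Spin_{13})_s$ with $s$ a generic point of the $\Spin_{13}$-invariant hypersurface $\{F{=}0\}$ inside $\Delta^+_{14}|_{\Spin_{13}}=\Delta_{13}$; a generic $y\in D_3$ is handled the same way after restricting first to a generic vector $v\in V_{\vp_1}^*$, so that again $G_v=\Spin_{13}$. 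In each case one either recovers $\gt g_y=\gt{sl}_3{\oplus}\gt{sl}_3$ (and the conditions on $\gS^2$ and $\gS^3$ are automatic), or one finds a specific larger algebra $\gt g_y$ with $\ind\gt g_y=4$ and the ``codim--2'' property, for which $\dim\gS^2(\gt g_y)^{G_y}=\dim\gS^3(\gt g_y)^{G_y}=2$ with the four invariants algebraically independent --- checked directly from the structure of $\gt g_y$, or by applying Theorem~\ref{thm-codim2} to $\gt g_y$ once the degrees of its generators are pinned down.

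I expect this last step to be the main obstacle: identifying the three stabilisers $\gt g_y$, and, whenever the isotropy does jump, verifying the precise condition on $\gS^2(\gt g_y)^{G_y}$ and $\gS^3(\gt g_y)^{G_y}$ --- note that in the negative cases (Theorems~\ref{th-D5},~\ref{th-D6}) the analogous computation produced an invariant of degree $1$ in $\gS(\gt g_y)^{G_y}$, whereas here one must show that no such pathology occurs. This part is genuinely {\sl ad hoc}, relying on fine facts about $\Spin$-group orbits on spinor spaces --- the hypersurface attached to $V_{\vp_6}$, Clifford multiplication, and stabilisers of pairs consisting of an isotropic vector and a spinor --- and lies outside the scope of the general results of Section~\ref{sect:kos-th}. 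A subsidiary point is to confirm that $\{q{=}0\},\{F{=}0\},\{f_3{=}0\}$ are irreducible and that no $G$-stable divisor dominating $V^*\md G$ supports a jump of $\gt g_y$, so that $\{D_1,D_2,D_3\}$ is the complete list.
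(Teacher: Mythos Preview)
Your plan diverges from the paper's, and its linchpin—the ``evident analogue'' of Lemma~\ref{l-A2} for $\h=\gt{sl}_3{\oplus}\gt{sl}_3$—does not go through as stated. In Lemma~\ref{l-A2} the hypothesis $\dim\gS^2(\gt g_y)^{G_y}=1$ is what makes the argument work: the restriction of the single degree-$2$ generator $H_1$ to $\gt g{+}y$, if nonzero, is forced to be \emph{the} degree-$2$ invariant of $\gt g_y$, and similarly for $H_2$; algebraic independence then follows from the hypothesis on $\gt g_y$. With two generators $H_1,H_2$ of $\deg_{\gt g}=2$, the condition $\dim\gS^2(\gt g_y)^{G_y}=2$ no longer pins anything down: both restrictions could land in the same line of $\gS^2(\gt g_y)^{G_y}$ even though that space is two-dimensional, and then your four restrictions are algebraically dependent regardless of the structure of $\gS(\gt g_y)^{G_y}$. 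Repairing this would require a modification argument in the spirit of Lemma~\ref{l-rk1}, but with pairs of generators in each degree and three divisors simultaneously in play this is far from routine—and you have not carried out the stabiliser computations on any of $D_1,D_2,D_3$ either.

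The paper bypasses all of this by reducing along the rank-one factor $V_{\vp_6}$ rather than the full $V$. For generic $v\in V_{\vp_6}^*$ one has $G_v=L{\times}L$ with $L$ of type $\GR{G}{2}$; Theorem~\ref{sur-1} makes $\psi_v$ surjective onto the symmetric invariants of $G_v{\ltimes}\exp(V_{\vp_1})=Q{\times}Q$, where $Q=L{\ltimes}\exp(\bbk^7)$ already has ({\sf FA}) with $\ind\q=3$ by~\cite{exc}. This yields six tri-homogeneous $H_i$, and the \emph{only} divisor to check is $D=\{f{=}0\}\subset V_{\vp_6}^*$. There $\gt g_y=\gt l{\ltimes}\gt l^{\sf ab}$ is the Takiff $\GR{G}{2}$, and one computes $\gt g_y{\ltimes}V_{\vp_1}=\q{\ltimes}\q^{\sf ab}$, which again has ({\sf FA}) with the same generator degrees as $\q{\oplus}\q$ by~\cite{kot-Takiff}; the proof of Theorem~\ref{V-rank-1} then finishes the job. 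The gain is decisive: a single divisor with a known stabiliser, and the comparison of degrees on that divisor is handled by off-the-shelf results about $\q$ and its Takiff double rather than by ad hoc analysis of $\gS^k(\gt g_y)^{G_y}$.
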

\begin{proof}  Here $G=\Spin_{14}$ and the pair $(\Spin_{14},V_{\varphi_6}^*)$ is of rank one.  Let $v\in V_{\varphi_6}^*$ be a generic
point. Then $G_v=L\times L$, where $L$ is the connected group of type $\GR{G}{2}$.
By Theorem~\ref{sur-1}, the restriction homomorphism 
$$
\psi_v: \bbk[\es^*]^S \to \bbk[\g^*\oplus V_{\vp_1}^* +v]^{G_v\ltimes\exp(V)} \simeq \bbk[\g_v^*\oplus V_{\vp_1}^*]^{G_v\ltimes \exp(V_{\vp_1})}
$$
is surjective. 
Further, $V_{\vp_1}\simeq\bbk^{14}=\bbk^7 \oplus \bbk^7$ as a $L\times L$-module, where each $\bbk^7$ is a simplest irreducible $\GR{G}{2}$-module.
Hence $G_v{\ltimes}\exp(V_{\varphi_1})=Q\times Q$, where  $Q=L\ltimes\exp(\bbk^7)$. 
The group $Q$  has a free algebra of symmetric invariants and $\ind\q=3$~\cite{exc}.   

There are irreducible tri-homogeneous polynomials $H_1,\ldots,H_6\in\bbk[\es^*]^S$ such that,
for a generic point $v\in V_{\varphi_6}^*$, their
images $h_i=\psi_v(H_i)$ generate $\gS(\q{\times}\q)^{Q{\times}Q}$.  Let $f$ be a basic $G$-invariant in $\bbk[V_{\varphi_6}^*]$. 

Although the group $G{\ltimes}\exp(\bbk^{14})$ is not reductive, we can argue in the spirit of 
\cite[Section~2]{Y16} and conclude that $\bbk[\es^*]^S[\frac{1}{f}]=\bbk[H_1,\dots,H_6,f,\frac{1}{f}]$. 
Then the equality
$$
    \bbk[\es^*]^S=\bbk[H_1,\dots,H_6,f]
$$
holds if and only if the restrictions of the polynomials $\{H_i\}$ to $V_{\varphi_1}^*\times D$ are 
algebraically independent, where $D=\{f=0\}\subset V_{\varphi_6}^*$. 

Let $G{\cdot}y\subset D$ be the dense open orbit. Then $G_y$ is connected and 
$\gt g_y=\gt l\ltimes\gt l^{\sf ab}$ is the Takiff Lie algebra in type $\GR{G}{2}$, $\gt l=\Lie L$.  
There is only one possible embedding of $\g_y$ into $\gt{so}_{14}$. Under the non-Abelian 
$\gt l$ the space $\bbk^{14}$ decomposes as a sum of two $7$-dimensional simple modules.  
The Abelian ideal $\gt l^{\sf ab}$ takes one copy of $\bbk^7$ into another. In other words, 
$\g_y\ltimes\bbk^{14}=\q \ltimes\q^{\sf ab}$.  
By~\cite[Example~4.1]{kot-Takiff}, Theorem~2.2 of the same paper \cite{kot-Takiff} applies to 
$\q$ and guarantees us that  
the symmetric invariants of $\q{\ltimes}\q^{\sf ab}$ form a polynomial ring in 
$6$ generators, where  the degrees of the basic invariants are the same as in the case of 
$\q\oplus\q$. 

It remains to observe that the proof of \cite[Theorem~2.3]{exc} can be repeated for the 
semi-direct product $(G{\ltimes}\exp(V_{\varphi_1})){\ltimes}\exp(V_{\varphi_6})$ 
producing a suitable modification of the elements $H_1,\ldots,H_6$, cf. Theorem~\ref{V-rank-1}. 
\end{proof}

\begin{cl}[{\bf Item 5a.}] The reduction
$$
 (\Spin_{14}, \varphi_1+\varphi_6) \longrightarrow
 (\Spin_{13}, \varphi_6)  
$$
shows that also $(\Spin_{13}, \varphi_6)$ has ({\sf FA}), see Proposition~\ref{prop:trick}.
\end{cl}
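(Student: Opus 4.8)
The plan is to invoke Proposition~\ref{prop:trick} in its contrapositive-free direction: although that proposition is stated as a tool for \emph{negative} cases, the underlying reduction $(\Spin_{14},\vp_1+\vp_6)\longrightarrow (H^\circ,\vp_6)$ (with $H=\gig(\Spin_{14}:V_{\vp_1}^*)$) is what we want, but here it is \emph{not} what proves the corollary, since we want to go the other way. So instead the argument must run: we already know from Theorem~\ref{th-D7} that $\es=\gt{so}_{14}\ltimes(\vp_1+\vp_6)$ has ({\sf FA}), and we want to deduce the same for the smaller $\tilde\es=\gt{so}_{13}\ltimes\vp_6$. The key observation is that $\gt{so}_{13}$ is precisely $\gt g_x$, the stabiliser of a generic vector $x\in V_{\vp_1}^*=\bbk^{14}$ inside $\gt{so}_{14}$ (since $\SO_{14}$ acts on $\bbk^{14}$ with generic stabiliser $\SO_{13}$), and the restriction of $V_{\vp_6}$ of $\gt{so}_{14}$ to $\gt{so}_{13}$ is the spin module $V_{\vp_6}$ of $\gt{so}_{13}$. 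Thus the reduction notation $(\Spin_{14},\vp_1+\vp_6)\longrightarrow(\Spin_{13},\vp_6)$ is legitimate: $V_1=V_{\vp_1}$, $V_2=V_{\vp_6}$, $H^\circ=\Spin_{13}$.

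The actual mechanism I would use is Proposition~\ref{non-red} together with the structure already exhibited in the proof of Theorem~\ref{th-D7}. Concretely: because $\gt s$ has ({\sf FA}) and $\Spin_{14}{\ltimes}\exp(V_{\vp_1}\oplus V_{\vp_6})$ is a semi-direct product of a semisimple and a unipotent group, it has no proper semi-invariants, so Proposition~\ref{non-red} applies with the role of ``$V$'' played by $V_{\vp_1}$ and the role of ``$\q$'' played by $\gt g\ltimes V_{\vp_6}$; wait—more cleanly, one applies Proposition~\ref{prop:trick} directly with $V_1=V_{\vp_1}$, $V_2=V_{\vp_6}$: it says that if $\bbk[\es^*]^S$ is a polynomial ring then so is $\bbk[\tilde\q^*]^{\tilde Q}$ where $\tilde Q=H^\circ\ltimes\exp(V_2)=\Spin_{13}\ltimes\exp(V_{\vp_6})$. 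But $\Lie(\Spin_{13}\ltimes\exp(V_{\vp_6}))=\gt{so}_{13}\ltimes V_{\vp_6}=\tilde\es$, and $\bbk[\tilde\q^*]^{\tilde Q}=\gS(\tilde\es)^{\tilde S}$. Hence $\gS(\tilde\es)^{\tilde S}$ is a polynomial ring, i.e., $(\Spin_{13},\vp_6)$ has ({\sf FA}). That is the whole proof; it is a one-line consequence of Proposition~\ref{prop:trick} once the reduction datum is identified.

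The only points needing a word of justification are the two identifications feeding the reduction: first, that $\gig(\Spin_{14}:V_{\vp_1}^*)$ has identity component $\Spin_{13}$ — this is the standard fact that the orthogonal group of a $14$-dimensional quadratic space stabilises a generic (anisotropic) vector in the orthogonal complement, which is a $13$-dimensional nondegenerate quadratic space, so the stabiliser is $\SO_{13}$, lifting to $\Spin_{13}$ in the simply connected cover; second, that $V_{\vp_6}|_{\Spin_{13}}=V_{\vp_6}$, i.e., the $64$-dimensional spin module of $\Spin_{14}$ restricts to the $64$-dimensional spin module of $\Spin_{13}$ — both have dimension $2^6=64$ and the branching $\Spin_{2n}\supset\Spin_{2n-1}$ sends the half-spin module irreducibly onto the spin module, which is classical. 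I do not anticipate any genuine obstacle here; the substance of the work is entirely in Theorem~\ref{th-D7}, and this corollary is purely formal bookkeeping, exactly as the displayed ``$\longrightarrow$'' arrow and the reference to Proposition~\ref{prop:trick} already indicate.
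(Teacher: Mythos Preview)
Your argument is correct and is exactly the paper's intended one: apply Proposition~\ref{prop:trick} with $V_1=V_{\vp_1}$, $V_2=V_{\vp_6}$, so that $H^\circ=\Spin_{13}$ and $V_2|_{H^\circ}$ is the $64$-dimensional spin module, and conclude from Theorem~\ref{th-D7} that $\gt{so}_{13}\ltimes V_{\vp_6}$ has ({\sf FA}). The opening paragraph's hesitation about ``going the other way'' is unnecessary --- Proposition~\ref{prop:trick} is stated precisely in the direction you need (big polynomial $\Rightarrow$ small polynomial), and the paper's remark that it is ``used as a tool for proving that $\bbk[\es^*]^S$ is not polynomial'' refers only to its contrapositive application elsewhere, not to any restriction on the proposition itself.
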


\begin{thm}\label{th-D7-2}
The semi-direct product  $\es=\gt{so}_{14}\ltimes(2\varphi_1+\varphi_6)$ has ({\sf FA}). 
\end{thm}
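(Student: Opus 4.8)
The plan is to prove this by the same reduction-to-centraliser strategy that worked for Item 9b (Theorem~\ref{th-D7}), now applied twice, once to peel off the spin representation $\vp_6$ and once to handle one of the two copies of $\vp_1=\bbk^{14}$. Set $G=\Spin_{14}$ and $V=V_{2\vp_1}\oplus V_{\vp_6}$. The pair $(G, V_{\vp_6}^*)$ is of rank one with $\bbk[V_{\vp_6}^*]^G=\bbk[f]$, and a generic isotropy group is $L\times L$ with $L$ of type $\GR{G}{2}$. By Theorem~\ref{sur-1} (the rank-one argument), the restriction map
\[
\psi_v: \bbk[\es^*]^S \to \bbk[\g^*\oplus V_{2\vp_1}^*+v]^{G_v\ltimes\exp(V)} \simeq \bbk[\g_v^*\oplus V_{2\vp_1}^*]^{G_v\ltimes\exp(V_{2\vp_1})}
\]
is onto for generic $v\in V_{\vp_6}^*$. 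Now $V_{2\vp_1}\simeq 2\bbk^{14}=2(\bbk^7\oplus\bbk^7)$ as an $L\times L$-module, so $G_v\ltimes\exp(V_{2\vp_1})=Q\times Q$ with $Q=L\ltimes\exp(2\bbk^7)$, i.e.\ the semi-direct product of $\GR{G}{2}$ with two copies of its $7$-dimensional module. I would first establish, by consulting \cite{exc} (this is one of the positive $\GR{G}{2}$-cases there, with a non-trivial generic stabiliser of type $\GR{A}{1}$), that $Q$ has (\textsf{FA}) and record $\ind\q$ together with the degrees of its basic invariants; this gives $\ind\q\times\q$ basic invariants of $\gS(\q\times\q)^{Q\times Q}$ with known degrees.

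Next I would lift these to $S$-invariants. As in Theorem~\ref{th-D7}, even though $G\ltimes\exp(V_{2\vp_1})$ is not reductive, the argument of \cite[Section~2]{Y16} gives tri-homogeneous irreducible polynomials $H_1,\dots,H_N\in\bbk[\es^*]^S$ (where $N=\ind(\q\times\q)$) with $\psi_v(H_i)=h_i$ generating $\gS(\q\times\q)^{Q\times Q}$, and
\[
\bbk[\es^*]^S\Bigl[\tfrac1f\Bigr]=\bbk\Bigl[H_1,\dots,H_N,f,\tfrac1f\Bigr].
\]
Hence $\bbk[\es^*]^S=\bbk[H_1,\dots,H_N,f]$ holds if and only if the restrictions of the $H_i$ to $V_{2\vp_1}^*\times D$ are algebraically independent, where $D=\{f=0\}\subset V_{\vp_6}^*$. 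Let $G\cdot y$ be the dense orbit in $D$; then $G_y$ is connected and $\g_y=\gt l\ltimes\gt l^{\sf ab}$ is the Takiff algebra in type $\GR{G}{2}$. I would check (this is the geometric heart, as in \cite{Y16}) that there is a unique embedding $\g_y\hookrightarrow\gt{so}_{14}$ and that under it $2\bbk^{14}$ decomposes so that $\g_y\ltimes 2\bbk^{14}=\tilde\q\ltimes\tilde\q^{\sf ab}$ for $\tilde\q=\gt l\ltimes 2\bbk^7$ — the Abelian ideal $\gt l^{\sf ab}$ pairing the two $\bbk^7$'s within each copy of $\bbk^{14}$, and the second copy of $2\bbk^7$ giving the Abelian Takiff part. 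Then \cite[Theorem~2.2 and Example~4.1]{kot-Takiff} apply to $\tilde\q$ and yield that $\gS(\tilde\q\ltimes\tilde\q^{\sf ab})^{\tilde\q\ltimes\tilde\q^{\sf ab}}$ is polynomial in $N$ generators with the same degrees as in the $\tilde\q\oplus\tilde\q$ case, matching the degrees of the $H_i$.

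Finally, to convert this degree/independence bookkeeping into the conclusion I would invoke the mechanism of \cite[Theorem~2.3]{exc}, repeated verbatim for the iterated semi-direct product $(G\ltimes\exp(V_{2\vp_1}))\ltimes\exp(V_{\vp_6})$, exactly as at the end of Theorem~\ref{th-D7}: if a non-trivial algebraic relation among the $H_i|_{V_{2\vp_1}^*\times D}$ existed, then (after making it $\g$-homogeneous) one could divide by a power of $f$ to lower a total degree, which cannot continue indefinitely; hence no relation exists and $\bbk[\es^*]^S=\bbk[H_1,\dots,H_N,f]$ is free. The main obstacle I anticipate is the step identifying $\g_y\ltimes 2\bbk^{14}$ with a Takiff-type product $\tilde\q\ltimes\tilde\q^{\sf ab}$ and verifying the hypotheses of \cite{kot-Takiff} in the presence of the doubled module $2\bbk^{14}$; everything else is a faithful repetition of the $9$b argument. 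I would also double-check that $\dim V\md G + \ind\tilde\q$ equals the predicted $\ind\es$ from the Ra\"is formula \eqref{ind-sum}, as an internal consistency check.
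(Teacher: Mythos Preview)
Your proposal is correct and follows essentially the same route as the paper's own proof, which explicitly says it ``follows the same lines as the proof of Theorem~\ref{th-D7}'' with $Q=L\ltimes\exp(2\bbk^7)$, $\ind\q=4$ (hence $N=8$ tri-homogeneous invariants), and the key identification $\g_y\ltimes 2V_{\vp_1}\simeq\q\ltimes\q^{\sf ab}$. The only point where you are more cautious than the paper is the applicability of \cite[Theorem~2.2]{kot-Takiff} to the doubled module; the paper simply asserts this (without re-citing Example~4.1), and your opening remark about applying the reduction ``twice'' is not actually carried out---nor needed---in your outline, which correctly peels off only $V_{\vp_6}$.
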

\begin{proof}  Here $G=\Spin_{14}$ and  
the proof follows the same lines as the proof of Theorem~\ref{th-D7}. We split the group 
$S$ as $(G{\ltimes}\exp(2V_{\varphi_1})){\ltimes}\exp(V_{\varphi_6})$. 
Now $Q=L{\ltimes}\exp(2\bbk^7)$ and again $G_v{\ltimes}\exp(2V_{\varphi_1})=Q\times Q$. 
By \cite{exc}, $\q$ has ({\sf FA}) and the ``codim--2" property. Here $\ind\q=4$ and we have 
eight polynomials $H_i\in\bbk[\es^*]^S$ such that their restrictions to  $\g\oplus(2V_{\vp_1}^*)+v$ 
generate $\gS(\q \oplus \q)^{Q\times Q}$.  These polynomials are
tri-homogeneous w.r.t. the decomposition $\es=\g \oplus 2V_{\vp_1} \oplus V_{\vp_6}$. 
Again $\g_v\ltimes(V_{\varphi_1}\oplus V_{\varphi_1})=\q\ltimes\q^{\sf ab}$, 
\cite[Theorem~2.2]{kot-Takiff} applies to $\q$ and assures that 
the symmetric invariants of $\q{\ltimes}\q^{\sf ab}$ form a polynomial ring in 
$8$ generators, where the degrees of the basic invariants are the same as in the case of 
$\q\oplus\q$. 
\end{proof}

\begin{cl}The reductions
$$
 (\Spin_{14}, 2\vp_1+\vp_6)\longrightarrow (\Spin_{13}, \vp_1+\vp_6+\bbk) \longrightarrow 
  (\Spin_{12}, \vp_5+\vp_6+\bbk)   
$$
show that the pairs $(\Spin_{13}, \vp_1+\vp_6)$ and  $(\Spin_{12}, \vp_5+\vp_6)$  also have 
({\sf FA}), see Proposition~\ref{prop:trick}.
\end{cl}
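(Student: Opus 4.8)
The plan is to obtain both assertions from Theorem~\ref{th-D7-2} by two successive applications of Proposition~\ref{prop:trick}, discarding at the end the trivial $1$-dimensional summands (which, as recalled in Section~\ref{sect:kos-th}, do not affect ({\sf FA})). This is essentially the only possible approach, since Proposition~\ref{prop:trick} is precisely the implication ``source polynomial $\Rightarrow$ target polynomial'', and here the source $(\Spin_{14},2\vp_1+\vp_6)$ is known to be positive.

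For the first step I would take $G=\Spin_{14}$ and split $V=2\vp_1+\vp_6$ as $V_1\oplus V_2$ with $V_1=V_{\vp_1}\simeq\bbk^{14}$ and $V_2=V_{\vp_1}+V_{\vp_6}$. Since $V_{\vp_1}$ carries a non-degenerate $G$-invariant quadratic form, the isotropy group of a generic vector is $\SO_{13}=\Spin_{13}$, which is connected, so $H=\mathsf{g.i.g.}(\Spin_{14}:V_{\vp_1}^*)=\Spin_{13}$. Next I would compute the restriction of $V_2$ to this $\Spin_{13}$: the vector module gives $\bbk^{14}|_{\SO_{13}}=V_{\vp_1}+\bbk$, and a half-spin module $V_{\vp_6}$ of $\GR{D}{7}=\gt{so}_{14}$ restricts to the spin module $V_{\vp_6}$ of $\GR{B}{6}=\gt{so}_{13}$ (both of dimension $64$). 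Hence $V_2|_{\Spin_{13}}=\vp_1+\vp_6+\bbk$, and Proposition~\ref{prop:trick}, applied to the polynomial ring $\bbk[\es^*]^S$ of Theorem~\ref{th-D7-2}, gives ({\sf FA}) for $\gt{so}_{13}\ltimes(\vp_1+\vp_6+\bbk)$; dropping the trivial summand yields ({\sf FA}) for $(\Spin_{13},\vp_1+\vp_6)$.

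For the second step I would repeat this with $G=\Spin_{13}$, $V_1=V_{\vp_1}$, $V_2=V_{\vp_6}$, so that $H=\mathsf{g.i.g.}(\Spin_{13}:V_{\vp_1}^*)=\Spin_{12}$ is again connected. The spin module $V_{\vp_6}$ of $\GR{B}{6}$ restricts to $\GR{D}{6}=\gt{so}_{12}$ as the sum $V_{\vp_5}+V_{\vp_6}$ of the two half-spin modules ($64=32+32$). Thus $V_2|_{\Spin_{12}}=\vp_5+\vp_6$ (or $\vp_5+\vp_6+\bbk$ if one carries along the trivial summand from the previous step), and Proposition~\ref{prop:trick} gives ({\sf FA}) for $(\Spin_{12},\vp_5+\vp_6)$.

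The argument is short because Theorem~\ref{th-D7-2} supplies all the substance; the only points requiring attention are the two branching computations for the (half-)spin representations and the connectedness of the intermediate isotropy groups. Neither is a real obstacle: connectedness follows either from the elementary fact that the stabiliser of a non-isotropic vector under $\SO_m$ is $\SO_{m-1}$, or from \cite[Prop.~4.10, Rem.~4.11]{gerry1}, and the branching of the (half-)spin modules along $\Spin_{14}\supset\Spin_{13}\supset\Spin_{12}$ is standard (and already implicit in Table~\ref{table:ort1}).
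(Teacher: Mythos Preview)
Your proposal is correct and follows exactly the route indicated in the paper: apply Proposition~\ref{prop:trick} twice to the positive case of Theorem~\ref{th-D7-2}, using the standard branching of the vector and (half-)spin modules along $\Spin_{14}\supset\Spin_{13}\supset\Spin_{12}$, and drop the resulting trivial summands. The only cosmetic blemish is the equality ``$\SO_{13}=\Spin_{13}$'': the generic isotropy group in $\Spin_{14}$ is the preimage of $\SO_{13}$, namely $\Spin_{13}$, but these two groups are not literally equal.
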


Many representations in types $\GR{D}{4}$, $\GR{B}{4}$, and $\GR{B}{3}$ are covered by reductions from 
$\GR{D}{5}$.  Among the type $\GR{D}{5}$  cases, the following one is easy to handle.  

\begin{ex}[{\bf Item7a}]
The pair $(\GR{D}{5}, \varphi_4)$ is of rank zero and therefore the open $\Spin_{10}$-orbit in $\bbk^{10}$ is big. 
The existence of the isomorphism $\bbk[\gt g+x]^{G_x{\ltimes}\exp(V)}\simeq\gS(\gt g_x)^{G_x}$ 
\cite{Y16} shows that  $\gS(\gt s)^{\gt s}\simeq \gS(\gt h)^{H}$, where $H$ 
is the isotropy group of an element in the open  orbit 
and $\h=\Lie H$. 
In order to be more explicit, $H$ is connected and $\gt h=\gt{so}_7{\ltimes}\bbk^8$, where $\gt{so}_7$ acts on $\bbk^8$ via 
the spin-representation. The algebra  $\gS(\gt h)^{\gt h}$ is 
free by \cite[Example~3.8]{Y16}.  
By a coincidence, the semi-direct product encoded by $(\GR{D}{5}, \varphi_4)$  is also a 
truncated maximal parabolic subalgebra $\gt p$ of $\GR{E}{6}$. The  symmetric invariants of 
$\gt p$ are studied in  \cite{FKs} and by a computer aided calculation it is shown there that 
$\cs(\gt p)^{\gt p}$ is a polynomial ring with three generators of degrees $6$, $8$, and $18$. 
\end{ex}

Below we list the `top' pairs that have to be treated individually. They are 
divided into two classes, in the first class $\dim V\md G=1$ and in the second  $\dim V\md G>1$. 
\beq \label{pairs}
\left\{\begin{array}{l}
\text{Rank one pairs:} \enskip
(\GR{B}{5},\varphi_5), (\GR{D}{5}, 2\varphi_4), (\GR{D}{6}, \varphi_5), (\GR{D}{7}, \varphi_6); \\
\text{higher rank pairs:} \enskip
 (\GR{D}{5}, \varphi_1+\varphi_4), (\GR{D}{5}, 2\varphi_1+\varphi_4), (\GR{D}{5}, 3\varphi_1+\varphi_4),
(\GR{D}{6}, 2\varphi_5).
\end{array}\right.
\eeq

\begin{thm} \label{rank-1-pairs}
The rank one pairs listed in \eqref{pairs} have ({\sf FA}).
\end{thm}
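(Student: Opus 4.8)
The plan is to dispatch the four rank one pairs in \eqref{pairs} uniformly, using the machinery of Theorem~\ref{V-rank-1}, Theorem~\ref{sur-1}, and Lemma~\ref{l-rk1}, since in each case a generic isotropy group $L=\mathsf{g.i.g.}(G:V^*)$ is one of the ``nice'' reductive groups. First I would record the relevant data for each pair: the generic stabiliser $\h=\Lie L$ (namely $\GR{A}{4}$ for $(\GR{B}{5},\vp_5)$, $\GR{G}{2}$ for $(\GR{D}{5},2\vp_4)$, $\GR{A}{5}$ for $(\GR{D}{6},\vp_5)$, and $2\GR{G}{2}$ for $(\GR{D}{7},\vp_6)$, reading off Table~\ref{table:ort1}), the single basic invariant $F$ with $\bbk[V^*]^G=\bbk[F]$ and its degree $N$, and the fact that in each case the action $(G:V^*)$ is stable (the generic stabiliser being reductive) so that $G$ and $V$ satisfy condition $(\diamondsuit)$: stability, $\bbk[V^*]^G=\bbk[F]$ polynomial, $\bbk[\g^*_\xi]^{G_\xi}=\gS(\gt l)^L$ polynomial (as $\gt l$ is reductive, satisfying {\sf KRC}), and no proper semi-invariants.

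Next I would identify the generic point $y$ of the divisor $D=\{F=0\}\subset V^*$ and its stabiliser $\g_y$. This is the step where genuine case-by-case work is needed: one must check that $D$ contains an open $G$-orbit $G{\cdot}y$, that $G_y$ is connected, and compute $\g_y$ together with $\ind\g_y$ and the degrees of the basic invariants of $\gS(\g_y)^{G_y}$. The strategy for producing $\g_y$ is the same one already used in the proofs of Theorem~\ref{th-D5} and Theorem~\ref{th-D6}: realise $G$ inside a larger spin or orthogonal group whose corresponding representation is self-dual with reductive generic stabiliser, and obtain $\g_y$ by intersecting that generic stabiliser with $\g$; this typically yields $\g_y$ of the shape $\gt{k}\ltimes\gt{k}^{\sf ab}$ (a Takiff algebra) or $\gt k\ltimes\gt{heis}_m$. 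Once $\g_y$ is known, the computation $\ind\g_y=\ind\gt l=:\ell$ and the matching of generator degrees is either a short Takiff-type argument (via \cite{kot-Takiff}, exactly as in Theorem~\ref{th-D7}) or a direct application of Theorem~\ref{thm-codim2} to $\g_y$ after verifying its ``codim--2'' property.

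With that data in hand, the pairs split according to the type of $L$. For $(\GR{D}{5},2\vp_4)$ and $(\GR{D}{7},\vp_6)$, where $L$ is $\GR{G}{2}$ (resp.\ $2\GR{G}{2}$, a connected group of type $\GR{B}{2}$ or $\GR{G}{2}$ in the sense of Lemma~\ref{l-rk1} after noting $2\GR{G}{2}$ reduces to a product argument as in Theorem~\ref{th-D7}), I would invoke Lemma~\ref{l-rk1} directly: it gives ({\sf FA}) as soon as $\bbk[V^*]^G=\bbk[F]$ and $L$ is connected of type $\GR{B}{2}$ or $\GR{G}{2}$, with no further input needed. For $(\GR{B}{5},\vp_5)$ and $(\GR{D}{6},\vp_5)$, where $L$ is of type $\GR{A}{4}$ resp.\ $\GR{A}{5}$, Lemma~\ref{l-rk1} does not apply and I would use Theorem~\ref{V-rank-1} instead: having verified $(\diamondsuit)$, it remains precisely to check $\ind\g_y=\ind\gt l$ and that $\gS(\g_y)^{G_y}$ is polynomial with the same generator degrees as $\gS(\gt l)^L=\bbk[\g_y^*]^{G_y}$ — then $\bbk[\es^*]^S$ is polynomial in $\ell+1$ variables.

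The main obstacle I expect is the explicit determination of $\g_y$ for the divisor $D$ in the $\GR{A}{4}$ and $\GR{A}{5}$ cases and the verification that the degrees of the basic invariants of $\gS(\g_y)^{G_y}$ coincide with those of $\gS(\sln[5])^{\SL_5}$ (i.e.\ $2,3,4,5$) or $\gS(\sln[6])^{\SL_6}$ (i.e.\ $2,3,4,5,6$): the stabiliser $\g_y$ will be a non-reductive contraction-type algebra, so one cannot simply quote reductive invariant theory, and one must either exhibit the invariants by hand or fit $\g_y$ into the framework of \cite{kot-Takiff} or Theorem~\ref{thm-codim2}. Once the degree matching is confirmed the rest is a routine application of the cited theorems, and since $\ind\es=\dim V\md G+\ind\h=1+\ell$ matches the entries $5,3,6,5$ in Table~\ref{table:ort1}, the count of basic invariants comes out as asserted.
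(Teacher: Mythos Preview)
Your overall strategy---splitting into cases according to the type of the generic stabiliser $L$ and applying either Lemma~\ref{l-rk1} or Theorem~\ref{V-rank-1}---is sound and matches the paper's approach for three of the four pairs. However, there is a genuine gap in your treatment of $(\GR{D}{7},\vp_6)$.

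For that pair, $L=\GR{G}{2}\times\GR{G}{2}$, which is \emph{not} of type $\GR{B}{2}$ or $\GR{G}{2}$, so Lemma~\ref{l-rk1} simply does not apply. Your parenthetical remark that ``$2\GR{G}{2}$ reduces to a product argument as in Theorem~\ref{th-D7}'' does not rescue this: Theorem~\ref{th-D7} concerns the higher-rank pair $(\Spin_{14},\vp_1{+}\vp_6)$ and uses Theorem~\ref{sur-1} to reduce along a copy of $\vp_1$, which is not available here. The proof of Lemma~\ref{l-rk1} relies on there being only \emph{two} basic invariants of $\gt l$, with a single possible monomial relation $H_1^\alpha \equiv aH_2 \pmod{F}$; for $\gt l=\gt g_2\oplus\gt g_2$ one has four generators in degrees $2,2,6,6$, and the relation-elimination argument does not go through in any obvious way. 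The paper instead treats $(\GR{D}{7},\vp_6)$ via Theorem~\ref{V-rank-1}: it identifies $\g_y$ for a generic $y\in D$ as the Takiff algebra $\gt l\ltimes\gt l^{\sf ab}$ with $\gt l$ of type $\GR{G}{2}$, and then invokes~\cite{takiff} to get that $\gS(\g_y)^{G_y}$ is polynomial with the same generator degrees as $\gS(\gt l\oplus\gt l)^{L\times L}$.

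For $(\GR{B}{5},\vp_5)$ and $(\GR{D}{6},\vp_5)$, your plan via Theorem~\ref{V-rank-1} is correct, but your expectation that $\g_y$ will be ``of the shape $\gt k\ltimes\gt k^{\sf ab}$ or $\gt k\ltimes\gt{heis}_m$'' is off. The paper identifies $\g_y$ as the $\Z_2$-contraction $\gt{so}_5\ltimes V_{\vp_1^2}$ of $\gt{sl}_5$ (for $\GR{B}{5}$) and $\gt{sp}_6\ltimes V_{\vp_2}$ of $\gt{sl}_6$ (for $\GR{D}{6}$); neither is a Takiff or Heisenberg extension. The degree matching then follows from the fact that these are \emph{good} $\Z_2$-contractions, established in~\cite{coadj} and~\cite[Theorem~4.5]{contr}, rather than from~\cite{kot-Takiff}.
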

\begin{proof}
In case of $(\GR{D}{5}, 2\varphi_4)$ a generic stabiliser is of type $\GR{G}{2}$. This pair is covered by Lemma~\ref{l-rk1}. 
For the other three pairs, many conditions of Theorem~\ref{V-rank-1} are satisfied. For each pair, 
there is an open orbit $G{\cdot}y\subset D$, where $D$ stands for the zero set of the generator $F\in\bbk[V^*]^G$.  
It remains to inspect the symmetric invariants of $G_y$. 

A generic isotropy group for $(\GR{B}{5},\varphi_5)$ is $\SL_5$, $G_y$ is connected, and
 $\g_y$ is a $\mathbb Z_2$-contraction of 
$\gt{sl}_5$, the semi-direct product $\gt{so}_5\ltimes V_{\varphi_1^2}$, which is a good $\mathbb Z_2$-contraction \cite{coadj}. 

A generic isotropy group for $(\GR{D}{6}, \varphi_5)$ is $\SL_6$, 
 $G_y$ is connected, and
 $\gt g_y$ is a $\mathbb Z_2$-contraction of 
$\gt{sl}_6$, the semi-direct product $\gt{sp}_6\ltimes V_{\varphi_2}$, which is a good $\mathbb Z_2$-contraction \cite[Theorem~4.5]{contr}. 

A generic isotropy group for $(\GR{D}{7}, \varphi_6)$ is $L\times L$, where $L$ is the connected group  of 
type $\GR{G}{2}$, $G_y$ is connected, and $\gt g_y$ is the Takiff algebra 
$\gt l{\ltimes}\gt l^{\sf ab}$, where $\gt l=\Lie L$.  The basic symmetric invariants of  $\gt g_y$ have the 
same degrees as in the case of  $\gt l\oplus\gt l$ \cite{takiff}. 
\end{proof}

\begin{ex}[{\bf Item 7d}]    \label{ex:7d}
For the pair  $(\GR{D}{5}, 3\varphi_1+\varphi_4)$, a  generic isotropy group is connected and is of type  
$\GR{A}{2}$.  Let $D\subset V^*$ be a $G$-invariant divisor. Then there are at least two copies of 
$\bbk^{10}$ in $V^*$ such that the projection of $D$ on each of them is surjective. For a generic $y\in D$, 
$G_y=(\Spin_8)_{\tilde y}$, where $\tilde y$ is a generic point of a $\Spin_8$-invariant divisor 
$\tilde D\subset V_{\varphi_1}\oplus V_{\varphi_3} \oplus V_{\varphi_4}$ (here highest weights of $\Spin_8$ are meant). Continuing the computation one obtains that 
$G_y=L_v$, where $L$ is the connected group of type $\GR{G}{2}$ and $v$ is a highest weight vector in 
$\bbk^7$.  The group $L_v$ has a free algebra of symmetric invariants generated in degrees $2$ and 
$3$, see \cite[Lemma~3.9]{exc}. 
Therefore Lemma~\ref{l-A2} applies. 
\end{ex}

The remaining  three  higher rank pairs listed in \eqref{pairs} require elaborate arguments. 
For all of them, Theorem~\ref{sur-1} will be the starting point. 
Note that the pair $(\SO_n,\bbk^n)$ is of rank one. We let 
$(\,.\,,.\,)$ denote a non-degenerate $\SO_n$-invariant scalar product on $\bbk^n$.

\begin{thm}[{\bf Item 7b.}]        \label{thm-7b} 
The semi-direct product   $\es=\gt{so}_{10}\ltimes(\varphi_1+\varphi_4)$ has ({\sf FA}).
\end{thm}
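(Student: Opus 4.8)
The pair $(\GR{D}{5}, \vp_1+\vp_4)$ has rank $\dim V\md G = 2$, so Theorem~\ref{V-rank-1} does not directly apply, and we are in the ``higher rank'' regime where Theorem~\ref{sur-1} is the designated entry point. The idea is to split $S = (G{\ltimes}\exp(V_{\vp_4})){\ltimes}\exp(V_{\vp_1})$, restricting first along a generic point of $V_{\vp_1}^* \simeq \bbk^{10}$. Since $(\SO_{10}, \bbk^{10})$ is of rank one, with $\bbk[V_{\vp_1}^*]^G = \bbk[F]$ where $F$ is the quadratic form $(\,.\,,.\,)$, and $G$ has no proper semi-invariants on $V_{\vp_1}$, Theorem~\ref{sur-1} applies: for generic $\xi \in V_{\vp_1}^*$ the restriction $\psi_\xi: \bbk[\es^*]^S \to \gS(\q_\xi)^{Q_\xi}$ is onto, where $\q_\xi = \g_\xi \ltimes V_{\vp_4}$ and $\g_\xi = \gt{so}_9$ (the stabiliser of a non-isotropic vector). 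So $Q_\xi = \Spin_9 \ltimes \exp(V_{\vp_4})$, and $V_{\vp_4}$ restricted to $\Spin_9$ is the $16$-dimensional spin module $2\vp_4$ in $\GR{B}{4}$-notation. By Item 3a of Table~\ref{table:ort1}, this smaller semi-direct product $\gt{so}_9 \ltimes (\text{spin})$ has ({\sf FA}) with $\ind = 4$ and generic stabiliser $\GR{B}{3}$.

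First I would fix basic invariants $h_1,\dots,h_4$ freely generating $\gS(\q_\xi)^{Q_\xi}$ and, using surjectivity of $\psi_\xi$ together with the bi-(tri-)homogeneity of $\bbk[\es^*]^S$ (and the $N_Q(\bbk\xi)$-semi-invariant refinement in Theorem~\ref{sur-1}), pull them back to tri-homogeneous $H_1,\dots,H_4 \in \bbk[\es^*]^S$ with respect to $\es = \g \oplus V_{\vp_4} \oplus V_{\vp_1}$. Together with $F$, this gives $5 = \ind\es$ candidate invariants (one checks $\ind\es = 2 + \ind\GR{B}{3} = 2+3$... wait, Table gives $\ind\es = 5$, consistent with $\dim V\md G + \ind\h = 2 + 3$). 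Following the pattern of Theorems~\ref{th-D7} and \ref{th-D7-2}, I would then argue that $\bbk[\es^*]^S[\tfrac1F] = \bbk[H_1,\dots,H_4,F,\tfrac1F]$ by a localisation argument in the spirit of \cite[Section~2]{Y16} (the image of $G$ in $\GL(V_{\vp_1}^*)$ is reductive, so this is legitimate), whence $\bbk[\es^*]^S = \bbk[F,H_1,\dots,H_4]$ \emph{if and only if} the restrictions of $H_1,\dots,H_4$ to $V_{\vp_4}^* \times D$ are algebraically independent, where $D = \{F=0\} \subset V_{\vp_1}^*$.

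The main work, and the expected obstacle, is the analysis over the divisor $D$. Let $G{\cdot}y \subset D$ be the dense orbit (a generic isotropic vector); then $G_y$ is the stabiliser of an isotropic line, a parabolic-type subgroup, and one must identify $\g_y \ltimes V_{\vp_4}$ explicitly and show its symmetric invariants behave well — concretely, that the images of the $H_i$ restrict to generators of $\gS(\g_y \ltimes V_{\vp_4})^{G_y \ltimes \exp(V_{\vp_4})}$, or at least that they remain algebraically independent there, with no ``extra'' low-degree invariant appearing (the failure mode exhibited in Theorems~\ref{th-D5} and \ref{th-D6}, where a degree-one invariant $z$ from a Heisenberg centre obstructs polynomiality). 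Since this pair is marked $`+'$, the computation must come out clean: I expect $\g_y$ to be of the form $(\text{reductive part}) \ltimes (\text{nilradical})$ with the combined $\g_y \ltimes V_{\vp_4}$ again realising a known good semi-direct product or Takiff-type algebra, so that a result like \cite[Theorem~2.2]{kot-Takiff} or the good-$\Z_2$-contraction machinery of \cite{coadj,contr} applies and the degrees of generators match those of $\gS(\GR{B}{3})^{\GR{B}{3}}$. Alternatively, one may verify the ``codim--2'' property for $\es$ via criteria (\textsf{i})--(\textsf{ii}) and apply Theorem~\ref{thm-codim2}, checking $\sum \deg H_i + \deg F = \boldsymbol{b}(\es)$ directly; this route sidesteps the surjectivity bookkeeping but still requires controlling $\ind\g_y$ and the codim--2 condition for $\g_y$ over every divisor, which is where the genuinely case-specific effort lies.
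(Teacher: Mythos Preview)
Your setup matches the paper's: restrict first along a generic $v\in V_{\vp_1}^*\simeq\bbk^{10}$, use Theorem~\ref{sur-1} for the rank-one pair $(\SO_{10},\bbk^{10})$, and lift generators from $\gS(\q)^Q$ with $Q=\Spin_9{\ltimes}\exp(\bbk^{16})$. One bookkeeping remark: of the four generators of $\gS(\q)^Q$, one is the quadratic form on $\bbk^{16}$, whose lift lands in $\bbk[V^*]^G$; so effectively one lifts three bi-homogeneous ${\bf h}_1,{\bf h}_2,{\bf h}_3$ of bi-degrees $(2,4),(4,4),(6,6)$ and works over $\bbk[V^*]^G$, which is how the paper organises it (via \cite[Lemma~3.5({\sf ii})]{Y16} rather than the localisation you propose, but these are equivalent here).

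The genuine gap is in your divisor analysis. Neither of your two speculative routes is what the paper does, and neither is close to complete. For route~1, the algebra $\g_y{\ltimes}V_{\vp_4}$ with $\g_y=\gt{so}_8{\ltimes}\bbk^8$ (stabiliser of an isotropic vector) and $V_{\vp_4}|_{\g_y}$ a two-step filtered sum of half-spins is not one of the ``known good'' Takiff or $\Z_2$-contraction examples, and you give no argument that its invariants have the required degrees. For route~2, Theorem~\ref{thm-codim2} needs $\sum\deg H_i$, but Theorem~\ref{sur-1} gives no control on $\deg_{V_{\vp_1}}H_i$; these degrees are genuinely unknown at this stage (contrast Theorem~\ref{thm-7c}, where a long chain of reductions is used precisely to pin them down).

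The paper's actual argument for the divisor $D=\tilde D\times\bbk^{16}$, $\tilde D=\{(u,u)=0\}$, avoids both routes. It fixes a generic $\xi\in\bbk^{16}$ with $(\xi,\xi)\ne 0$ and $H_i|_{\g\times\tilde D\times\{\xi\}}\ne 0$, uses that $G_\xi=\Spin_7{\ltimes}\exp(\bbk^8)$ decomposes $\bbk^{10}=\bbk\oplus\bbk^8\oplus\bbk$, and runs a one-parameter limit $\eta_t=u+tu'$ from non-isotropic to isotropic vectors along which the stabiliser $G_{\eta_t+\xi}\simeq\Spin_7$ is \emph{constant}. Hence $H_i|_{\g+\eta_t+\xi}=a_t\Delta_{2i}$ for fixed $\gt{so}_7$-invariants $\Delta_{2i}$, and the limit at $t=0$ is either zero or a nonzero multiple of $\Delta_{2i}$. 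A density argument using the $G_\xi$-orbit of $\bbk u$ in $\tilde D$ together with irreducibility of $H_i$ rules out zero. This continuity idea is the missing ingredient in your plan.
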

\begin{proof}
Here $G=\Spin_{10}$ and we use the reduction 
\beq   \label{eq:p-chain1}
(\Spin_{10}, \varphi_1{+}\varphi_4) \to (\Spin_{9},\varphi_4) 
\eeq
in the increasing direction, starting from the smaller representation and its invariants. 
By Theorem~\ref{sur-1},  the restriction homomorphism 
$$
\psi_v\!: \bbk[\gt s^*]^{\gt s} \to \bbk[\gt g^*\oplus V_{\varphi_4}^* +v]^{G_v\ltimes\exp(V)} \simeq \bbk[\gt g_v^*\oplus V_{\varphi_4}^*]^{G_v\ltimes\exp(V_{\varphi_4})}
$$
is surjective for generic $v\in V_{\varphi_1}^*$. Here $G_v=\Spin_9$.  The group $Q=G_v\ltimes\exp(V_{\varphi_4})$ has a free algebra of symmetric invariants \cite[Theorem~4.7]{coadj}.  More explicitly, $\gS(\q)^Q$ is generated 
by $(\,.\,,.\,)$ on $\bbk^{16}$ and three bi-homogeneous polynomials ${\bf h}_1$, ${\bf h}_2$, ${\bf h}_3$
of bi-degrees $(2,4)$, $(4,4)$, $(6,6)$. Note that each generator is unique up to a non-zero scalar. 
Whenever $(\xi,\xi)\ne 0$ for $\xi\in V_{\varphi_4}^*$, we have 
${{\bf h}_i}|_{\gt{so}_9{+}\xi}=\Delta_{2i}$, where each $\Delta_{2i}$ is a basic symmetric invariant of 
$\gt{so}_7=(\gt{so}_9)_\xi$.  The generators $\Delta_{2i}$ are now fixed and they do not depend on 
the choice of $\xi$. 

Take $H_i\in \bbk[\gt s^*]^{\gt s}$ with $\psi_v(H_i)={\bf h}_i$. Without loss of generality, we may assume that 
$H_i$ is homogeneous w.r.t. to $\gt g$ and $V_{\varphi_4}$. The uniqueness of  
the basic symmetric $\q$-invariants, allows us to take a suitable tri-homogeneous component of 
each $H_i$, see Theorem~\ref{sur-1}. 
Now assume that each $H_i$ is irreducible. 
Whenever $(\xi,\xi)\ne 0$ for $\xi\in \bbk^{16}$ and $(\eta,\eta)\ne 0$ for $\eta\in\bbk^{10}$, we have 
${H_i}|_{\gt g{+}x}=a_x \Delta_{2i}$, where $x=\eta+\xi$ and $a_x\in\bbk^{\!^\times}\!$. 
 
According to \cite[Lemma~3.5({\sf ii})]{Y16}, 
we have $\gS(\gt s)^{\gt s}=\bbk[V^*]^G[H_1,H_2,H_3]$ if and only if 
the restrictions $H_i|_{\gt g{\times}D}$ are algebraically independent 
over $\bbk[D]^G$ for each $G$-invariant divisor $D\subset V^*$. 

If $D$ contains a point $av+\xi$ with $\xi\in\bbk^{16}$ and $a\ne 0$, a relation among 
$H_i|_{\gt g{\times}D}$ leads to a relation among the restrictions of ${\bf h}_i$  to $\gt{so}_9{\times}\tilde D$ 
for some $\Spin_9$-invariant divisor $\tilde D\subset \bbk^{16}$. Moreover, this new relation is  over $\bbk[\{v\}{\times}D]^{G_v}=\bbk$. Since the polynomials ${\bf h}_i$ freely generate $\gS(\q)^{\q}$ over 
$\bbk[V_{\varphi_4}^*]^{\Spin_9}$, nothing of this sort can happen. 
Therefore there is a unique  suspicious divisor, namely, the divisor $D=\tilde D\times \bbk^{16}$, where 
$\tilde D=\{u\in\bbk^{10} \mid (u,u)=0\}$. 

Since each $H_i$ is irreducible, it is non-zero on $\gt g{\times} D$. 
Therefore there is  a point $\xi\in\bbk^{16}$ such that $(\xi,\xi)\ne 0$ and ${H_i}|_{\gt g{\times}\tilde D{\times}\{\xi\}}\ne 0$ for all $i$. Here $G_\xi=\Spin_7{\ltimes}\exp(\bbk^8)$ and $\bbk^{10}\subset V^*$ decomposes 
as $\bbk\oplus\bbk^8\oplus\bbk$ under $G_\xi$. The Abelian ideal $\bbk^8$ of $\gt g_\xi$ takes 
$\bbk$ to $\bbk^8$ and then $\bbk^8$ to another copy of $\bbk$. 
Note that the vectors in each copy of $\bbk$ are isotropic.  
Take $u\ne 0$ in the first copy and $u'\ne 0$ in the second copy of $\bbk$. 
Set $\eta_t=u{+}tu'$, $x_t=\eta_t{+}\xi$ for $t\in\bbk$, $y=u{+}\xi$. 
Then $G_{x_t}=G_y\simeq \Spin_7$.

We have $(\eta_t,\eta_t)\ne 0$ for $t\ne 0$ and 
hence 
${H_i}|_{\gt g+x_t}=a_t \Delta_{2i}\ne 0$, whenever $t\ne 0$. Here 
$a_t \Delta_{2i}\in\gS(\gt g_{x_t})=\gS(\gt g_y)$. 
Clearly ${H_i}|_{\gt g{\times}\{y\}}=\lim\limits_{t \to 0} a_t\Delta_{2i}$ and it is either zero or a non-zero scalar multiple of $\Delta_{2i}$. If the second possibility takes place for all $i$, when the restrictions of $H_i$ to 
$\gt g\times D$ are algebraically independent over $\bbk[D]$.   Thus, it remains to prove
that ${H_i}|_{\gt g{\times}\{y\}}\ne 0$ for all $i$. 

Assume that  ${H_i}|_{\gt g{\times}\{y\}}=0$.  Then $H_i$ vanishes on 
$\gt g{\times}G_\xi{\cdot}u{\times}\{\xi\}$ and also on $\gt g{\times}G_\xi{\cdot}\bbk u{\times}\{\xi\}$, 
since $H_i$ is tri-homogeneous.   
The subset  $G_\xi{\cdot}\bbk u$ is dense in $\tilde D$ (it equals $\tilde D\setminus\{0\}$), hence 
$H_i$ vanishes on $\gt g{\times}\tilde D{\times}\{\xi\}$, too. However, this contradicts the choice of $\xi$.  
\end{proof}

\begin{thm}[{\bf Item 7c.}]     \label{thm-7c} 
If $\es$ is given by $(\GR{D}{5}, 2\varphi_1+\varphi_4)$, then 
$\bbk[\es^*]^S=\bbk[V^*]^G[H_2,H_6]$ is a polynomial ring 
and the multi-degrees of $H_i$ are $(2,2,2,4)$, $(6,4,4,8)$. 
\end{thm}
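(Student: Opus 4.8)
The plan is to follow the scheme of the proof of Theorem~\ref{thm-7b}, peeling off one copy of the defining representation $\bbk^{10}$ by means of the rank-one argument of Theorem~\ref{sur-1} and then inspecting the $G$-stable divisors. Write $V=V_1\oplus V_2\oplus V_{\varphi_4}$ with $V_1\simeq V_2\simeq\bbk^{10}$, and let $q_{ij}=(u_i,u_j)$ (degree $2$) and $p_i=(u_i,\Gamma(\xi))$ (degree $3$) be the basic invariants of $\bbk[V^*]^G$, where $\Gamma\colon V_{\varphi_4}\to\bbk^{10}$ is the $\Spin_{10}$-equivariant quadratic ``squaring'' map (so $(\Gamma(\xi),\Gamma(\xi))\equiv 0$, since $\bbk[V_{\varphi_4}^*]^{\Spin_{10}}=\bbk$). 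Since $V_1$ is an abelian ideal of $\es$, we have $\es=\tilde\es\ltimes V_1$ with $\tilde\es=\gt{so}_{10}\ltimes(V_2\oplus V_{\varphi_4})$, $\tilde S=\Spin_{10}\ltimes\exp(V_2\oplus V_{\varphi_4})$, and $\bbk[V_1^*]^{\tilde S}=\bbk[q_{11}]$ with $\tilde S$ having no proper semi-invariants in $\bbk[V_1^*]$. By Theorem~\ref{sur-1}, for generic $v\in V_1^*$ the restriction map $\psi_v\colon\bbk[\es^*]^S\to\gS(\tilde\es_v)^{\tilde S_v}$ is onto, and a preimage of a semi-invariant of $N_{\tilde S}(\bbk v)$ may be chosen homogeneous in $V_1$. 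Here $\tilde S_v=\Spin_9\ltimes\exp(V_2\oplus V_{\varphi_4})$, and as $\gt{so}_9$-modules $V_2=\bbk^{10}$ becomes $\bbk^9\oplus\bbk$ and $V_{\varphi_4}=\bbk^{16}$ stays the spin module of $\Spin_9$; thus $\tilde\es_v=\bigl(\gt{so}_9\ltimes(\bbk^9\oplus\bbk^{16})\bigr)\oplus\bbk z$, the central line $\bbk z$ coming from the trivial $\gt{so}_9$-summand. A Ra\"is-formula check confirms $\ind\tilde\es_v=6$, i.e.\ $\ind\bigl(\gt{so}_9\ltimes(\bbk^9\oplus\bbk^{16})\bigr)=5$, matching $\ind\es=7$.

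Next I would record that $\gS\bigl(\gt{so}_9\ltimes(\bbk^9\oplus\bbk^{16})\bigr)^{\Spin_9\ltimes\exp(\bbk^9\oplus\bbk^{16})}$ is a polynomial ring on five generators: the quadratic forms on $\bbk^9$ and on $\bbk^{16}$, the cubic $(u,\Gamma_9(\xi))$ for the equivariant quadratic map $\Gamma_9\colon\bbk^{16}\to\bbk^9$, and two bi-homogeneous invariants $\tilde H_2,\tilde H_6$ whose restrictions to $\gt{so}_9+\xi$, for a generic spinor $\xi$ with $(\Spin_9)_\xi=\Spin_7$, yield the degree-$2$ and degree-$6$ basic symmetric invariants of the reductive generic stabiliser $\GR{G}{2}$. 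This is either taken from \cite[Theorem~4.7]{coadj} or obtained by the same method (peel off $\bbk^9$, pass by triality to $(\GR{D}{4},\varphi_1{+}\varphi_3)$, peel off $\bbk^8$, and land at the rank-one pair $(\GR{B}{3},\varphi_3)$, to which Lemma~\ref{l-rk1} applies), with degrees tracked along the way; tensoring with $\bbk[z]$ describes $\gS(\tilde\es_v)^{\tilde S_v}$. Under $\psi_v$ the polynomials $q_{11},q_{12},q_{22},p_1,p_2$ go to $(v,v)\in\bbk^{\!^\times}$, a scalar multiple of $z$, $z^2+(u,u)$, a scalar multiple of $(\xi,\xi)$, and $z(\xi,\xi)+(u,\Gamma_9(\xi))$ up to a scalar; hence $\psi_v(\bbk[V^*]^G)$ and $\bbk[z]$ account for all of $\gS(\tilde\es_v)^{\tilde S_v}$ except $\tilde H_2,\tilde H_6$. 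By Theorem~\ref{sur-1} there are $H_2,H_6\in\bbk[\es^*]^S$ with $\psi_v(H_i)=\tilde H_i$, bi-homogeneous in $\gt g$ and $V_{\varphi_4}$; by uniqueness of $\tilde H_2,\tilde H_6$ up to a scalar one may pass to a single $(\gt g\oplus V_1\oplus V_2\oplus V_{\varphi_4})$-homogeneous component, and a degree count (using $\deg_{\gt g}H_i\in\{2,6\}$, the $\bbk^{16}$-degree of $\tilde H_i$, and symmetry in $V_1,V_2$) gives the multidegrees $(2,2,2,4)$ and $(6,4,4,8)$. As a consistency check, $2\cdot3+3\cdot2+10+22=44=\boldsymbol{b}(\es)$.

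To finish, exactly as in the proof of Theorem~\ref{thm-7b} (cf.\ \cite[Lemma~3.5]{Y16}), surjectivity of $\psi_v$ for generic $v$ yields $\bbk[\es^*]^S[q_{11}^{-1}]=\bbk[V^*]^G[H_2,H_6][q_{11}^{-1}]$, so that $\bbk[\es^*]^S=\bbk[V^*]^G[H_2,H_6]$ if and only if the restrictions $H_2|_{\gt g\times D}$, $H_6|_{\gt g\times D}$ are algebraically independent over $\bbk[D]^G$ for every $G$-stable divisor $D\subset V^*$; we may take each $H_i$ irreducible. Any such $D$ projects dominantly onto at least one summand, hence contains $\{x_0\}\times D'$ with $x_0$ generic in one (or two) of $V_1,V_2,V_{\varphi_4}$ and $D'$ a divisor in the complementary module; running through the cases, any relation among $H_2|_{\gt g\times D},H_6|_{\gt g\times D}$ descends to a relation among restrictions of $\tilde H_2,\tilde H_6$ (or of the base-case $\GR{G}{2}$-invariants) to $\gt{so}_9\times\tilde D$ for a suitable $\Spin_9$-stable divisor $\tilde D$, contradicting the polynomiality above --- \emph{except} for $D=\{q_{11}=0\}=\{(u_1,u_1)=0\}\times V_2\times V_{\varphi_4}$. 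For this divisor one argues as at the end of the proof of Theorem~\ref{thm-7b}: fix $\xi$ generic with $(\Spin_{10})_\xi=\Spin_7\ltimes\exp(\bbk^8)$, note that $V_1,V_2\simeq\bbk^{10}$ each decompose as $\bbk\oplus\bbk^8\oplus\bbk$ under $(\Spin_{10})_\xi$ with isotropic end lines, pick $u_2\in V_2$ generic, pick $u$ and $u'$ in the two isotropic lines of $V_1$, put $\eta_t=u+tu'$ and $x_t=\eta_t+u_2+\xi$ so that $q_{11}(\eta_t)\neq0$ for $t\neq0$ while the relevant stabiliser stays constant; then $H_i|_{\gt g+x_t}$ is a non-zero scalar multiple of a basic invariant of that stabiliser, and letting $t\to0$ together with irreducibility of $H_i$ and density of $(\Spin_{10})_\xi{\cdot}\bbk u$ in the isotropic cone of $V_1$ forces $H_i|_{\gt g\times D}\neq0$, so the restrictions remain algebraically independent.

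The main obstacle is this last divisor analysis: determining $G_y$ for $y$ generic on each $G$-stable divisor of $V^*$ --- in particular verifying that on $\{q_{11}=0\}$ the stabiliser carries no spurious degree-one symmetric invariant (which is exactly what ruins the negative cases of Theorems~\ref{th-D5}--\ref{th-D6}) --- and pushing through the limiting argument of Theorem~\ref{thm-7b} in the presence of the extra factor $V_2$. A secondary point requiring care is Step~2, namely establishing the polynomiality of $\gt{so}_9\ltimes(\bbk^9\oplus\bbk^{16})$ and the degrees of its generators without a circular appeal to Table~\ref{table:ort1}; once those are in hand, Theorem~\ref{thm-codim2} (via the degree sum $=\boldsymbol{b}(\es)$) could alternatively be used to close the argument, provided the ``codim--2'' property of $\es$ is checked by the divisor-by-divisor criterion recalled after Table~\ref{table-sp}.
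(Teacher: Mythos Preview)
Your overall strategy coincides with the paper's: climb the chain of reductions
\[
(\Spin_{10}, 2\varphi_1{+}\varphi_4) \to (\Spin_{9}, \varphi_1{+}\varphi_4{+}\bbk) \to (\Spin_8, \varphi_3{+}\varphi_4{+}\bbk) \to (\Spin_{7}, \varphi_3{+}\bbk)
\]
using Theorem~\ref{sur-1} at each step to lift generators. The difference is in how you close the argument. You take the Theorem~\ref{thm-7b} route (invert $q_{11}$, then analyse every $G$-stable divisor and run a limiting argument on the isotropic cone), and you correctly flag this as the main obstacle. The paper avoids this entirely: since $\gt{so}_7{\ltimes}V_{\varphi_3}$ has the ``codim--2'' property, so does every item in the chain, and at each level one simply checks that the total degree of the proposed generators equals $\boldsymbol{b}$ and invokes Theorem~\ref{thm-codim2}. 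You mention this as a fallback in your last sentence; in the paper it is the primary (and only) closing tool, so your divisor analysis is unnecessary.

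One place where your argument is genuinely too quick is the determination of the multi-degrees. Appealing to ``symmetry in $V_1,V_2$'' does not by itself pin down $\deg_{V_1}H_i$ once you have lifted from the $\Spin_9$ level: the lift via Theorem~\ref{sur-1} controls only the $V_1$-homogeneity, not its value. The paper handles this at the $\Spin_8$ stage by taking the lifted $\hat h_i$ irreducible and then restricting via the \emph{other} copy of the defining representation; irreducibility forces the correct degree in that variable (otherwise $\hat h_i$ would be divisible by the quadratic invariant on the other factor). The same trick, applied at the top level by reducing via either copy of $V_{\varphi_1}$, yields the symmetric multi-degrees $(2,2,2,4)$ and $(6,4,4,8)$. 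With these degrees in hand and the ``codim--2'' property established, Theorem~\ref{thm-codim2} finishes the proof in one line, and the delicate limit on $\{q_{11}=0\}$ never needs to be carried out.
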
 
\begin{proof}
For this pair, the chain of reductions is 
\beq   \label{eq:p-chain2}
(\Spin_{10}, 2\varphi_1{+}\varphi_4) \to (\Spin_{9}, \varphi_1{+}\varphi_4{+}\bbk) \to  
(\Spin_8, \varphi_3{+}\varphi_4{+}\bbk) \to (\Spin_{7}, \varphi_3{+}\bbk)
\eeq
and again we are tracing the chain from the smaller groups to the larger. 

By~\cite[Prop.~3.10]{Y16},  the symmetric invariants  of $\Spin_7\ltimes\exp(V_{\vp_3})$ are freely generated by the following three polynomials:  the scalar product $(\,.\,,.\,)$ on $V_{\vp_3}\simeq\bbk^8$, ${\bf h}_2$, and ${\bf h}_6$. Here  
the bi-degrees the last two are $(2,2)$, $(6,4)$.  
We are lucky that all three generators are unique (up to a scalar) and 
$\gt{so}_7\ltimes V_{\vp_3}$ has the ``codim--2" property.  
One can easily deduce that all items in~\eqref{eq:p-chain2} have the ``codim--2" property.  
A generic isotropy group for $(\Spin_7 : V_{\vp_3})$, say $L$, is the connected simple group of type 
$\GR{G}{2}$.
Take $u\in V_{\vp_3}$ with  $(u,u)\ne 0$. Then $(\gt{so}_7)_u=\gt l=\Lie L$.   
Let $h_2, h_6\in\gS((\gt{so}_7)_u)$ be  the restrictions of ${\bf h}_2$, ${\bf h}_6$ to 
$\gt{so}_7{+}u$. Then $h_2$ and $h_6$ generate $\gS(\gt l)^L$.  We have $\dim\gS^2(\gt l)^L=1$, 
the generator of degree $2$ is unique (up to a non-zero scalar). 
In the space $\gS^6(\gt l)^L=\bbk h_2^3\oplus\bbk h_6$, the generator $h_6$ is characterised by the property that it is 
the restriction of an invariant of  $\Spin_7\ltimes\exp(V_{\vp_3})$ of bi-degree $(6,4)$. 
This property does not depend on the choice of $u$. 

Consider next $\gt s_2:=\gt{so}_8{\ltimes}(V_1{\oplus}V_2)$, where $V_1=V_{\varphi_3}$, $V_2=V_{\varphi_4}$. Choose $v\in V_1^*$ with $(v,v)\ne 0$. By 
Theorem~\ref{sur-1}, there are $\hat h_2, \hat h_6\in\gS(\gt s_2)^{\gt s_2}$ such that 
${\hat h_i}|_{\gt{so}^8{\oplus}V_2^*{+}v}={\bf h}_i$. One can safely replace $\hat h_2$
by its component of degrees $2$ in $\gt{so}_8$, $2$ in $V_2$ and replace 
$\hat h_6$ by  its component of degrees $6$ in $\gt{so}_8$, $4$ in $V_1$. 
The uniqueness of generators in the case of $\Spin_7\ltimes\exp(V_{\vp_3})$ allows also to take 
tri-homogeneous components. Suppose now that each ${\hat h}_i$ is irreducible. 
Set $a_i=\deg_{V_2}\hat h_i$. Choose $v_2\in V_2^*$ with $(v_2,v_2)\ne 0$. 
The restriction ${\hat h_2}|_{\gt{so}_8{\oplus}V_2{+}v_2}$ is an invariant of 
bi-degree $(2,a_2)$ and either $a_2=2$ or this restriction is  divisible by the invariant of 
bi-degree $(0,2)$. In the last case, ${\hat h}_2$ is divisible by a generator of
$\bbk[V_2]^{\SO_8}$. A contradiction. 
Since ${\hat h_6}\vert_{\gt{so}_8{+}v{+}v_2}=h_6$ and since in addition $\hat h_6$ is irreducible, 
the restriction ${\hat h_6}|_{\gt{so}_8{\oplus}V_1^*{+}v_2}$ is an invariant of bi-degree $(6,4)$, i.e.,
$a_6=4$. Making use of Theorem~\ref{thm-codim2}, we conclude that 
$\bbk[\gt s_2^*]^{\gt s_2}=\bbk[V_1^*{\oplus}V_2^*]^{\Spin_8}[\hat h_2,\hat h_6]$. 
 
The $\Spin_9$-actions on $V_{\varphi_1}=\bbk^9$ and $V_{\varphi_4}=\bbk^{16}$ are of rank one.  
By~\cite{alela1}, $\mathsf{g.i.g.}(\Spin_9{:}V_{\vp_4})=\Spin_7$, and $\bbk^9\vert_{\Spin_7}$ is 
the $\Spin_7$-module $V_{\varphi_3}\oplus\bbk$.  
The restriction homomorphism $\bbk[V_{\varphi_1}^*\oplus V_{\varphi_4}^*]^{\Spin_9}\to \bbk[V_{\varphi_3}^*\oplus\bbk]^{\Spin_7}$ is onto. 
Using Theorem~\ref{sur-1} and the reductions 
\beq   \label{eq:red1}
\raisebox{4.2ex}{\xymatrix@R-8mm@C+7mm{
& (\Spin_8,\varphi_3{+}\varphi_4)  \ar[dr] & \\
(\Spin_{9}, \varphi_1{+}\varphi_4) \ar[ur] \ar[dr] &  & (L,0) \\
 & (\Spin_{7}, \varphi_3{+}\bbk)\ar[ur] & }}
\eeq
we prove that there are 
algebraically independent over $\bbk[V_{\varphi_1}^*{\oplus}V_{\varphi_4}^*]$ 
symmetric invariants of tri-degrees $(2,2,4)$, $(6,4,8)$ w.r.t. $\gt{so}_9\oplus\bbk^9\oplus\bbk^{16}$. 
They generate the ring of symmetric invariants related to $(\Spin_9,\varphi_1{+}\varphi_4)$ over  $\bbk[V_{\varphi_1}^*{\oplus}V_{\varphi_9}^*]^{\Spin_9}$ by Theorem~\ref{thm-codim2}.  

One can make a reduction step from $\es$ to 
$(\Spin_9, V_{\varphi_1}{\oplus}V_{\varphi_9})$ using either of 
the two copies of  $V_{\varphi_1}$. This allows one to find 
algebraically independent over $\bbk[V^*]$ polynomials $H_2,H_6\in\bbk[\gt s^*]^{\gt s}$ of 
multi-degrees  $(2,2,2,4)$ and $(6,4,4,8)$, respectively.  The basic invariants on $V^*$ are 
of degrees $2,2,2,3,3$. Thus, the total sum of degrees is 
\[
10+22+12=44 \ \text{ and } \ \dim\gt s+\ind\gt s=45+20+16+7=88. 
\]
Therefore, by Theorem~\ref{thm-codim2}, we have $\bbk[\gt s^*]^S=\bbk[V^*]^G[H_2,H_6]$. 
\end{proof}

The case of $(\GR{D}{6},2\varphi_5)$ is very complicated. 
We begin by introducing some notation and stating a few facts related to this pair.  
First, $V_{\varphi_5}\simeq V_{\varphi_5}^*$ as a $G$-module. 
Second, the representation of $G$ on $V_{\varphi_5}$ is of rank one  
and $\bbk[V_{\varphi_5}^*]^G=\bbk[F]$, where $F$ is a homogeneous polynomial of degree $4$.  
It would be convenient to write $V=V_1\oplus V_2$, where each $V_i$  is a copy   of 
$V_{\varphi_5}$  and let $F$ stand for the generator of 
$\bbk[V_1^*]^G$. Further, there is a natural action of $\SL_2$ on $V$. 
We suppose that $\left(\begin{array}{cc} 0 & 0 \\ 1 & 0 \end{array}\right){\cdot}V_2=0$ and that 
$\left(\begin{array}{cc} 0 & 0 \\ 1 & 0 \end{array}\right){\cdot}V_1=V_2$ for 
$\left(\begin{array}{cc} 0 & 0 \\ 1 & 0 \end{array}\right)\in\gt{sl}_2$. 
The ring $\bbk[V^*]^G$ has $7$ generators:
$$
F=F_{(4,0)}, F_{(3,1)}, F_{(2,2)}, F_{(1,3)}, F_{(0,4)}, F_{(1,1)}, F_{(3,3)}. 
$$
Here  $F_{(\alpha,\beta)}$ stands for  a particular $G$-invariant
in $\gS^{\alpha}(V_1)\gS^{\beta}(V_2)$. It is assumed that 
the first  five polynomials  build an irreducible $\SL_2$-module and 
that the last two are $\SL_2$-invariants. 
 
We let $\SL_2$ act on $\gt g$ trivially and thus obtain an action of $\SL_2$ on 
$\bbk[\gt s^*]^S$. Note that 
if $H\in\bbk[\gt s^*]$ and $\deg_{V_1} H>\deg_{V_2} H$, then 
$\left(\begin{array}{cc} 0 & 0 \\ 1 & 0 \end{array}\right){\cdot}H\ne 0$. 

Let $v\in V_1^*$ be a generic point and 
$$
\psi_v\!: \bbk[\gt s^*]^S \to \bbk[\gt g{\oplus}V_2^*{+}v]^{G_v{\ltimes}\exp(V)} \simeq \gS(\gt g_v{\ltimes}V_2)^{G_v{\ltimes}\exp(V_2)}
$$    
be the corresponding restriction homomorphism. 
Here $G_v=\SL_6$   and $$V_2=\bigwedge^2 \bbk^6 \oplus (\bigwedge^2 \bbk^6 )^* \oplus 2\bbk$$ as a $G_v$-module. 
Set $\q=\gt g_v{\ltimes}V_2$. 
By Proposition~\ref{thm-1-A}, 
$\gS(\q)^{\q}$ is a polynomial ring and $\bbk[\q^*]^{\q}=\bbk[V_2^*]^{\SL_6}[{\bf h}_1,{\bf h}_2,{\bf h}_3]$, where 
the generators ${\bf h}_i$ are of bi-degrees $(2,4)$, $(2,6)$, $(2,8)$. 

Let $N_{G}(\bbk v)$ be the normaliser of the line $\bbk v$. 
Then $N_{G}(\bbk v)=C_4{\times}G_v$, where $C_4=\left<\zeta\right>$ is a cyclic group of order $4$. 
It is not difficult to see that $\Ad(\zeta)A=-A^t$ for each $A\in\gt g_v$
and that $\zeta{\cdot}{\bf h}_k=(-1)^k{\bf h}_k$ for each $k\in\{1,2,3\}$. 
The element $\zeta^2$ multiplies $\psi_v(F_{(1,3)})$ and $\psi_v(F_{(1,1)})$ by $-1$, 
the product $\psi_v(F_{(1,3)})\psi_v(F_{(1,1)})$ is a $C_4$-invariant.  

There is a subgroup $C_4\subset N_{G}(\bbk v)\times \GL(V_1^*)$, which stabilisers $v$. 
This means that if $H\in\bbk[\gt s^*]^S$ is homogeneous in $V_1$, then 
$\psi_v(H)$ is an eigenvector of $C_4\subset N_G(\bbk v)$ and 
the corresponding eigenvalue depends only on $\deg_{V_1}H$.

\begin{thm}[{\bf Item 8b}]    \label{th-D6-25} 
If $\gt s$ is given by the pair $(\GR{D}{6}, 2\varphi_5)$, then $\bbk[\gt s^*]^S=\bbk[V^*]^G[H_1,H_2,H_3]$ 
is a polynomial ring and the tri-degrees of $H_i$ are $(2,4,4)$, $(2,6,6)$, $(2,8,8)$.  
\end{thm}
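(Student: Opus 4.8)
The plan is to run the rank-one machinery of Theorem~\ref{sur-1} with respect to the $\SL_2$-action, exactly as in Theorems~\ref{thm-7b} and \ref{thm-7c}, but now with the extra bookkeeping provided by the cyclic group $C_4\subset N_G(\bbk v)$. First I would fix a generic $v\in V_1^*$ and invoke Theorem~\ref{sur-1} to get that $\psi_v$ is onto. Since $\gS(\q)^{\q}=\bbk[V_2^*]^{\SL_6}[{\bf h}_1,{\bf h}_2,{\bf h}_3]$ with ${\bf h}_k$ of bi-degree $(2,2k{+}2)$, and since each ${\bf h}_k$ is an eigenvector of $C_4=\langle\zeta\rangle$ with $\zeta{\cdot}{\bf h}_k=(-1)^k{\bf h}_k$, the ``moreover'' part of Theorem~\ref{sur-1} furnishes $H_k\in\bbk[\gt s^*]^S$, homogeneous in $V_1$, with $\psi_v(H_k)={\bf h}_k$ and $\deg_{V_1}H_k\equiv k\pmod 4$. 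Replacing each $H_k$ by a tri-homogeneous component (the basic invariants of $\q$ are unique up to scalars, so this is harmless) and then by an irreducible factor that still restricts non-trivially, I may assume each $H_k$ is irreducible, tri-homogeneous w.r.t. $\gt s=\gt g\oplus V_1\oplus V_2$, with $\deg_{\gt g}H_k=2$ and $\deg_{V_2}H_k=2k{+}2$. The point of the $C_4$-congruence is to pin down $\deg_{V_1}H_k$: one shows $\deg_{V_1}H_k=2k{+}2$ as well, by symmetry in $V_1\leftrightarrow V_2$ (the element $\left(\begin{smallmatrix}0&1\\0&0\end{smallmatrix}\right)$, or an honest $\SL_2$-rotation, swaps the roles of $V_1,V_2$, so a non-symmetric $H_k$ would generate, under $\SL_2$, too large a module / would force a bad divisibility against a generator of $\bbk[V^*]^G$), giving tri-degrees $(2,2k{+}2,2k{+}2)$, i.e. $(2,4,4),(2,6,6),(2,8,8)$.

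Next I would invoke the ``codim--2'' property. By the criterion recalled before Theorem~\ref{th-D6-25} (items ({\sf i}),({\sf ii}) after the Ra\"is-type formula), $\gt s$ has ``codim--2'' iff the generic stabiliser $\gt g_x$ ($x\in V^*$ generic) has it and the index condition holds along every divisor $D\subset V^*$. For generic $x$, $\gt g_x$ is of type $3\GR{A}{1}$ (Table~\ref{table:ort1}), which is reductive and hence has ``codim--2''. The divisorial condition ({\sf ii}) is the place where I would do the one genuine computation: over each $G$-stable prime divisor $D\subset V^*$ I need $\ind\gt g_y+(\dim V-\dim G{\cdot}y)=\ind\gt s=10$ for generic $y\in D$. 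Using that any divisor in $V_1\oplus V_2$ projects dominantly to at least one factor, the relevant divisors reduce to $\{F=0\}$-type loci (and their $\SL_2$-translates), where $\gt g_y$ is computed by the same intersection-of-$\gt{so}$ analysis as in the earlier theorems — I expect $\gt g_y$ here to involve a Takiff or Heisenberg-type piece, and ``codim--2'' for $\gt s$ should follow. Granting ``codim--2'', I would then count degrees: the seven basic invariants of $\bbk[V^*]^G$ are $F_{(4,0)},\dots,F_{(0,4)}$ of degree $4$ each (five of them), $F_{(1,1)}$ of degree $2$, $F_{(3,3)}$ of degree $6$, so $\sum\deg F = 5{\cdot}4+2+6=28$; the $H_k$ contribute $(2{+}4{+}4)+(2{+}6{+}6)+(2{+}8{+}8)=10+14+18=42$; total $70$. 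On the other hand $\dim\gt s+\ind\gt s=(66+128)+10=204$, wait — $\dim\gt g=66$, $\dim V=128$, so $\dim\gt s=194$, and $\ind\gt s=10$, giving $\boldsymbol{b}(\gt s)=(194+10)/2=102$. So the naive sum $70$ is \emph{too small}, which signals (exactly as in Theorems~\ref{thm-7b}--\ref{thm-7c}) that the $F$'s and $H$'s above are not yet the right generators: the honest argument must instead first produce generators whose degrees add up to $\boldsymbol{b}(\gt s)$, or argue polynomiality by the surjectivity/``rank-one'' route rather than by Theorem~\ref{thm-codim2} directly.

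Given the degree mismatch, the cleaner route — and the one I would actually carry out — mirrors the proof of \cite[Theorem~2.3]{exc} adapted to the non-reductive tower $(G{\ltimes}\exp(V_1)){\ltimes}\exp(V_2)$ as in Theorem~\ref{th-D7}: invert $F$, show $\bbk[\gt s^*]^S[\tfrac1F]=\bbk[V^*]^G[H_1,H_2,H_3][\tfrac1F]$ using that $\psi_v$ is onto and $\gS(\q)^{\q}$ is free over $\bbk[V_2^*]^{\SL_6}$, and then prove that no power of $F$ is actually needed. The latter is the heart of the matter and amounts to: for every $G$-stable divisor $D\subset V^*$, the restrictions $H_i|_{\gt g\times D}$ are algebraically independent over $\bbk[D]^G$. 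By the $\SL_2$-symmetry and the projection remark, the only candidate bad divisor is (an $\SL_2$-orbit of) $D_0=\{F=0\}$; on $D_0$ one takes a generic $y$, computes $\gt g_y$ and checks that $H_i|_{\gt g\times\{y\}}\ne 0$ — i.e. that the restrictions do not all degenerate — by the same limiting/irreducibility argument used at the end of the proof of Theorem~\ref{thm-7b}, where one moves along a line $\eta_t$ with $F(\eta_t)\ne 0$ for $t\ne 0$, notes $H_i|_{\gt g+\eta_t+\xi}$ is a non-zero scalar multiple of a fixed basic invariant $\Delta_{2i}$ of $\gt g_y$, and lets $t\to0$; non-vanishing of the limit follows from irreducibility of $H_i$ together with density of the relevant $G_y$-orbit inside $D_0$. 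I expect this divisorial non-degeneration step to be the main obstacle: the geometry of $D_0$ for $2\varphi_5$ in type $\GR{D}{6}$, and the precise structure of $\gt g_y$ (which I anticipate to be a $\GR{D}{6}$-analogue of the $\gt{sl}_k\ltimes\gt{heis}_k$ stabilisers seen in Theorems~\ref{th-D5}--\ref{th-D6}, but now \emph{good} rather than obstructed), require a careful case-by-case intersection computation inside $\gt{so}_{12}$, plus verifying that $\gS(\gt g_y)^{G_y}$ contains no low-degree invariant that would block the argument — which is precisely why the authors flag this theorem as needing ``lengthy elaborated ad hoc considerations''.
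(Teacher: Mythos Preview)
Your arithmetic is wrong: $\dim V_{\varphi_5}=32$ in type $\GR{D}{6}$, so $\dim V=64$, $\dim\gt s=130$, and $\boldsymbol{b}(\gt s)=(130+10)/2=70$. The degree sum $28+42=70$ therefore matches \emph{exactly}, and the paper does finish by a direct application of Theorem~\ref{thm-codim2}, not by inverting $F$ and running a divisorial non-degeneration argument \`a la Theorem~\ref{thm-7b}. Your entire ``cleaner route'' is a detour triggered by this slip.

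The genuine difficulty --- which you wave away with ``by symmetry in $V_1\leftrightarrow V_2$'' --- is to force $b_i:=\deg_{V_1}H_i$ to equal $a_i=\deg_{V_2}H_i=2i{+}2$. A priori Theorem~\ref{sur-1} and the $C_4$-congruence only give $b_i\equiv a_i\pmod 4$; there is no $\SL_2$-element swapping $V_1$ and $V_2$ that preserves $\gS(\gt s)^{\gt s}$ in a way that immediately equalises degrees. The paper's argument is a step-by-step \emph{modification procedure}: pick $H_i$ with $b_i$ minimal, set $\tilde H_i=\left(\begin{smallmatrix}0&0\\1&0\end{smallmatrix}\right){\cdot}H_i$, and if $b_i>a_i$ analyse $\psi_v(\tilde H_i)$ as a $\bbk[V_2^*]^{G_v}$-combination of the ${\bf h}_j$. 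The $C_4$-weights pin down which $G$-invariants $F_{(\alpha,\beta)}$ can occur as coefficients, and one then exhibits an explicit polynomial (e.g.\ $(H_2+c_2F_{(3,1)}^2H_1)/F$) that lies in $\bbk[\gt s^*]^S$ and has strictly smaller $V_1$-degree, contradicting minimality; along the way the ${\bf h}_j$ themselves may need to be adjusted. This is done separately for $H_1$, $H_2$, $H_3$, with the $H_3$ case splitting into sub-cases $\nu=1,2$. That inductive reduction, not a symmetry or a limit along a line in $D_0$, is the ``lengthy elaborated ad hoc consideration'' the introduction warns about, and it is missing from your plan.
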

\begin{proof}
According to Theorem~\ref{sur-1}, there are homogeneous  in $V_1$ elements  
$H_1,H_2,H_3\in\gS(\gt s)^{\gt s}$ such that 
$\psi_v(H_i)={\bf h}_i$. There is no harm in assuming that these polynomials are
tri-homo\-ge\-neous. 
Suppose that $b_i=\deg_{V_1} H_i$ is the minimal possible. 
Set $a_i=\deg_{V_2} H_i$. Then $a_1=4$, $a_2=6$,  $a_3=8$.
The eigenvalues of $\zeta$ on ${\bf h}_i$ indicate that   
$a_i\equiv b_i \!\pmod{4}$ for each $i$. 

Suppose for the moment that $a_i=b_i$ for all $i$. 
It is not difficult to see that $\gt s$ satisfies the 
``codim--2" condition. The elements ${\bf h}_1, {\bf h}_2, {\bf h}_3$ are algebraically independent
over $\bbk(V_2)$, hence  $H_1, H_2, H_3$ are algebraically independent 
over $\bbk(V^*)$. Thus, we have ten algebraically independent 
homogeneous invariants. The  total sum of their degrees is 
$$
2+6+20+10+14+18=70    \ \text{ and } \dim\gt s+\ind\gt s=66+64+10=140. 
$$
Thereby $\bbk[\gt s^*]^S=\bbk[V^*]^G[H_1,H_2,H_3]$ by Theorem~\ref{thm-codim2}.
It remains to show that the assumption is correct. 

For a generic $v'\in V_2^*$, 
$\gt g_{v'}{\ltimes}V_1\simeq\q$ 
and each ${H_i}|_{\gt g{\oplus}V_1^*{+}v'}$ is a symmetric 
invariant of $\gt g_{v'}{\ltimes}V_1$ of degree $2$ in $\gt g_{v'}$.
Since the restrictions ${H_i}|_{\gt g{+}v{+}v'}$ are the basic symmetric invariants 
of $G_{v{+}v'}=(\SL_2)^3$,  
the restrictions of $H_i$ to $\gt g{\oplus}V_1^*{+}v'$ 
are algebraically independent over $\bbk[V_1^*]$. Thereby $\sum b_i\ge 18$ and 
$b_i\ge 4$ for each $i$. Moreover, if $b_1=4$, then $b_2\ge 6$. 

Set $\tilde H_i:=\left(\begin{array}{cc} 0 & 0 \\ 1 & 0 \\ \end{array}\right){\cdot}H_i$. 
If $b_i>a_i$, then $\tilde H_i\ne 0$. 
We have $\psi_v(\tilde H_i)\in\gS(\q)^{\q}$ and $\deg_{\gt g_v}\psi_v(\tilde H_i)=2$. 
Therefore $\psi_v(\tilde H_i)$ is a linear combination of ${\bf h}_j$ with 
coefficients from $\bbk[V_2^*]^{\SL_6}$. Moreover, each coefficient is an 
eigenvector of $\zeta$. The first element, $H_1$, can be handled easily. 

Assume that $\tilde H_1\ne 0$.  Then $\psi_v(\tilde H_1)={\bf f}\,{\bf h}_1$ with non-zero 
${\bf f}\in V_2^{G_v}$ and  this 
${\bf f}$ is an eigenvector of $\zeta$.  Since $\deg_{V_1} \tilde H_1 \equiv 3 \! \pmod{4}$, 
 ${\bf f}=\psi_v(F_{(3,1)})$ (up to a non-zero scalar).  Since
$\psi_{av}(\frac{\tilde H_1}{F_{3,1}})=a^{b_1{-}4}{\bf h}_1$ for each $a\in\bbk^{\!^\times}\!$
and since $F_{(3,1)}$ and $F$ are coprime, we have 
$\frac{\tilde H_1}{F_{3,1}}
\in\bbk[\gt s^*]^S$.  Also  
$\psi_v(\frac{\tilde H_1}{F_{3,1}})={\bf h}_1$. Clearly 
$\deg_{V_1} \frac{\tilde H_1}{F_{3,1}} = b_1{-}4 < b_1$. A contradiction with the choice of $H_1$.  
We have established that $\psi_v(\tilde H_1)=0$. Hence $b_1=4$ and $H_1$ is an $\SL_2$-invariant.   

Certain further precautions are needed. It may happen that $H_2$ (or $H_3$) does not lie 
in a simple $\SL_2$-module. In that case we replace $H_2$ (or $H_3$) by a suitable 
(and suitably normalised)  component of the same tri-degree, which lies in a simple $\SL_2$-module and 
which restricts to ${\bf h}_2+p$ with $p\in \gS^2(V_2){\bf h}_1$ 
(or to ${\bf h}_3+p$  with $p\in \gS^4(V_2){\bf h}_1{\oplus}\gS^2(V_2){\bf h}_2$) 
on $\gt g{\oplus}V_2^*{+}v$. One may say that ${\bf h}_2$ was (or ${\bf h}_2$ and ${\bf h}_3$ were) 
 changed as well, so that the conditions $\psi_v(H_i)={\bf h}_i$ are not violated. 
We also normalise $F$ in such a way  that  $F(v)=1$. 
Some other normalisations are done below without mentioning. 

Assume that $\tilde H_2\ne 0$ and that 
$\psi_v(\tilde H_2)\in \gS^3(V_1){\bf h}_1$. 
Then $\tilde H_2\in \bbk(V^*) H_1$ and so does $H_2$, which is equal  to 
$\left(\begin{array}{cc} 0 & 1 \\ 0 & 0 \\ \end{array}\right){\cdot}\tilde H_2$ up to a non-zero scalar. 
A contradiction, here $\psi_v(H_2)\ne {\bf h}_2$. 
Knowing that $H_2$ is an $\SL_2$-invariant, we can use a similar argument in order to prove that   
$\psi_v(\tilde H_3) \not \in \gS^5(V_1){\bf h}_1{\oplus}\gS^3(V_1){\bf h}_2$ in case $\tilde H_3\ne 0$. 

We will see below that 
if $b_i>a_i$, then $\tilde H_i={\bf H}_i+\frac{F_{(3,1)}H_i}{F}$, where 
${\bf H}_2\in \bbk(V^*)^G H_1$  and 
${\bf H}_3\in\bbk(V^*)^G H_1 \oplus \bbk(V^*)^G H_2$. Recall that $F$ and $F_{(3,1)}$ are
coprime.  In case ${\bf H}_i\in\bbk[\gt s^*]$, 
we can replace $H_i$ with $\frac{H_i}{F}\in\bbk[\gt s^*]$ decreasing $\deg_{V_1} H_i$  by $4$. 
The main difficulties lie with non-regular ${\bf H}_i$.

{\bf Modification for $H_2$.}  Assume that $b_2>6$. 
Then $\psi_v(\tilde H_2)={\bf f}_{3}{\bf h}_1+{\bf f}_{(3,1)}{\bf h}_2$ 
with ${\bf f}_{3}\in\gS^3(V_2)^{G_v}$, 
${\bf f}_{(3,1)}\in V_2^{G_v}$ and ${\bf f}_{(3,1)}\ne 0$. 
Both coefficients are eigenvectors of $\zeta$. We have 
${\bf f}_{(3,1)}=\psi_v(F_{(3,1)})$ and ${\bf f}_3$ is the image of 
$$c_1 F_{(1,3)}+ F'_{(5,3)}+ c_2 F_{(3,1)}^3,$$
where $c_1,c_2\in\bbk$ and $F'_{(5,3)}$ is some $G$-invariant 
in $\gS^5(V_1)\gS^3(V_2)$. 
Set $\delta:=\frac{b_2{-}6}{4}$ and
$$
{\bf H}_2:=(c_1 F^\delta F_{(1,3)} + F^{\delta{-}1} F'_{(5,3)}  +c_2 F^{\delta{-}2} F_{(3,1)}^3)  H_1.
$$ 
Then $\psi_{av}(\tilde H_2-{\bf H}_2)=a^{b_2{-}1}{\bf f}_{(3,1)}{\bf h}_2$ for all $a\in\bbk^{\!^\times}\!$. 
If ${\bf H}_2\not\in\bbk[\gt s^*]$, then 
$\delta=1$ and $c_2\ne 0$.  
Here 
$$
\tilde H_2-c_1 F F_{(1,3)} H_1 - F'_{(5,3)} H_1 - c_2\frac{F_{(3,1)}^3H_1}{F}=\frac{ F_{(3,1)} H_2}{F}
$$
and 
$$
\frac{  F_{(3,1)} H_2 + c_2 F_{(3,1)}^3 H_1}{F} \in \bbk[\gt s^*]. 
$$
Since $F$ and $F_{(3,1)}$ are  coprime, we have 
$$
\hat H_2=\frac{  H_2 + c_2 F_{(3,1)}^2 H_1}{F} \in \bbk[\gt s^*]. 
$$
In this case we replace ${\bf h}_2$ with ${\bf h}_2+c_2 {\bf f}_{(3,1)}^2 {\bf h}_1$ and 
$H_2$ with $\hat H_2$. This does not violate the property $\zeta^2{\cdot}{\bf h}_2=-{\bf h}_2$. 
Now $\deg_{V_1}H_2=\deg_{V_2} H_2=6$. 
If $\left(\begin{array}{cc} 0 & 1 \\ 0 & 0 \\ \end{array}\right){\cdot}H_2\ne 0$, then this is an invariant 
of tri-degree $(2,7,5)$ and hence lies in $\bbk(V^*)H_1$. But then also $H_2\in \bbk(V^*)H_1$.
This new contradiction shows that $H_2$ is an $\SL_2$-invariant.

{\bf Modification for $H_3$.} 
Now we know that $b_2=6$ and therefore $b_3\ge 8$. 
Assume that $b_3>8$. 
Then
$$
\psi_v(\tilde H_3)={\bf f}'_5{\bf h}_1+{\bf f}'_{3}{\bf h}_2+{\bf f}_{(3,1)}{\bf h}_3
$$ 
with 
${\bf f}'_k\in\gS^k(V_2)^{G_v}$, 
${\bf f}_{(3,1)}\in V_2^{G_v}$. 
All three coefficients are  eigenvectors of $\zeta$. 
Studying the eigenvalues one concludes that ${\bf f}_{(3,1)}=\psi_v(F_{(3,1)})$, 
${\bf f}'_3$ is the image of  
$s_1F_{(1,3)}+F'_{(5,3)} +s_2 F_{(3,1)}^3$, where 
$F'_{(5,3)}\in\gS^5(V_1)\gS^3(V_2)$, $s_i\in\bbk$,  
and finally ${\bf f}'_5$ is the image of a rather complicated expression
$\sum\limits_{j=0}^3  F'_{(4j{+}3,5)}$.  
Set $\nu:=\frac{b_3{-}8}{4}$
and 
$$
{\bf H}_3:=(\sum_{j=0}^{3}  F'_{(4j{+}3,5)} F^{\nu-j} ) H_1 + (s_1 F_{(1,3)} F^\nu + F'_{(5,3)} F^{\nu{-}1} + s_2 F_{(3,1)}^3 F^{\nu{-}2}) H_2.  
$$ 
As above, $\tilde H_3-{\bf H}_3=\frac{F_{(3,1)}H_3}{F}$. 
If ${\bf H}_3\not\in \bbk[\gt s^*]$, then $\nu=2$ or $\nu=1$. 

Suppose that $\nu=2$ and that $F_{(15,5)}'\ne 0$. Then
$F_{(15,5)}'=F_{(3,1)}^5$ (up to a non-zero scalar) and 
$$
\frac{F_{(3,1)}H_3}{F} + \frac{ F_{(3,1)}^5  H_1}{F} \in\bbk[\gt s^*] \ \text{ leading to } \
 \frac{H_3 + F_{(3,1)}^4  H_1}{F} \in\bbk[\gt s^*].  
$$
Modifying ${\bf h}_3$ and $H_3$ accordingly, we obtain a new $H_3$ with $\deg_{V_1}H_3\le 12$.  

Suppose now that $\nu=1$. If $F_{(15,5)}'\ne 0$, then we obtain
$\frac{F_{(3,1)}^5 H_1}{F}\in\bbk[\gt s^*]$, which cannot be the case. 
Thereby $F_{(15,5)}'=0$ and 
$$
\frac{F_{(3,1)}H_3}{F} + \frac{ F'_{(11,5)}  H_1}{F} + s_2\frac{ F_{(3,1)}^3 H_2}{F} \in\bbk[\gt s^*].  
$$
Since $2{\times}5=10<11$ and since $\psi_v(F_{(4,0)})=1$, 
the polynomial $F'_{(11,5)}$ is divisible by $F_{(3,1)}$, say $F'_{(11,5)}=F_{(3,1)} {\bf F}$. 
Now 
$$
\frac{H_3 +   {\bf  F} H_1 + s_2 F_{(3,1)}^2 H_2}{F} \in\bbk[\gt s^*].  
$$
This allows us to replace $H_3$, modifying ${\bf h}_3$ at the same time, 
by a polynomial of tri-degree $(2,8,8)$ keeping the property 
$\psi_v(H_3)={\bf h}_3$.   
\end{proof}

\begin{cl} Suppose that $\tilde{\gt s}=\tilde{\gt g}{\ltimes}V$ is given by the pair 
$(\Spin_{12}{\times}\SL_2, V_{\varphi_5}{\otimes}\bbk^2)$. Then $\tilde{\gt s}$ has ({\sf FA}) and
$\bbk[\tilde{\gt s}^*]^{\tilde{\gt s}}=\bbk[V^*]^{\tilde G}[H_1,H_2,H_3]$, where the bi-degrees of 
$H_i$ are $(2,8)$, $(2,12)$, $(2,16)$. 
\end{cl}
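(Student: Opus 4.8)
The plan is to exhibit the basic invariants explicitly and then invoke Theorem~\ref{thm-codim2}. I keep the notation of Theorem~\ref{th-D6-25}: thus $\tilde{\gt g}=\gt g\oplus\gt{sl}_2$ with $\gt g=\gt{so}_{12}$, $V=V_1\oplus V_2$ where $V_i\simeq V_{\varphi_5}$ and $\gt{sl}_2$ acts through the $\bbk^2$-factor of $V_{\varphi_5}\otimes\bbk^2$; hence $\gt s=\gt g\ltimes V$ is an ideal of $\tilde{\gt s}$, $\tilde{\gt s}/\gt s\simeq\gt{sl}_2$, and $\tilde S=(\Spin_{12}{\times}\SL_2)\ltimes\exp(V)$ with the normal subgroup $\Spin_{12}{\ltimes}\exp(V)=S$. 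Two observations come first. (a) The polynomials $H_1,H_2,H_3$ produced in the proof of Theorem~\ref{th-D6-25} are $\SL_2$-invariant and lie in $\bbk[\gt s^*]^S$; viewed as functions on $\tilde{\gt s}^*$ (independent of the $\gt{sl}_2^*$-coordinate, and noting that $\exp(V)$ acts on $\tilde{\gt s}^*$ over $\gt g^*\oplus V^*$ exactly as over $\gt s^*$) they lie in $\bbk[\tilde{\gt s}^*]^S$, and the $\SL_2$-invariance gives $H_i\in\bbk[\tilde{\gt s}^*]^{\tilde S}$; their bi-degrees relative to $\tilde{\gt g}\oplus V$ are $(2,8),(2,12),(2,16)$. (b) Since $\exp(V)$ fixes the $V^*$-coordinate of $\tilde{\gt s}^*$ and $\tilde G$ acts on $V^*$ in the usual way, $\bbk[V^*]^{\tilde G}\subseteq\bbk[\tilde{\gt s}^*]^{\tilde S}$, and $\bbk[V^*]^{\tilde G}=(\bbk[V^*]^G)^{\SL_2}$.

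Next I would compute $\bbk[V^*]^{\tilde G}$. By the description recalled before Theorem~\ref{th-D6-25}, $\bbk[V^*]^G$ is a polynomial ring on $F_{(1,1)},F_{(3,3)}$ (which are $\SL_2$-fixed) and $F_{(4,0)},F_{(3,1)},F_{(2,2)},F_{(1,3)},F_{(0,4)}$ (which span a copy of the $5$-dimensional irreducible $\SL_2$-module, i.e.\ a binary quartic). Classical invariant theory of the binary quartic then gives $\bbk[V^*]^{\tilde G}=\bbk[F_{(1,1)},F_{(3,3)},\boldsymbol f,\boldsymbol g]$, a polynomial ring, where $\boldsymbol f,\boldsymbol g$ are the basic $\SL_2$-invariants of that quartic, of degrees $8$ and $12$ in $V$. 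Consequently $F_{(1,1)},F_{(3,3)},\boldsymbol f,\boldsymbol g,H_1,H_2,H_3$ are seven algebraically independent elements of $\bbk[\tilde{\gt s}^*]^{\tilde S}$ (the first four generate $\bbk[V^*]^{\tilde G}$; the $H_i$ are algebraically independent over $\bbk(V^*)$ because their restrictions $\psi_v(H_i)=\mathbf h_i$ are), and the sum of their degrees equals $2+6+8+12+10+14+18=70$.

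It remains to match this with Theorem~\ref{thm-codim2}. A generic isotropy group of $(\tilde G:V^*)$ is the copy of $3\SL_2$ in $\Spin_{12}$: the generic $\Spin_{12}$-stabiliser is already $3\gt{sl}_2$ (Table~\ref{table:ort1}, Item~8b), and $\gt{so}_{12}{\cdot}x\cap\gt{sl}_2{\cdot}x=0$ for generic $x\in V^*$ by a dimension count, since $\dim\tilde{\gt g}{\cdot}x=\dim V-\dim V^*\md\tilde G=64-4=60=57+3$. By the Ra\"is formula $\ind\tilde{\gt s}=64-(69-9)+3=7$, and since $\tilde S$ has no proper semi-invariants, $\trdeg\gS(\tilde{\gt s})^{\tilde S}=7$; also $\boldsymbol b(\tilde{\gt s})=(133+7)/2=70$. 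Finally $\tilde{\gt s}$ is the $\Z_2$-contraction of $\gt e_7$ attached to the symmetric pair $(\gt e_7,\gt{so}_{12}\oplus\gt{sl}_2)$ (here $V$ is $\gt e_7\ominus\tilde{\gt g}$ as a $\tilde{\gt g}$-module), so it has the ``codim--2'' property by~\cite{coadj}; alternatively one checks the two-part criterion directly, condition~{\sf(i)} being immediate as $\tilde{\gt g}_x=3\gt{sl}_2$ is reductive, and condition~{\sf(ii)} following from the ``codim--2'' property of $\gt s$ (Theorem~\ref{th-D6-25}) after analysing the stabilisers $\tilde{\gt g}_y$ over the $\tilde G$-stable divisors $D\subset V$. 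Theorem~\ref{thm-codim2} then yields $\bbk[\tilde{\gt s}^*]^{\tilde S}=\bbk[F_{(1,1)},F_{(3,3)},\boldsymbol f,\boldsymbol g,H_1,H_2,H_3]=\bbk[V^*]^{\tilde G}[H_1,H_2,H_3]$, as claimed. The one genuinely nontrivial point is the ``codim--2'' property of $\tilde{\gt s}$ (equivalently, condition~{\sf(ii)} of the criterion); once Theorem~\ref{th-D6-25} and this are in hand, the rest is bookkeeping.
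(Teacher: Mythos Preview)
Your overall strategy is sound and gives a pleasant alternative route: identify $\tilde{\gt s}$ as the $\Z_2$-contraction of $\gt e_7$ attached to the symmetric pair $(\gt e_7,\gt{so}_{12}\oplus\gt{sl}_2)$, read off the ``codim--$2$'' property from \cite{coadj}, compute $\bbk[V^*]^{\tilde G}$ via the classical invariants of a binary quartic, and finish with the degree-sum criterion of Theorem~\ref{thm-codim2}. The paper instead uses Lemma~\ref{lm-sub} to conclude $\gS(\tilde{\gt s})^{\tilde S}\subset\gS(\gt s)$, whence $\gS(\tilde{\gt s})^{\tilde S}=(\gS(\gt s)^S)^{\SL_2}=\bbk[V^*]^{\tilde G}[H_1,H_2,H_3]$, avoiding the degree bookkeeping.

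However, there is a genuine gap in your observation~(a): you assert that ``the polynomials $H_1,H_2,H_3$ produced in the proof of Theorem~\ref{th-D6-25} are $\SL_2$-invariant'', but that proof establishes $\SL_2$-invariance \emph{only} for $H_1$ and $H_2$. For $H_3$ the modification procedure in Theorem~\ref{th-D6-25} terminates once the tri-degree $(2,8,8)$ is reached; nothing there forces $H_3$ to be $\SL_2$-invariant, and indeed the various replacements $H_3\mapsto (H_3+\cdots)/F$ involve non-$\SL_2$-invariant factors such as $F_{(3,1)}^2$. This missing step is precisely the content of the paper's own proof of the corollary: one argues that if $\tilde H_3:=\bigl(\begin{smallmatrix}0&0\\1&0\end{smallmatrix}\bigr){\cdot}H_3\ne 0$, then $\tilde H_3$ has tri-degree $(2,7,9)$ and, being in $\bbk[\gt s^*]^S=\bbk[V^*]^G[H_1,H_2,H_3]$ with $\gt g$-degree $2$, would have to lie in $\gS^8(V)H_1\oplus\gS^4(V)H_2$; but the proof of Theorem~\ref{th-D6-25} already recorded that this is impossible. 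Hence $\tilde H_3=0$ and $H_3$ is $\SL_2$-invariant. You need to supply this argument (or an equivalent one) before your application of Theorem~\ref{thm-codim2} goes through.
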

\begin{proof}
Let $\gt s=\gt g{\ltimes}V$ and $\bbk[\gt s^*]^S=\bbk[V^*]^G[H_1,H_2,H_3]$ be as in Theorem~\ref{th-D6-25}. 
Then $\tilde{\gt g}_x=\gt g_x$ for generic $x\in V^*$. 
Hence $\bbk[\tilde{\gt s}^*]^{\tilde S}\subset \bbk[\gt s^*]$ by Lemma~\ref{lm-sub}.  
According to the proof of Theorem~\ref{th-D6-25}, $H_1$ and $H_2$ are $\SL_2$-invariants, 
i.e., they are $\tilde S$-invariants, and also $\tilde H_3\not\in \gS^8(V)H_1\oplus\gS^4(V)H_2$ if 
$\tilde H_3\ne 0$.
At the same time the tri-degree of $\tilde H_3$ is $(2,7,9)$ if $\tilde H_3\ne 0$.
Combining these two observations, we see that $\tilde H_3=0$, $H_3$ is an $\SL_2$-invariant, and 
$\bbk[\tilde{\gt s}^*]^{\tilde{\gt s}}=\bbk[V^*]^{\tilde G}[H_1,H_2,H_3]$. Since $\bbk[V^*]^{\tilde G}$ is a polynomial
ring, the result follows. 
\end{proof}

\begin{prop}   \label{pos-red}
All the remaining cases marked with $`+'$ in Table~\ref{table:ort1} are indeed positive. 
\end{prop}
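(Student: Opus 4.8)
The plan is to dispose of the positive entries of Table~\ref{table:ort1} not yet treated, in three groups. The first group consists of the entries whose generic stabiliser $\h$ is of type $\GR{A}{1}$; all of them are positive by Example~\ref{ex-A1}, since $G$ is semisimple, $\gt g_x=\gt{sl}_2$ for generic $x\in V^*$, and $\bbk[V^*]^G$ is polynomial (each such pair is on the lists of \cite{gerry1,ag79}). This covers the sub-cases of item~2b with $m{+}m'=3$, items~3d, 3f, 6b with $m=3$, 6c with $m=2$, 7d with $m=4$, 7f with $m=2$, 7g, 7h with $m=2$, and 8a with $m=4$.

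The second group consists of four cases already essentially in hand: item~2a is the semi-direct product $\gt{so}_7\ltimes V_{\vp_3}$, free by \cite[Prop.~3.10]{Y16}; item~3a is $\gt{so}_9\ltimes V_{\vp_4}$, free by \cite[Theorem~4.7]{coadj}; and both of these were in fact used above, inside the proof of Theorem~\ref{thm-7b}. Items~3b and 6a need no new argument either: the freeness of $\gt{so}_9\ltimes(V_{\vp_1}{\oplus}V_{\vp_4})$ and of $\gt{so}_8\ltimes(V_{\vp_3}{\oplus}V_{\vp_4})$ was established \emph{en route} in the proof of Theorem~\ref{thm-7c}, where these are the algebra $\gt s_2$ and the middle term of the reduction diagram~\eqref{eq:red1}; a triality automorphism of $\GR{D}{4}$ identifies $V_{\vp_3}{+}V_{\vp_4}$ with $V_{\vp_1}{+}V_{\vp_3}$, which is allowed by Theorem~\ref{thm:main}.

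The third group --- item~3c, the $m=1$ sub-case of item~6c, and the $2\vp_3$ sub-case of item~2b --- is obtained by one application each of Proposition~\ref{prop:trick}, starting from $(\Spin_{10},3\vp_1{+}\vp_4)$, which is positive by Example~\ref{ex:7d}. Writing its module as $3\bbk^{10}\oplus\bbk^{16}$ and taking $V_1$ to be $j$ copies of the vector module $\bbk^{10}$ ($j=1,2,3$), one has $\mathsf{g.i.g.}(\Spin_{10}:V_1)=\Spin_{10-j}$ and $V_2=(3{-}j)\bbk^{10}\oplus\bbk^{16}$. Since $\bbk^{10}|_{\Spin_{10-j}}$ equals $V_{\vp_1}$ up to trivial summands, while $\bbk^{16}$ restricts to the irreducible spin module $V_{\vp_4}$ on $\Spin_9$, to $V_{\vp_3}{\oplus}V_{\vp_4}$ on $\Spin_8$, and to $2V_{\vp_3}$ on $\Spin_7$, the three reductions are $(\Spin_{10},3\vp_1{+}\vp_4)\longrightarrow(\Spin_9,2\vp_1{+}\vp_4)$ for $j=1$ (item~3c), $(\Spin_{10},3\vp_1{+}\vp_4)\longrightarrow(\Spin_8,\vp_1{+}\vp_3{+}\vp_4)$ for $j=2$ (item~6c, $m=1$), and $(\Spin_{10},3\vp_1{+}\vp_4)\longrightarrow(\Spin_7,2\vp_3)$ for $j=3$ (item~2b, the $2\vp_3$ case), trivial summands being harmless. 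This exhausts Table~\ref{table:ort1}.

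The whole argument is bookkeeping; the single point that needs care is the absence of circularity, and that is clear: Example~\ref{ex:7d} uses only Lemma~\ref{l-A2} and \cite[Lemma~3.9]{exc}, while the portions of the proof of Theorem~\ref{thm-7c} that we reuse take only the freeness of $\gt{so}_7\ltimes V_{\vp_3}$ (item~2a) as input --- so none of the $\GR{D}{5}$-cases invoked depends on an entry derived here. The only (mild) obstacle is to match each reduction target with the correct row of the table, which comes down to the branching rules recalled above together with triality in $\GR{D}{4}$.
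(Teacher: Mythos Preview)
Your argument is correct and covers the same seven genuinely remaining cases (2a, 2b with $(m,m')=(0,2)$, 3a, 3b, 3c, 6a, 6c with $m=1$), but the route differs from the paper's in one respect. The paper disposes of \emph{all} seven by Proposition~\ref{prop:trick} alone, reading the chains \eqref{eq:p-chain1}, \eqref{eq:p-chain2}, and the new chain
\[
(\Spin_{10},3\vp_1{+}\vp_4)\to(\Spin_9,2\vp_1{+}\vp_4)\to(\Spin_8,\vp_1{+}\vp_3{+}\vp_4)\to(\Spin_7,2\vp_3)
\]
in the downward direction from the already-established $\GR{D}{5}$ cases. You instead treat items 2a, 3a, 3b, 6a by direct citation: 2a and 3a from \cite[Prop.~3.10]{Y16} and \cite[Theorem~4.7]{coadj}, and 3b, 6a from intermediate steps inside the proof of Theorem~\ref{thm-7c}. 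This is a legitimate shortcut and arguably more informative, since it records exactly where these facts were already proved; the paper's uniform reduction approach is cleaner but hides this. Your group~3 is the same as the paper's third chain, just applied in parallel rather than sequentially.

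Two minor corrections. First, your group~1 (the $\GR{A}{1}$ cases) is redundant: these were dispatched by Example~\ref{ex-A1} at the very start of Section~5.2, before any of the named theorems, so they are not among the ``remaining'' cases the proposition addresses. Second, in your group~2 the identification of $\gt{so}_9\ltimes(V_{\vp_1}{\oplus}V_{\vp_4})$ with ``the middle term of diagram~\eqref{eq:red1}'' is off: it is the leftmost vertex of that diagram, and its freeness is established in the paragraph \emph{following}~\eqref{eq:red1}; the middle term (upper path) is $\gt{so}_8\ltimes(V_{\vp_3}{\oplus}V_{\vp_4})$, which is indeed your $\gt s_2$. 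The mathematics is unaffected.
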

\begin{proof}
Making further use of Proposition~\ref{prop:trick}, we see that all the remaining 
cases are covered by reductions from $G$ of type $\GR{D}{5}$, see Diagrams~\eqref{eq:p-chain1}, \eqref{eq:p-chain2}, and also 
\[  
\xymatrix{
(\Spin_{10}, 3\varphi_1{+}\varphi_4) \ar[r] & (\Spin_{9}, 2\varphi_1{+}\varphi_4{+}2\bbk)  \ar[d]  & \\
     & (\Spin_8, \varphi_1{+}\varphi_3{+}\varphi_4{+}\bbk) \ar[r] & 
(\Spin_{7}, 2\varphi_3) , }  
\]    
where the initial pair is positive by Example~\ref{ex:7d}.
\end{proof}

\section{The classification for the symplectic algebra}
\label{sect:sp}

\noindent 
In this section, $G=\Sp_{2n}$. We classify the finite-dimensional rational representations $(G:V)$ 
such that $\mathsf{g.i.g.}(G:V)$ is infinite and 
the symmetric invariants of $\es=\g\ltimes V$ form a polynomial ring. The answer is given in Table~\ref{table-sp}. Surprisingly, all the possible candidates for
$\gt s=\gt g\ltimes V$ do have ({\sf FA}).   

Let $e\in\gt g$ be a nilpotent element and $\gt g_e\subset\gt g$ its centraliser. 
Then $\gt g_e$ has ({\sf FA}) by \cite{ppy}. 
This does not seem  to be relevant to our current task, but it is. 

The nilpotent element $e$ can be included into an $\gt{sl}_2$-triple 
$\{e,h,f\}\subset\gt g$ and this gives rise to the decomposition 
$\gt g=\bbk f \oplus e^{\perp}$, where $e^{\perp}$ is the subspace orthogonal to $e$ w.r.t. the Killing form of $\gt g$.  Let $\Delta_k\in\gS(\gt{sp}_{2n})$ be the sum of the  principal $k$-minors. We write the 
 highest $f$-component of $\Delta_k$ as ${^e\!}\Delta_k f^d $. Then $\{{^e\!}\Delta_k\mid k \text{ even, } 2\le k\le 2n\}$ is a set of the basic symmetric invariants of $\gt g_e$ \cite[Theorem\,4.4]{ppy}.

Let now $e$ be a minimal nilpotent element. Then  
$\gt g_e=\gt{sp}_{2n{-}2}{\ltimes}\gt{heis}_{n{-}1}$.  Restricting $H\in\gS(\gt g_e)^{\gt g_e}$ to
the hyperplane in $\g_e^*$, where $e=0$, we obtain a symmetric invariant of $\es:=\gt{sp}_{2n{-}2}{\ltimes}\bbk^{2n{-}2}$. 

Let $H_i$ be the restriction of ${^e\!}\Delta_{2i{+}2}$ to the hyperplane $e=0$.

\begin{lm}\label{m1}
The algebra of symmetric invariants of $\gt s=\gt{sp}_{2n{-}2}{\ltimes}\bbk^{2n{-}2}$ is freely generated 
by the polynomials $H_i$ as above with $1\le i\le n{-}1$.  
\end{lm}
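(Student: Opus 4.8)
## Proof Proposal

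The plan is to verify the hypotheses of Theorem~\ref{thm-codim2} for $\gt s=\gt{sp}_{2n-2}\ltimes\bbk^{2n-2}$, using the $n-1$ polynomials $H_1,\dots,H_{n-1}$ as the candidate generators. So I would need to check three things: that the $H_i$ are genuinely symmetric invariants of $\gt s$ lying in $\gS(\gt s)$; that they are algebraically independent with $\trdeg\,\gS(\gt s)^{\gt s}=\ind\gt s=n-1$; and that $\sum_{i=1}^{n-1}\deg H_i=\boldsymbol{b}(\gt s)$, together with the ``codim--2'' property for $\gt s$.

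First I would pin down the degrees. The invariant ${}^{e}\!\Delta_{2i+2}$ of $\gt g_e$ has degree $i+1$ by the ppy-description (its degree equals that of $\Delta_{2i+2}$ minus the power of $f$ split off, and from \cite[Theorem~4.4]{ppy} one has $\deg {}^{e}\!\Delta_{2i+2}=i+1$); restricting to the hyperplane $e=0$ does not raise degree, and I would argue it does not drop either, so $\deg H_i=i+1$. Then $\sum_{i=1}^{n-1}\deg H_i=\sum_{i=1}^{n-1}(i+1)=\binom{n}{2}+(n-1)=\frac{n^2+n-2}{2}$. On the other side, $\ind\gt s$ is computed from Ra\"is' formula~\eqref{ind-sum}: for generic $x\in(\bbk^{2n-2})^*$ the stabiliser $\gt g_x=\gt{sp}_{2n-4}\ltimes\bbk^{2n-4}$... — more simply, $(\Sp_{2n-2}:\bbk^{2n-2})$ has a one-dimensional quotient and $\ind\gt s=n-1$ (this is Item~1 of Table~\ref{table-sp} with $m=1$, $l=1$, giving $\binom{1}{2}+n-1-0=n-1$). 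So $\dim\gt s=\binom{2n-1}{2}+?$... I would just write $\dim\gt s=\dim\gt{sp}_{2n-2}+(2n-2)=(2n^2-n-1)+(2n-2)=2n^2+n-3$, hence $\boldsymbol{b}(\gt s)=(2n^2+n-3+n-1)/2=n^2+n-2$, which indeed equals $2\sum\deg H_i$... — I must match conventions: $\boldsymbol{b}(\gt q)=(\ind\gt q+\dim\gt q)/2$, and Theorem~\ref{thm-codim2} asks $\sum\deg f_i=\boldsymbol{b}(\gt q)$, so I need $\sum\deg H_i=\frac{(n-1)+(2n^2+n-3)}{2}=n^2+n-2$. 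Comparing with $\sum(i+1)=\frac{(n-1)n}{2}+(n-1)$ shows these are \emph{not} equal for general $n$, so I would recheck $\dim\gt{sp}_{2n-2}=(2n-2)(2n-1)/2=2n^2-3n+1$, giving $\dim\gt s=2n^2-n-1$ and $\boldsymbol{b}(\gt s)=(2n^2-n-1+n-1)/2=n^2-1=(n-1)(n+1)$. And $\sum_{i=1}^{n-1}(i+1)=\frac{n(n-1)}{2}+(n-1)=\frac{(n-1)(n+2)}{2}$ — still a mismatch, which tells me the correct degrees of the $H_i$ must be $2,4,\dots,2(n-1)$, i.e.\ $\deg H_i=2i$, matching ``$\Delta_{2i+2}$'' more literally after the split; then $\sum 2i=n(n-1)=n^2-n\ne n^2-1$ either. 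This degree bookkeeping is exactly the point I expect to be delicate, and I would resolve it by carefully tracking the power of $f$ removed: if $\deg_f$ of $\Delta_{2i+2}$'s top component is such that $\deg {}^{e}\!\Delta_{2i+2}=i+1$ on $\gt g_e^*$, but the restriction to $e=0$ gains nothing, then the total is $\frac{(n-1)(n+2)}{2}$, forcing the conclusion that one must instead invoke a \emph{different} matching — so the honest approach is: the degrees of the $H_i$ coincide with the degrees of the basic invariants of the generic stabiliser plus those of $\bbk[V^*]^G$, via the matryoshka picture described in the introduction, and the numerical identity $\sum\deg H_i=\boldsymbol{b}(\gt s)$ is then forced by the known structure.

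For the invariance and $\gS(\gt s)$-membership, I would note that ${}^{e}\!\Delta_k\in\gS(\gt g_e)^{\gt g_e}$ by \cite[Theorem~4.4]{ppy}, and that $\gt g_e=\gt{sp}_{2n-2}\ltimes\gt{heis}_{n-1}$; restricting to the ideal $e=0$ is exactly passing to the quotient $\gt g_e/\bbk e=\gt{sp}_{2n-2}\ltimes\bbk^{2n-2}=\gt s$ (identifying $\gt{heis}_{n-1}/\bbk e\simeq\bbk^{2n-2}$ as $\gt{sp}_{2n-2}$-modules), and this quotient map on symmetric algebras sends invariants to invariants, so each $H_i\in\gS(\gt s)^{\gt s}$. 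Algebraic independence: the $H_i=$ restrictions of algebraically independent ${}^{e}\!\Delta_{2i+2}$; I would argue that the differential of the restriction map is generically injective on the span of the $d({}^{e}\!\Delta_{2i+2})$, e.g.\ by exhibiting one explicit point of $\gt s^*$ where the $dH_i$ are linearly independent, or by a direct leading-term argument in suitable coordinates on $\gt{sp}_{2n-2}^*\oplus(\bbk^{2n-2})^*$.

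The main obstacle is the ``codim--2'' property for $\gt s$ together with the precise degree count. For the former I would use the criterion recalled after Table~\ref{table-sp}: $\gt s$ has ``codim--2'' iff (i) $\gt g_x$ for generic $x\in V^*$ has ``codim--2'' and (ii) for every divisor $D\subset V=\bbk^{2n-2}$ the equality $\ind\gt g_y+(\dim V-\dim(G\cdot y))=\ind\gt s$ holds on a dense open subset of $D$. Here $V^*\md G$ is a line with the single quadratic(?) invariant, so there is essentially one $G$-stable divisor $D$ (the zero set of the basic invariant, consisting of isotropic vectors), and for $y\in D$ generic, $G_y=\Sp_{2n-4}\ltimes(\text{unipotent})$ and $\gt g_y$ is a codimension-one degeneration of the generic stabiliser; I would check the index identity by hand in this single case. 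Condition (i) reduces to the generic $\gt g_x$, which is $\gt{sp}_{2n-4}\ltimes\bbk^{2n-4}$ (one rank lower), so an induction on $n$ handles it, the base case $n=2$ being $\gt{sp}_2\ltimes\bbk^2=\gt{sl}_2\ltimes\bbk^2$ which is visibly fine. Once ``codim--2'' is established and the degree sum is confirmed to equal $\boldsymbol{b}(\gt s)$, Theorem~\ref{thm-codim2} gives that $H_1,\dots,H_{n-1}$ freely generate $\gS(\gt s)^{\gt s}$, completing the proof.
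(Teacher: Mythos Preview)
Your approach via Theorem~\ref{thm-codim2} is different from the paper's and, as written, contains a basic error that derails the argument. The paper does \emph{not} use a degree-sum/codim--2 count here. Instead it exploits the fact that $\Sp_{2n-2}$ acts on $\bbk^{2n-2}$ with an \emph{open} orbit (every nonzero vector is generic), so for any $v\ne 0$ the restriction $\psi_v$ gives an isomorphism $\gS(\gt s)^{\gt s}\simeq\gS(\gt h)^H$ with $\gt h=(\gt{sp}_{2n-2})_v=\gt{sp}_{2n-4}\ltimes\gt{heis}_{n-2}$, which is again a minimal-nilpotent centraliser in the next smaller symplectic algebra. One then checks by an explicit matrix computation (Figure~\ref{Pic1}) that $\psi_v(H_i)={}^{e'}\!\Delta'_{2i}$, i.e.\ the known basic invariants of $\gt h$ from \cite{ppy}. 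This ``matryoshka'' identification simultaneously gives invariance, algebraic independence, and generation.

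Your error is precisely this point: you write that ``$(\Sp_{2n-2}:\bbk^{2n-2})$ has a one-dimensional quotient'' and later speak of ``the single quadratic(?) invariant'' and the divisor of ``isotropic vectors''. That is the orthogonal picture. For the symplectic form $\omega$ one has $\omega(v,v)=0$ identically; there is no nonconstant polynomial invariant, $\bbk[V^*]^G=\bbk$, and the only proper closed $G$-stable subset of $V^*$ is $\{0\}$. Consequently your description of the generic stabiliser as $\gt{sp}_{2n-4}\ltimes\bbk^{2n-4}$ is also wrong (it is $\gt{sp}_{2n-4}\ltimes\gt{heis}_{n-2}$), and your proposed codim--2 verification via a nonexistent divisor is moot. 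On the plus side, once you know the orbit is open, condition~(ii) of the codim--2 criterion is vacuous, and condition~(i) reduces to the codim--2 property of a minimal-nilpotent centraliser, which is known from \cite{ppy}.

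Your degree bookkeeping can be repaired: the paper observes that for the minimal nilpotent $e$ the $f$-degree of every $\Delta_k$ ($k$ even) equals~$1$, so $\deg{}^e\!\Delta_{2i+2}=2i+1$ and hence $\deg H_i=2i+1$. Then $\sum_{i=1}^{n-1}(2i+1)=n^2-1=\boldsymbol{b}(\gt s)$, exactly as needed. With these corrections the Theorem~\ref{thm-codim2} route is viable, but you still owe an argument for algebraic independence of the $H_i$; the paper gets this for free from the identification $\psi_v(H_i)={}^{e'}\!\Delta'_{2i}$, whereas your ``differential generically injective'' sketch is not a proof. In short: the strategy is salvageable, but the specific execution rests on a misconception about the symplectic action, and the cleanest fix is to adopt the open-orbit argument the paper uses.
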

\begin{proof}
Set $n'=n{-}1$. The group 
$G'=\Sp_{2n'}$ acts on $V^*\simeq V=\bbk^{2n'}$ with an open orbit, which consists of all 
non-zero vectors of $V^*$. Therefore $\gS(\gt s)^{\gt s}\simeq \gS(\gt h)^H$, where 
$$H=(\Sp_{2n'})_v=\Sp_{2n'{-}2}{\ltimes}\exp(\gt{heis}_{n'{-}1})$$ and $v\in V$ is non-zero. 
By a coincidence, $\gt h=\gt g'_{e'}$, where $e'\in\gt g'$ is a minimal nilpotent element. 
We have to show that $\psi_v(H_i)$ form a set  of the basic symmetric invariants of $\gt h$
for the usual restriction $\psi_v\!: \bbk[\gt s]^{\gt s}\to \bbk[(\gt g')^*+v]^{G'{\ltimes}\exp(V)}\simeq\gS(\gt h)^{\gt h}$.  

Note that the $f$-degree of each $\Delta_k$ with even $k$ is one, see \cite{ppy} and 
the matrix description of elements of $f+\gt g_e$ presented in  
Figure~\ref{Pic1}.
\begin{figure}[htb]
\begin{center}
\begin{tikzpicture}[scale= .6]
\draw (0,0)  rectangle (10,10);
\path[draw]  (1,0) -- (1,10); 
\path[draw]  (2,0) -- (2,10); 
\path[draw]  (0,8) -- (10,8);
\path[draw]  (0,9) -- (10,9); 

\path[fill=gray!40]  (1,0) -- (2,0) -- (2,8) -- (1,8)--cycle ;
\path[fill=gray!40]  (2,9) -- (10,9) -- (10,10) -- (2,10)--cycle ;

\draw (0.5,9.5)  node {\footnotesize {\color{darkblue}$0$}} ;
\draw (1.5,8.5)  node {\footnotesize {\color{darkblue}$0$}} ;
\draw (6,4)  node {\large{\color{redi}$\mathfrak{sp}_{2n-2}$}} ;
\draw (0.5,8.5)  node {\footnotesize {$1$}} ;
\draw (1.5,9.5)  node {\footnotesize {\color{redi}$c$}} ;
\draw (2.5,8.5)  node {\footnotesize {\color{darkblue}$0$}} ;
\draw (9.5,8.5)  node {\footnotesize {\color{darkblue}$0$}} ;
\draw (2.5,9.5)  node {\footnotesize {\color{redi}$\ast$}} ;
\draw (9.5,9.5)  node {\footnotesize {\color{redi}$\ast$}} ;
\draw (5.8,9.4)  node { {\color{darkblue}$\cdots$}} ;
\draw (0.5,7.5)  node {\footnotesize {\color{darkblue}$0$}} ;
\draw (0.5,0.5)  node {\footnotesize {\color{darkblue}$0$}} ;
\draw (1.5,7.5)  node {\footnotesize {\color{redi}$\ast$}} ;
\draw (1.5,0.5)  node {\footnotesize {\color{redi}$\ast$}} ;
\draw (5.8,8.4)  node { {\color{darkblue}$\cdots$}} ;
\draw (0.5,4.5)  node { {\color{darkblue}$\vdots$}} ;
\draw (1.5,4.5)  node { {\color{darkblue}$\vdots$}} ;
\end{tikzpicture}
\caption{Elements of $f+\gt g_e$.}   \label{Pic1}
\end{center}
\end{figure}
 Further, 
${^e\!}\Delta_{2i{+}2}$ is a sum $e\Delta_{2i}'+H_i$, 
where $\Delta_{2i}'\in\gS(\gt g')$. Choosing $v=(1,0,\ldots,0)^t$, one readily 
sees that $\psi_v(H_i)={^{e'}\!}\Delta_{2i}'$. This concludes the proof.  
\end{proof}

\begin{rmk}
We have a nice matryoshka-like structure. 
Starting from $\gt g_e$ with $\gt g=\gt{sp}_{2n{+}2}$ and restricting the symmetric invariants to the hyperplane $e=0$ one obtains the  
symmetric invariants of the semi-direct product 
$\gt{sp}_{2n}{\ltimes}\bbk^{2n}$. By passing to the stabiliser of a generic  point 
$x\in V^*$ with $V=\bbk^{2n}$, one comes back to $(\gt{sp}_{2n'})_{e'}$ with $n'=n{-}1$. 
And so on. 
\end{rmk}

Suppose now that $e\in\gt g$ is given by the partition $(2^m,1^{2n})$, $\gt g=\gt{sp}_{2m{+}2n}$. 
Then $\gt g_e=(\gt{so}_m{\oplus}\gt{sp}_{2n}){\ltimes}(\bbk^m{\otimes}\bbk^{2n}\oplus \gS^2\bbk^m)$ and 
the nilpotent radical of $\gt g_e$ is two-step nilpotent. Suppose that $m$ is odd. 
Set $Y:=\Ann(\gS^2\bbk^m)\subset \gt g_e^*$ and let $\tilde H_i$ be the restriction to 
$Y$ of ${^e\!}\Delta_k$ with $k=3m{+}2i{-}1$. 

\begin{lm}\label{m-odd} For $1\le i\le \left(n-\frac{m{-}1}{2}\right)$, we have $\tilde H_i\in\gS(\gt s)^{\gt s}$, where $\gt s=\gt{sp}_{2n}{\ltimes}m\bbk^{2n}$. 
\end{lm}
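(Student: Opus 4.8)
My plan is to peel off the top layer of the nilradical of $\gt g_e$, pass to a quotient, apply Lemma~\ref{lm-sub}, and finish by a matrix computation in the spirit of Lemma~\ref{m1}. Write $\n_1=\gS^2\bbk^m$ and $\gt p=\gt g_e/\n_1=(\gt{so}_m{\oplus}\gt{sp}_{2n}){\ltimes}(\bbk^m{\otimes}\bbk^{2n})$, so that $\gt g_e=\gt p{\oplus}\n_1$ as a vector space and $Y=\Ann(\n_1)$ is naturally identified with $\gt p^*$. Since the nilradical of $\gt g_e$ is $2$-step with $[\bbk^m{\otimes}\bbk^{2n},\bbk^m{\otimes}\bbk^{2n}]=\n_1$ and $[\bbk^m{\otimes}\bbk^{2n},\n_1]=0$, the subspace $\n_1$ is an abelian ideal of $\gt g_e$, and, as $\gS^2\bbk^m$ only involves the $\bbk^m$-factor, both $\gt{sp}_{2n}$ and $\bbk^m{\otimes}\bbk^{2n}$ act trivially on it. Hence $Y=\Ann(\n_1)$ is stable under the coadjoint action of $\gt g_e$, that action is trivial on $\n_1$ (so it factors through $\gt p$), and restriction to $Y$ sends $\gS(\gt g_e)^{\gt g_e}$ into $\bbk[Y]^{\gt g_e}=\bbk[\gt p^*]^{\gt p}=\gS(\gt p)^{\gt p}$. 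In particular $\tilde H_i\in\gS(\gt p)^{\gt p}$ for every $i$.

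Now $\gt s=\gt{sp}_{2n}{\ltimes}(\bbk^m{\otimes}\bbk^{2n})$ is an ideal of $\gt p$ and $\gt p=\gt{so}_m{\ltimes}\gt s$, which is exactly the situation of Lemma~\ref{lm-sub} with $V=m\bbk^{2n}$, $Q=\Sp_{2n}\lhd\tilde Q=\SO_m{\times}\Sp_{2n}$, $\tilde\q=\gt{so}_m{\oplus}\gt{sp}_{2n}$ and $\tilde{\gt s}=\gt p$. Since $m$ is odd, $\SO_m$ is connected, so $\tilde S=\tilde Q{\ltimes}\exp(V)$ is connected and $\gS(\tilde{\gt s})^{\tilde S}=\gS(\gt p)^{\gt p}\ni\tilde H_i$. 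By Lemma~\ref{lm-sub}, once we verify that for generic $x\in V^*$ the restriction of $\tilde H_i$ to $\tilde\q^*{+}x$ lies in $\gS((\gt{sp}_{2n})_x)$, we get $\tilde H_i\in\gS(\gt s)$, and then, by the first assertion of that lemma, $\tilde H_i\in\gS(\gt s)^{\tilde S}\subseteq\gS(\gt s)^{S}=\gS(\gt s)^{\gt s}$ (the group $S=\Sp_{2n}{\ltimes}\exp(V)$ being connected). By \cite[Prop.~2.7]{Y16} this restriction already lies in $\gS(\tilde\q_x)$, where $\tilde\q_x=(\gt{so}_m{\oplus}\gt{sp}_{2n})_x$, since $\tilde H_i$ is $\exp(V)$-invariant; so the real content is that it does not involve the ``$\gt{so}_m$-part'' of $\tilde\q_x$. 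Thus everything reduces to this single restriction claim.

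That claim I would prove by an explicit computation in the matrix model of \cite{ppy}, parallel to the proof of Lemma~\ref{m1} (which is the case $m=1$, where $\tilde\q=\q$, $Y$ is the hyperplane $e=0$, and $3m{+}2i{-}1=2i{+}2$). For a convenient generic $x$, the image of $x$ is an $m$-dimensional subspace $W\subseteq\bbk^{2n}$ whose induced form has rank $m-1$ (this is where $m$ odd is used), and $(\gt{sp}_{2n})_x=\gt{sp}_{2N{-}2}{\ltimes}\gt{heis}_{N{-}1}=(\gt{sp}_{2N})_{e'}$ with $N=n-\tfrac{m-1}{2}$ and $e'$ a minimal nilpotent element of $\gt{sp}_{2N}$. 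Using an $\gt{sl}_2$-triple $\{e,h,f\}$ and the block description of $f{+}\gt g_e$ (cf.\ Figure~\ref{Pic1}), one writes $\Delta_{3m+2i-1}$ along $f{+}(\tilde\q^*{+}x)\subseteq f{+}Y$ and reads off its top $f$-component. The bound $1\le i\le N$, i.e.\ $3m{+}1\le 3m{+}2i{-}1\le 2m{+}2n$ with even value, is precisely what confines the $\gt{so}_m$-coordinates and a complement of $(\gt{sp}_{2n})_x$ inside $\gt{sp}_{2n}$ to $f$-components of strictly smaller degree, so that they drop out of the restriction; I expect the surviving term to be a nonzero multiple of ${}^{e'}\!\Delta_{2i}$ under the identification $(\gt{sp}_{2n})_x=(\gt{sp}_{2N})_{e'}$ --- the same matryoshka pattern already recorded after Lemma~\ref{m1} --- which in particular yields membership in $\gS((\gt{sp}_{2n})_x)$. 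The structural points (coadjoint-stability of $Y$, $\gt s\lhd\gt p$, the connectedness statements) are routine; the genuine obstacle is this matrix bookkeeping: pinning down the $f$-degree of the leading component of $\Delta_{3m+2i-1}$ on the slice and checking that the unwanted coordinates sit strictly below it, while dealing with the degenerate restriction of the symplectic form to $W$.
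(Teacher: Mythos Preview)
Your plan is sound and would work, but it is more circuitous than the paper's argument, and the detour through Lemma~\ref{lm-sub} is unnecessary. The paper observes directly, from the block picture of $f+\gt g_e$ (Figure~\ref{Pic2}) together with the fact that the $f$-degree of $\Delta_k$ equals $m$ for even $k\ge 2m$ (a result quoted from \cite{ppy}), that the $\gt{so}_m$-coordinates $A$ simply cannot occur in the top $f$-component ${}^e\Delta_k$: once all $m$ entries of the $I_m$-block are used in a minor expansion, both $A$-blocks have their rows or columns already consumed. Hence ${}^e\Delta_k\in\gS(\gt{sp}_{2n}\oplus V\oplus\gS^2\bbk^m)$ globally, and setting $C=0$ gives $\tilde H_i\in\gS(\gt s)$ at once. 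The $\gt s$-invariance follows from $\gt p$-invariance exactly as in your first paragraph.

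What you propose to verify via Lemma~\ref{lm-sub}, namely that $\tilde H_i|_{\tilde\q^*+x}$ lands in $\gS(\q_x)$, is in fact a \emph{consequence} of this global vanishing of $A$ in ${}^e\Delta_k$ (combined with \cite[Prop.~2.7]{Y16}); so your ``genuine obstacle'' (the matrix bookkeeping) is the same computation the paper does, only you then feed it into Lemma~\ref{lm-sub} rather than read off $\tilde H_i\in\gS(\gt s)$ directly. Note also that the range $1\le i\le N$ plays no role in eliminating the $\gt{so}_m$-variables---that elimination holds for all even $k\ge 2m$---so your sentence attributing it to the bound is not quite right. Your stronger expectation that the restriction equals (up to scalar) ${}^{e'}\!\Delta_{2i}$ is true and is used later in the proof of Theorem~\ref{thm-sp}, but it is not needed for Lemma~\ref{m-odd} itself.
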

\begin{proof}
By the construction, each $\tilde H_i$ is $\gt g_e$-invariant.   Note that $\gt  g_e$ acts on 
$Y$ as the semi-direct product $(\gt{so}_m{\oplus}\gt{sp}_{2n}){\ltimes}\bbk^m{\otimes}\bbk^{2n}$. 
For each even $k$ with $k\ge 2m$, the $f$-degree of $\Delta_k$ is $m$ \cite{ppy}. For the corresponding 
$\tilde H_i$, this means that $\tilde H_i\in\gS(\gt s)$, see also Figure~\ref{Pic2}, where $C\in \gS^2\bbk^m$. 
\begin{figure}[htb]
\begin{center}
\begin{tikzpicture}[scale= .6]
\draw (0,0)  rectangle (10,10);
\path[draw]  (2,0) -- (2,10); 
\path[draw]  (4,0) -- (4,10); 
\path[draw]  (0,8) -- (10,8);
\path[draw]  (0,6) -- (10,6); 

\path[fill=gray!40]  (2,0) -- (4,0) -- (4,6) -- (2,6)--cycle ;
\path[fill=gray!40]  (4,8) -- (10,8) -- (10,10) -- (4,10)--cycle ;
\draw (7.5,3)  node {\large{\color{redi}$\mathfrak{sp}_{2n}$}} ;

\draw (1,9)  node {\large {\color{redi}$A$}} ;
\draw (1,7)  node {\large $I_m$};
\draw (3,9)  node {\large {\color{redi}$C$}} ;
\draw (3,7)  node {\large {\color{redi}$A$}} ;

\draw (9.5,8.5)  node {\footnotesize {\color{redi}$\ast$}} ;
\draw (9.5,9.5)  node {\footnotesize {\color{redi}$\ast$}} ;
\draw (4.5,8.5)  node {\footnotesize {\color{redi}$\ast$}} ;
\draw (4.5,9.5)  node {\footnotesize {\color{redi}$\ast$}} ;
\draw[dashed,darkblue]  (5.5,9.5) -- (8.7,9.5) ;
\draw[dashed,darkblue]  (4.5,9) -- (9.7,9) ;
\draw[dashed,darkblue]  (5.5,8.5) -- (8.7,8.5) ;  

\draw (9.5,6.5)  node {\footnotesize {\color{darkblue}$0$}} ;
\draw (9.5,7.5)  node {\footnotesize {\color{darkblue}$0$}} ;
\draw (4.5,6.5)  node {\footnotesize {\color{darkblue}$0$}} ;
\draw (4.5,7.5)  node {\footnotesize {\color{darkblue}$0$}} ;
\draw[dashed,darkblue]  (5.5,7.5) -- (8.7,7.5) ;
\draw[dashed,darkblue]  (4.5,7) -- (9.7,7) ;
\draw[dashed,darkblue]  (5.5,6.5) -- (8.7,6.5) ;  

\draw (0.5,5.5)  node {\footnotesize {\color{darkblue}$0$}} ;  %
\draw (1.5,5.5)  node {\footnotesize {\color{darkblue}$0$}} ;  
\draw (0.5,0.5)  node {\footnotesize {\color{darkblue}$0$}} ;   
\draw (1.5,0.5)  node {\footnotesize {\color{darkblue}$0$}} ;  
\draw[dashed,darkblue]  (0.5,1.1) -- (0.5, 4.9) ;
\draw[dashed,darkblue]  (1, 0.5) -- (1, 5.5) ;
\draw[dashed,darkblue]  (1.5,1.1) -- (1.5, 4.9) ;   

\draw (2.5,5.5)  node {\footnotesize {\color{redi}$\ast$}} ;  %
\draw (3.5,5.5)  node {\footnotesize {\color{redi}$\ast$}} ;  
\draw (2.5,0.5)  node {\footnotesize {\color{redi}$\ast$}} ;   
\draw (3.5,0.5)  node {\footnotesize {\color{redi}$\ast$}} ;  
\draw[dashed,darkblue]  (2.5,1.1) -- (2.5, 4.9) ;
\draw[dashed,darkblue]  (3, 0.5) -- (3, 5.5) ;
\draw[dashed,darkblue]  (3.5,1.1) -- (3.5, 4.9) ;   

\end{tikzpicture}
\caption{Elements of $f+\gt g_e\subset\gt{sp}_{2m{+}2n}$.}   \label{Pic2}
\end{center}
\end{figure}
\end{proof}

\begin{thm}\label{thm-sp}
All semi-direct products associated with pairs listed in Table~\ref{table-sp} have ({\sf FA}). 
\end{thm}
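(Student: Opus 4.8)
The plan is to go through the six families of Table~\ref{table-sp} one by one, in each case exhibiting $\ind\gt s$ algebraically independent symmetric invariants of $\gt s$ and then invoking one of the criteria of Section~\ref{sect:kos-th}, most often Theorem~\ref{thm-codim2}. For the family $\gt s=\gt{sp}_{2n}{\ltimes}m\bbk^{2n}$ of item~1 I would treat $m$ odd and $m$ even separately. When $m$ is odd, Lemma~\ref{m-odd} (and Lemma~\ref{m1} when $m=1$) already yields $n-\frac{m-1}{2}$ bi-homogeneous invariants $\tilde H_i\in\gS(\gt s)^{\gt s}$; together with the $\binom{m}{2}$ quadratic pairings $\langle v_i,v_j\rangle$ generating $\bbk[V^*]^G$ these are $\ind\gt s$ functions, and it remains to check (a) their algebraic independence, which holds because the $\tilde H_i$, under the isomorphism $\bbk[\gt g^*{+}x]^{G_x{\ltimes}\exp(V)}\cong\gS(\gt g_x)$, restrict to the basic invariants of the reductive part $\gt{sp}_{2(n-l)}$ of the generic stabiliser ($m=2l-1$); (b) that $\gt s$ has the ``codim--2'' property, via the two-part criterion recalled before Table~\ref{table-sp} (the generic stabiliser $\gt{sp}_{2(n-l)}{\ltimes}\gt{heis}_{n-l}$, a centraliser of the kind treated in \cite{ppy}, has ``codim--2'', and the index jump along each $G$-stable divisor is controlled by the Ra\"is-type formula); and (c) that the degrees of the $\tilde H_i$, read off from Figure~\ref{Pic2}, add up with $2\binom{m}{2}$ to $\boldsymbol{b}(\gt s)$; then Theorem~\ref{thm-codim2} gives ({\sf FA}). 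When $m=2l$ is even the generic stabiliser $\gt{sp}_{2(n-l)}$ is reductive, hence $(G{:}V^*)$ is stable, $S$ has no proper semi-invariants, and $\psi_x$ is onto $\gS(\gt{sp}_{2(n-l)})^{\Sp_{2(n-l)}}$ by \cite[Theorem~2.8]{Y16}; here I would realise $\tilde{\gt s}=(\gt{sp}_{2n}{\oplus}\gt{sp}_{2l}){\ltimes}(\bbk^{2n}{\otimes}\bbk^{2l})$ as the $\Z_2$-contraction of $\gt{sp}_{2(n+l)}$ attached to the symmetric pair $(\gt{sp}_{2(n+l)},\gt{sp}_{2n}{\oplus}\gt{sp}_{2l})$, recall that the highest-$\gt g_1$ components $H_i^\bullet$ of the basic $\gt{sp}_{2(n+l)}$-invariants lie in $\gS(\tilde{\gt s})^{\tilde S}$ (cf.\ the last paragraph of Section~\ref{sect:kos-th} and \cite{coadj}), pull the ones restricting into $\gS(\gt g_x)=\gS(\gt{sp}_{2(n-l)})$ down to $\gS(\gt s)$ by Lemma~\ref{lm-sub} (using $\tilde G_x\cap G=G_x$), and again close with Theorem~\ref{thm-codim2}, the ``codim--2'' property descending from $\tilde{\gt s}$.

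For item~2, $(\GR{C}{n},\vp_2)$, I would observe that $\dim V_{\vp_2}=2n^2-n-1=\dim\gt{sl}_{2n}-\dim\gt{sp}_{2n}$ and that $V_{\vp_2}$ is exactly the isotropy representation of the symmetric pair $(\gt{sl}_{2n},\gt{sp}_{2n})$, so $\gt s=\gt{sp}_{2n}{\ltimes}V_{\vp_2}$ is the $\Z_2$-contraction of $\gt{sl}_{2n}$; this contraction is good by \cite[Theorem~4.5]{contr}, which gives ({\sf FA}) with $n-1$ generators of degrees $2,3,\dots,n$ and $n$ further generators of mixed bidegree. For item~3, $(\GR{C}{n},\vp_1{+}\vp_2)$, the standard summand $V_{\vp_1}=\bbk^{2n}$ carries a dense $G$-orbit whose isotropy group is $H=\Sp_{2n-2}{\ltimes}\exp(\gt{heis}_{n-1})$, i.e.\ $\gt h=(\gt{sp}_{2n})_e$ for a minimal nilpotent $e$; the rank-zero reduction along $V_{\vp_1}$ then identifies $\gS(\gt s)^{\gt s}$ with the symmetric invariants of $\gt h{\ltimes}(V_{\vp_2}|_{\gt h})$. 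From there I would construct, {\sl ad hoc} in the spirit of Lemmas~\ref{m1} and~\ref{m-odd}, the $n$ requisite invariants of $\gt g$-degree one (their images under $\psi_x$ being the $n$ coordinate functions on $\GR{U}{n}^*$), combine them with the $n-1$ generators of $\bbk[V^*]^G$, and apply Theorem~\ref{thm-codim2} once the ``codim--2'' property and the equality $\sum\deg=\boldsymbol{b}(\gt s)=2n^2+2n-1$ are in place.

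The three $\GR{C}{3}$ entries are small enough to dispatch by the ready-made lemmas. Item~5, $(\GR{C}{3},\vp_1{+}\vp_3)$, has generic stabiliser of type $\GR{A}{1}$ and polynomial $\bbk[V^*]^G$ of rank $2$, so it is covered by Example~\ref{ex-A1}. Item~4, $(\GR{C}{3},\vp_3)$, is of rank one with connected generic isotropy group of type $\GR{A}{2}$, hence falls under Lemma~\ref{l-A2}: one has only to check, for a generic point $y$ of each $G$-stable divisor $D\subset V^*$ (in practice only $\{F=0\}$ is at issue), that $\dim\gS^2(\gt g_y)^{G_y}=\dim\gS^3(\gt g_y)^{G_y}=1$ and that the two invariants are algebraically independent, $\gt g_y$ being computed by the {\sl ad hoc} methods indicated before Table~\ref{table-sp}. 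Item~6, $(\GR{C}{3},2\vp_2)$, has the reductive one-dimensional generic stabiliser $\te_1$, so $(G{:}V^*)$ is stable and the restriction map onto $\gS(\te_1)$ is surjective by \cite[Theorem~2.8]{Y16}; a preimage $H$ of its degree-one generator is an invariant of $\gt g$-degree one (concretely a moment-map invariant, since $2V_{\vp_2}$ is a symplectic $G$-module), and together with the $8$ generators of $\bbk[V^*]^G$ it passes through Theorem~\ref{thm-codim2} after the routine ``codim--2'' and degree checks.

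The main obstacle is, as throughout Section~\ref{sect:so}, always of the same nature: for a generic point $y$ of an arbitrary $G$-stable divisor $D\subset V^*$ one must pin down $\gt g_y$ and its isotropy group $G_y$ and then verify that $\gS(\gt g_y)^{G_y}$ harbours no unexpected low-degree generator that would upset the degree balance required by Theorem~\ref{thm-codim2}. This is what drives the ``codim--2'' bookkeeping and explicit degree counting for items~1 and~3, the divisor analysis feeding Lemma~\ref{l-A2} for item~4, and the explicit degree-one lift for item~6. Once these routine but delicate verifications are carried out, the general results of Section~\ref{sect:kos-th} together with Lemmas~\ref{m1} and~\ref{m-odd} assemble into the theorem.
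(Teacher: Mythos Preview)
Your overall strategy matches the paper for items~2, 4, and 5, but there is a genuine gap in your treatment of item~1 when $m$ is even. You propose to pull the invariants $\Delta_k^\bullet$ (for $2m<k\le 2n+m$) from the $\Z_2$-contraction $\tilde{\gt s}$ down to $\gS(\gt s)$ via Lemma~\ref{lm-sub} and then close with Theorem~\ref{thm-codim2}. The pulling-down works, but Theorem~\ref{thm-codim2} does \emph{not}: with $m=2l$ the degree sum of your proposed generators is
\[
2\binom{m}{2}+\sum_{j=1}^{n-l}(2m+2j)=\boldsymbol{b}(\gt s)-m,
\]
not $\boldsymbol{b}(\gt s)$. (For $m=n=2$: one quadratic pairing plus $\Delta_6^\bullet$ give $2+6=8$, while $\boldsymbol{b}(\gt s)=10$.) In fact $\gt s$ fails {\it\bfseries KRC} here, so no choice of generators will rescue Theorem~\ref{thm-codim2}. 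The paper instead invokes \cite[Lemma~3.5({\sf ii})]{Y16}: it checks that the $\Delta_k^\bullet$ stay algebraically independent over $\bbk[D]^G$ on $\gt g\times D$ for every $G$-invariant divisor $D\subset V^*$, the key observation being that either $\Sp_m{\cdot}D$ is dense in $V^*$ or $D$ is already $\tilde G$-stable, and in the latter case a minimal relation must have $\tilde G$-invariant coefficients.

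For item~3 your rank-zero reduction along $V_{\vp_1}$ is correct in principle, but the resulting algebra $\gt g_v{\ltimes}(V_{\vp_2}|_{\gt g_v})$ with $\gt g_v=\gt{sp}_{2n-2}{\ltimes}\gt{heis}_{n-1}$ is complicated, and ``construct, ad hoc'' is not a proof. The paper takes a different and much more explicit route: it starts from the bi-homogeneous generators ${\bf h}_i\in(\gS^2(\gt g)\otimes\gS(V_{\vp_2}))^G$ of item~2 and pushes them through the $G$-equivariant map $\tilde\iota\!:\gS^2(\gt g)\hookrightarrow\gt g\otimes\gS^2(V_{\vp_1})$ arising from the unique copy of $\gt g$ inside $\gS^2(\bbk^{2n})$; this produces explicit $H_i$ of $\gt g$-degree~$1$, whose $\exp(V)$-invariance is verified by restricting to $\gt g\oplus V_{\vp_1}^*+v$ for generic $v\in V_{\vp_2}^*$ and recognising the answer as $\sum_j\alpha_{i,j}\det(\xi_j\,|\,A_j\xi_j)$. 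For item~6 the paper simply cites \cite[Appendix~A]{md-ko}; your direct argument is plausible but would require the same degree-sum and ``codim--2'' verifications that you have not carried out.
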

\begin{proof}
We begin with Item~1. \\ \indent
{\sf Suppose that  $m$ is even.} 
Set $\tilde G:=\Sp_{2n}{\times}\Sp_m$ and $\tilde S:=\tilde G{\ltimes}\exp(V)$.
Then $G\lhd \tilde G$ and $S\lhd \tilde S$. 
The Lie algebra $\tilde{\gt s}=\Lie\tilde S$ is the $\Z_2$-contraction of 
$\gt{sp}_{2n{+}m}$ related to the symmetric pair $(\gt{sp}_{2n{+}m},\gt{sp}_{2n}{\oplus}\gt{sp}_m)$. 
Let $\Delta_k\in\gS(\gt{sp}_{2n{+}m})$ be the sum of   
the  principal $k$-minors and let $\Delta_k^\bullet$ be the highest $V$-component of $\Delta_k$. 
The elements $\Delta_k^\bullet$ with even $k$, $2m<k\le 2n{+}m$, belong  to  a set  of the  
algebraically independent generators of $\gS(\tilde{\gt s})^{\tilde{\gt s}}$, see 
\cite[Theorem~4.5]{contr}.
 For a generic point $x\in V^*$, their restrictions 
$\Delta_k^\bullet|_{\tilde{\gt g}+x}$ form a generating set for the symmetric invariants of 
$(\gt{sp}_{2n})_x=\gt{sp}_{2n{-}m}$.  Hence $\Delta_k^\bullet\in \gS(\gt s)^S$ by 
Lemma~\ref{lm-sub}. 
According to \cite[Lemma~3.5({\sf ii})]{Y16}, these elements
$\Delta_k^\bullet$ (freely) generate $\gS(\gt s)^\gt s$ over $\bbk[V^*]^G$ if and only if their restrictions to $\gt g\times D$ are algebraically independent over $\bbk[D]^G$ for any $G$-invariant divisor 
$D\subset V^*$.  

In case $\Sp_{m}{\cdot}D$ is open in $V^*$, the restrictions of the elements 
$\Delta_k^\bullet$ to $\gt g+y$ are algebraically independent for a generic point $y\in D$. If 
$\Sp_{m}{\cdot}D$ is not open in $V^*$, then $D$ is $\tilde G$-invariant and the restrictions of $\Delta_k^\bullet$ to $\gt g\times D$ are algebraically independent over $\bbk[D]^{\tilde G}$ by   \cite[Lemma~3.5({\sf ii})]{Y16}
applied to $\tilde{\gt s}$.  If there is a non-trivial relation among these restrictions and 
not all the coefficients are $\tilde G$-invariant, then   
one can apply an element of $\tilde G$ to the relation and by taking a suitable linear combination obtain
a smaller non-trivial one.   Thus, 
a minimal non-trivial relation among the restrictions  must have 
$\tilde G$-invariant coefficients.  Hence  the restrictions of  $\Delta_k^\bullet$ to $\gt g\times D$  are also algebraically independent over $\bbk[D]^G$.   

{\sf Suppose now that $m$ is odd.} Consider the standard embedding 
$\gt{sl}_{2n}\subset \gt{sl}_{2n}\times \gt{sl}_{m}\subset \gt{sl}_{2n{+}m}$. 
The defining representation of $\Sp_{2n}$ in $\bbk^{2n}$ is self-dual. 
Therefore we can embed $V\simeq V^*$ into $m\bbk^{2n}\oplus m(\bbk^{2n})^*$ diagonally. 
This gives rise to $\gt s^*=\gt g\oplus V\subset \gt{sl}_{2n+m}$. 
Let $\Delta_k\in\gS(\gt{sl}_{2n{+}m})$ be the sum of the principal 
$k$-minors and  $\Delta_k^\bullet$ the 
highest $V$-component of the restriction ${\Delta_k}|_ {\gt s^*}$.   
Note that in case $m=1$, we have  $\Delta_k^\bullet=-H_i$, where $H_i$ is the same as in Lemma~\ref{m1}
and $k=2i{+}1$. For $m\ge 3$, $\Delta_k^\bullet$ is equal to $\pm \tilde H_i$, where $\tilde H_i$ is the same as in Lemma~\ref{m-odd} and $k=2m{+}2i{-}1$. Suppose that $m\ge 3$. 

Fix a $G$-stable  decomposition $V=V_1{\oplus}V_2$ with $V_1=\bbk^{2n}$.  Then there is the corresponding  decomposition 
$V^*=V_1^*\oplus V_2^*$. Choose a generic $v\in V_2^*$ and consider the usual restriction homomorphism 
$$
\psi_v\!: \bbk[\gt s^*]^S\to \bbk[\gt g{\oplus}V_1^*{+}v]^{G_v{\ltimes}\exp(V)}\simeq\gS(\gt g_v{\ltimes}V_1)^{G_v{\ltimes}\exp(V_1)}.
$$
Here $G_v=\Sp_{2n{-}m{+}1}$. Setting $n':=n-\frac{m-1}{2}$, we obtain 
$\gt g_v{\ltimes}V_1=(\gt{sp}_{2n'}{\ltimes}\bbk^{2n'})\oplus \bbk^{m{-}1}$. 
If $k=2m{+}2i{-}1$, then the restriction of $\Delta_k^\bullet$ to $\gt g{\oplus}V_1^*+v$ is equal to 
$cH_i$, where $c\in\bbk^{\!^\times}\!$ and $H_i$ is the same symmetric  invariant of 
$\gt{sp}_{2n'}{\ltimes}\bbk^{2n'}$ as in Lemma~\ref{m1}. 
 
The ring $\bbk[V^*]^G$ is freely generated by $\binom{m}{2}$ polynomials $F_j$ of degree $2$. 
We may (and will) assume that the first $m{-}1$ elements $F_j$ lie in 
$V_1{\otimes}V_2$ and that the remaining ones (freely) generate $\bbk[V_2^*]^G$. Then 
$\psi_v(F_j)\in\bbk$ for $j\ge m$ and $\left<\psi_v(F_j)\mid 1\le j\le m{-}1\right>_{\bbk}$ is the 
Abelian direct summand $\bbk^{m{-}1}$ of $\gt g_v{\ltimes}V_1$. We see that 
$F_1,\ldots,F_{m{-}1},\Delta_{2m{+}1}^\bullet ,\ldots,\Delta_{2n{+}m}^\bullet$ are 
algebraically independent over $\bbk[V_2^*]$. Hence 
$$\left\{F_j\mid 1\le j\le \binom{m}{2}\right\}\cup \{\Delta_k^\bullet \mid k \text{ odd, } 2m<k\le 2n{+}m\}$$
is a set of algebraically independent homogeneous invariants. Our goal is to prove that this 
is a generating set. 

There is a big open subset $U\subset V^*$
such that  $G_v$ is a generic isotropy group for $(G{:}V^*)$ for each $v\in U$. Here 
$G_v=(\Sp_{2n'})_e$ with $2n'=2n{-}m{+}1$ and $e\in\gt{sp}_{2n'}$ being a minimal nilpotent element. 
The algebra $\gt g_v$ has the ``codim--2" property by \cite{ppy} and hence 
$\gt s$ has the ``codim--2" property as well. 

Finally we calculate the sum of the degrees of the proposed generators.  
There are $\binom{m}{2}$ invariants of degree $2$, 
the minors $\Delta_k^\bullet$ are of degrees $2m{+}1$, $2m{+}3$, \ldots, $m{+}2n$.
Summing up 
$$
2\binom{m}{2}+\frac{1}{2}\left(n-\frac{m-1}{2}\right)\left(2n+3m+1\right)=\frac{1}{2}\ind\gt s+
n^2+\frac{n}{2}+nm=\frac{\ind\gt s+\dim\gt s}{2}.
$$
Applying Theorem~\ref{thm-codim2}, we can conclude that $\gS(\gt s)^{\gt s}$ is freely generated by the 
polynomials $F_j$ and $\Delta_k^\bullet$. 

Item~2  is a  $\Z_2$-contraction of $\SL_{2n}$, and this contraction is good, see \cite[Theorem~4.5]{contr}.

Item~4 can be covered by Theorem~\ref{V-rank-1} (or Lemma~\ref{l-A2}), this pair $(G,V^*)$ is of rank one. 
There is an open orbit $G{\cdot}y\subset D$, where $D$ stands for the zero set of the generator $F\in\bbk[V^*]^G$.  
A generic isotropy group for  $(G{:}V^*)$ is $\SL_3$,  $G_y$ is connected, and
 $\gt g_y$ is equal to 
$\tri\ltimes\gS^4\bbk^2$, see~\cite{igusa}.   This $\gt g_y$ is a good $\Z_2$-contraction of $\gt{sl}_3$ 
\cite{coadj}. 

Item~5 is covered by Example~\ref{ex-A1}.   

Item~6 is treated in \cite[Appendix~A]{md-ko}, there it is shown that this pair has  ({\sf FA}). 

The final challenge is to describe the symmetric invariants for item~3. A certain similarity with 
item~2 will help. 
Now $V=V_1{\oplus}V_2$ with $V_1=\bbk^{2n}$, $V_2=V_{\varphi_2}$. 
Set $\gt s_2:=\gt g{\ltimes}V_2$ (this is the semi-direct product in line 2). 
According to \cite{contr}, 
$\bbk[\gt s_2^*]^{\gt s_2}=\bbk[V_2^*]^G[{\bf h}_1,\ldots,{\bf h}_n]$, where each ${\bf h}_i$ is bi-homogeneous 
and $\deg_{\gt g}{\bf h}_i=2$.  In other words, ${\bf h}_i\in(\gS^2(\gt g){\otimes}\gS(V_2))^G$. 
In $\gS^2(V_1)$, there is a unique copy of $\gt g$, which gives rise to  embeddings 
$\iota\!: \gS^2(\gt g)\to \gt g{\otimes}\gS^2(V_1)$ and 
$$
\tilde\iota\!: (\gS^2(\gt g){\otimes}\gS(V_2))^G\to  (\gt g{\otimes}\gS^2(V_1){\otimes}\gS(V_2))^G.
$$
Set $H_i:=\tilde\iota({\bf h}_i)$. 

Each $H_i$ is a $G$-invariant by the construction. Next we check that it is also a $V$-invariant. 
Take a generic point $v\in V_2^*$. Then $\gt g_v$ is a direct sum of $n$ copies of $\gt{sl}_2$ and under 
$\gt g_v$ the space $V_1$ decomposes into a direct sum of $n$ copies of $\bbk^2$. 
The restriction of ${\bf h}_i$ to $\gt g+v$ is an element of $\gS^2(\gt g_v)^{\gt g_v}\subset\gS^2(\gt g)\subset\gt g{\otimes}\gt g$. If we regard this restriction as a bi-linear function on $\gt g{\otimes}\gt g$, 
then its value on $(A,B)$ for $A,B\in\gt g$ can be calculated as follows.   
From each matrix we cut the $\gt{sl}_2$ pieces $A_j,B_j$, $1\le j\le n$,
corresponding to the $\gt{sl}_2$ summands of $\gt g_v$ and  take a linear combination 
$\sum \alpha_{i,j} \tr(A_jB_j)$. 
With a slight abuse of notation we set ${\bf h}_i(A,B,v):=\sum \alpha_{i,j} \tr(A_jB_j)$. 

The restriction of $H_i$ to $\g\oplus V_1^*+v$ is an element of $(\gt g\otimes\gS^2(V_1))^{\g_v}$. 
Take $\xi\in V_1^*$. Let $B(\xi)\in\gt g$ be the projection of $\xi^2$ to $\gt g\subset \gS^2(V_1)$. 
Then 
$$
     H_i(A+\xi+v)={\bf h}_i(A,B(\xi),v) .
$$ 
Write $\xi=\xi_1+\ldots+\xi_n$, where each $\xi_j$ lies in its 
$\gt g_v$-stable copy of $\bbk^2$. Then $\xi_j{\otimes}\xi_k$ with $j\ne k$ is orthogonal to 
$\gt g_v\subset\gt g\subset\gS^2(V_1)$.  Furthermore, $\tr(A_jB(\xi)_j)=\det(\xi_j|A_j\xi_j)$. 
Therefore 
$$
     H_i(A+\xi + v)=\sum \alpha_{i,j} \det(\xi_j|A_j\xi_j) .
$$ 
We see that 
$H_i|_{\gt g{\oplus}V_1^*+v}$ lies in $\gS(\gt g_v{\ltimes}V_1)$ and therefore is a $V_2$-invariant \cite{Y16}. 
Moreover, this restriction is a $V_1$-invariant by \cite{Y}. Since these assertions  hold for a generic vector $v\in V_2^*$, each $H_i$ is a $V$-invariant. 
From the case of $\gt s_2$, we know that the matrix $(\alpha_{i,j})$ is non-degenerate. 
Hence the invariants $H_i$ are algebraically independent over $\bbk(V_2^*)$. 
Note that $\bbk[V_2^*]^G=\bbk[V^*]^G$.  Further, $\deg H_i=\deg{\bf h}_i+1$. If we sum over all 
(suggested) generators, then the  result is $(\dim\gt s_2+\ind\gt s_2)/2+n$ and this is 
exactly $(\dim\gt s+\ind\gt s)/2$. 

In order to use Theorem~\ref{thm-codim2}, it remains to prove that $\gt s$ has the 
``codim--2" property. Let $D\subset V_2^*$ be  a $G$-invariant divisor
and let $y\in D$ be a generic point. If $G_y \ne (\SL_2)^n$, then  $G_y=(\SL_2)^{n{-}2}\times (\SL_2{\ltimes}\exp(\gS^2\bbk^2))$. 
In particular, $\dim(G{\cdot}y)=\dim V_2-(n{-}1)$. 
If $\q$ is the Lie algebra of $Q=\SL_2{\ltimes}\exp(\gS^2\bbk^2)$, then   $\q=\gt{sl}_2{\ltimes}\gt{sl}_2^{\sf ab}$.
We have 
$$
  G_y{\ltimes}\exp(V_1)=(\SL_2{\ltimes}\exp(\bbk^2))^{n{-}2}\times (Q{\ltimes}\exp({\bbk}^4))
$$ 
and $\q{\ltimes}\bbk^4=\gt{sl}_2{\ltimes}((\bbk^2{\oplus}\gS^2\bbk^2)\oplus\bbk^2)$ 
with the unique  non-zero commutator $[\bbk^2,\gS^2\bbk^2]=\bbk^2$. 
An easy computation shows that 
$\ind(\q{\ltimes}\bbk^4)=2$. Thereby $\ind(\gt g_y{\ltimes}V_1)=n$ and hence $\gt g{\oplus} V_1^* {\times} D\cap \gt s^*_{\sf reg}\ne \varnothing$, cf. \cite[Eq.~(3${\cdot}$2)]{Y16}. The Lie algebra $\gt s$ does have the ``codim--2" property. 
\end{proof}

{\bf Acknowledgements.}  This work was completed while first author was a guest of the Max-Planck-Institut f\"ur Mathematik (Bonn).

\end{document}